\documentclass[11pt]{article}
\usepackage[a4paper,margin=2cm]{geometry}

\RequirePackage{fix-cm}

\usepackage{amsthm}
\usepackage{color,xcolor}
\usepackage{amsmath,amssymb,bm}
\usepackage{graphicx,graphics}
\definecolor{refgreen}{rgb}{0,0.5,0}
\usepackage[utf8]{inputenc}
\usepackage{textcomp} 
\usepackage{mathtools}
\usepackage[numbers]{natbib}

\def   \d {\hspace{1.5pt}\mathrm{d}}

\usepackage{hyperref}
\usepackage{ulem} 

\usepackage{cleveref}

\usepackage{algorithm} 

\usepackage{diffcoeff}
\usepackage{bm} 

\definecolor{refblue}{rgb}{0,0,0.75}
\definecolor{refblueb}{rgb}{0,0,1}
\definecolor{refgreen}{rgb}{0.13,0.55,0.13}
\definecolor{refred}{rgb}{1,0,0}

\hypersetup{
	colorlinks   = true, 
	urlcolor     = refblue, 
	linkcolor    = refblueb, 
	citecolor   = refgreen 
}

\usepackage{stmaryrd} 

\usepackage{longtable}

\newcommand{\R}{\mathbb{R}}
\newcommand{\NaGa}{\nabla_\Gamma}

\DeclareMathOperator{\osc}{osc}
\DeclareMathOperator{\TOL}{TOL}

\newcommand{\IntRef}{\mathcal{I}_{\text{ref}}^n} 

\newcommand{\Rcoarse}{\mathcal{R}_{\textnormal{c}}}
\newcommand{\Ga}{\varGamma}
\newcommand{\baruhtaulift}{\overline{\vphantom{\bar u}u_{h,\tau}}}
\newcommand{\baruhtau}{\overline{\vphantom{\bar U}U_{h,\tau}}}
\newcommand{\comesh}{n-1 \oplus n} 

\newcommand{\partialm}{\partial^\bullet} 
\newcommand{\partialmh}{\partial_h^\bullet} 

\usepackage{graphicx}
\usepackage{epstopdf}

\usepackage[autostyle=true]{csquotes}

\graphicspath{ {./Figures/} }
\usepackage{graphicx}
\usepackage{subcaption}
\usepackage{todonotes}

\usepackage{pgfplots}

\newtheorem{theorem}{Theorem}
\newtheorem{proposition}[theorem]{Proposition}%
\newtheorem{remark}{Remark}%

\numberwithin{equation}{section}

\renewcommand{\nu}{\textnormal{n}}

\makeatletter
\renewcommand{\d}{\mathop{}\!\mathrm{d}}
\makeatother

\begin{document}

\title{A posteriori error estimates for parabolic PDEs on evolving surfaces}

\author{
Michael Lantelme\\
Institute of Mathematics\\
Paderborn University\\
Warburger Str.\ 100\\
33098 Paderborn, Germany\\[1ex]
\texttt{lantelme@math.uni-paderborn.de}
}

\date{\today}

\maketitle

\begin{abstract}
We derive residual-based a posteriori error estimates for parabolic surface PDEs on closed evolving surfaces. The main contribution is to prove efficiency and reliability for the proposed error indicator, which bounds the error quantities globally from above and globally in space and locally in time from below. We extend methods from \cite{KL25} to allow for non-trivial coarsening on evolving surfaces. Multiple numerical experiments are given, which illustrate the asymptotic behaviour of the error and effectiveness of the refinement and coarsening.
\end{abstract}

\noindent\textbf{Keywords:}
evolving surface PDEs; evolving surface FEM; a posteriori error analysis; residual-based error analysis; residual; space--time adaptivity.

\vspace{1em}

\section{Introduction}
This paper covers residual-based a posteriori error estimates for parabolic partial differential equations (PDEs) on closed evolving surfaces. 
In particular efficiency and reliability between the errors and estimators.
This includes that we show that the derived error indicators bound the error globally in space and time from above, up to oscillation. And globally in space and locally in time from below, up to oscillation, high-order geometric, mesh-transfer, and movement related indicators. 
Most of the analysis is done for a model problem, the heat equation on evolving surfaces, however the results are easily extendible to general parabolic surface PDEs.
The discrete approximation is done via evolving linear surface finite element method (ESFEM) in space, where the vertices move along the exact flow, and backwards Euler discretization in time.
We provide a minimal working example for a space--time adaptive algorithm for which a set of numerical tests is done to accompany the main results. 
Additionally we expect the tools, in particular related to coarsening, to generalize to settings with evolving computational domains whose nodes do not coincide with the continuous geometry.

The a posteriori analysis of parabolic PDEs on evolving surfaces poses substantial new challenges compared to the stationary case, due to the interplay between temporal evolution, surface transport, and changing geometry. Based on the insights gained from the stationary setting in \cite{KL25}, we develop new techniques and analytical tools to control the additional terms arising from the surface flow and geometry changes.
The a posteriori error analysis is done, similar to, e.g.\, \cite{Verfuerth2003,Kreuzer12,KL25}, by splitting the residual into different subresiduals. Namely this includes the typical spatial, temporal and oscillation residuals (standard for parabolic PDEs in euclidean domains \cite{Verfuerth2003,Kreuzer12}), the geometric residual (standard for elliptic surface PDEs \cite{Demlow2007,Camacho2014}), and the novel movement residual, the velocity induced temporal residual and some high-order geometric terms resulting from the velocity discretization.
As for the stationary setting it is essential to note that the error-analysis is complicated due to working with the non-conform ESFEM discretization, which effects both the approximation, and the velocity induced by the flow.
The evolution of the domain results in the need to compare functions on different domains due to the flow but also due to adaptivity.
Similar to \cite[Section~3.2]{KL25}, but complicated by the evolution of the domain, we introduce a refinement interpolation to define the full-discrete formulation on a singular mesh.
The refinement interpolant allows us to define two time interpolations of the discrete numerical solution. One of which is continous and can not be unlifted to the discrete surface easily, but is essential for the error analysis. The other is discontinous at the discrete time-levels, but easy to unlift and essential to derive computable indicators.
The indicators which demand novel techniques are:
The movement residual which handles the domain difference resulting from the flow, which was split such that we can measure how the quantities of the full discrete weak formulation behave under domain changes. 
The coarsening residual requires careful mesh generation and its bounds are split into a coarsening and explicit mesh-transfer part.
To define a typical coarsening indicator it is essential to establish a construction which avoids node mismatches under the flow due to time-dependent non-linear lifting combined with arbitrary flow of the surface.
 This results in the need of a careful construction of the refinement interpolation.
Additionally, the interpolation results for the common interpolation operator for surfaces introduced in \cite[Section~3.3]{KL25} are generalized. 

There are various applications for time dependent surface PDEs in multiple fields, including surfactant transport on moving interfaces \cite{BGN_surfactant}, biological growth and tumour modelling \cite{Eyles2019ATM,King2021FreeBP}, phase separation on deforming surfaces \cite{ElliottRanner,ElliottSales2024_CH}, pattern formation on evolving surfaces \cite{Amago}, and geometric flows, e.g.\, mean curvature flow \cite{Huisken1987TheVP,MCF,MCF_surgery}.

Evolving surface PDEs were first introduced by Dziuk and Elliott in \cite{Dziuk2007ESFEM}, based on the surface finite element method introduced by Dziuk \cite{Dziuk1988}.
A good overview about the ESFEM can be found in \cite{Dziuk2013FiniteEM, RannerElliott}. We refer to \cite{BonitoDemlowNochetto} for a general survey on different finite element methods for surfaces. 

Adaptive methods for elliptic PDEs on stationary surfaces were first analysed in \cite{Demlow2007}, the results were later extended \cite{Camacho2014} to derive efficient and reliable $L^2$ and pointwise indicators. The split into geometric and consistency contributions were key to determine efficient bounds. 
An algorithm which explicitly handles the resolution of the geometry to guarantee convergence of the method and shape-regularity of the meshes, which in comparison to the euclidean case does not follow immediately, was later developed in \cite{Bonito2013}.
A posteriori error analysis for parabolic surface PDEs was first developed in \cite{KL25} for stationary PDEs, where they extended and combined the theories for elliptic surface PDEs and parabolic PDEs on euclidean domains, in particular based on the frameworks of \cite{Verfuehrt1996,Verfuerth2003,chen2004feng_adap_flat,Kreuzer12}. 

We note that the present framework yields suboptimal $L^\infty (L^2)$-bounds. To improve this, techniques based on strong stability estimates \cite{eriksson95}, or elliptic reconstruction \cite{MakridakisNochetto2003,ell_reconstruct} appear promising, and our results may provide a useful basis for such an analysis.  

To our knowledge, a posteriori error analysis and adaptivity was not yet studied for \textit{parabolic problems on evolving surfaces} in the literature.

The paper is organized as follows: 
In Section~\ref{ch:surfacePDE}, we introduce the heat equation on evolving surfaces. 
Section~\ref{sec:discretization} recalls the ESFEM framework, derives the semi-discrete formulation, and introduces the full-discrete implicit Euler scheme based on a refinement interpolant. This construction yields two distinct time interpolants of the discrete solution. Additionally we develop a smallest common refinement for evolving surface meshes.
 The main results and the explicit error indicators are stated in Section~\ref{ch:main_result}. 
 Section~\ref{ch:proof} contains their proofs, including: equivalence between the residual and error, a residual decomposition into different subresiduals related to different effects of the PDE-error, and finally reliability bounds, together with efficiency estimates for selected contributions.
 Finally in Section~\ref{sec:numerical_experiments} we propose a space--time adaptive algorithm and numerical experiments.

\section{PDEs and Differential Operators on Surfaces}
\label{ch:surfacePDE}
\subsection{Preliminaries and Notations}
\label{sec:prelim}
We follow the setup and notational convention introduced in \cite{Dziuk2013FiniteEM}. 
Let us consider a closed two-dimensional sufficiently smooth (atleast $C^3$) evolving hypersurface $\Gamma(t)$ . We assume that for each $t \in [0,T]$, the surface $\Gamma(t)$ is described as the zero-level-set of a signed distance function $d(\cdot,t) \colon \mathcal{U}_\epsilon(t) \subset \R ^{3} \rightarrow \R$, where $\mathcal{U}_\epsilon(t)$ is a tubular region around $\Gamma(t)$ with width $\epsilon(t) > 0$.
A scalar function $u(x,t)$ ($x \in \Gamma(t), 0 \leq t \leq T$) has the tangential gradient   
\begin{align*}
	\nabla_{\Gamma(t)} u := \nabla \overline{u} - (\nu \cdot \nabla \overline{u}) \nu = (I-\nu \otimes \nu) \nabla \overline{u} =: P \nabla \overline{u} .
\end{align*} 
Where $\nu = \nu(\cdot,t)$ is the outer normal vector field to $\Gamma(t)$, $\overline{u}$ denotes the extension of $u$ onto $\mathcal{U}_\epsilon(t)$ and $P$ is the tangential projection. The Laplace--Beltrami operator is given by $\Delta_{\Gamma(t)} u = \nabla_{\Gamma(t)} \cdot \nabla_{\Gamma(t)} u$.
We define the space--time manifold $Q_T := \bigcup_{t \in [0,T]} \Gamma(t) \times \{t\}$.
For sufficiently small $\epsilon(t)>0$, every $x \in \mathcal{U}_\epsilon(t)$ admits a unique closest point projection $y(x,t) \in \Gamma(t)$, see \cite{Demlow2007}. From now on assume the projection exists for all times with a $t$-independent $\epsilon$, similar to \cite[Section~5]{Dziuk2013FiniteEM}, and is given by
\begin{align}
	\label{eq:uniquedecom}
	x = y(x,t)+d(x,t) \nu (y(x,t),t) .
\end{align}
The evolution of the surface is assumed to be governed by the flow map $G(\cdot,t):\Gamma^0 \rightarrow \Gamma(t)$ with regularity $G \in C^1([0,T];C^3(\Gamma^0))$, where $\Gamma^0 = \Gamma(0)$ and such that the flow map $G$ is a diffeomorphism from $\Ga^0$ to $\Ga(t)$ for $t \in [0,T]$. 
With a slight abuse of notation, for $t,s \in [0,T]$ we define the flow of points $x \in \Gamma(t)$ from time $t$ to $s$ by writing $G(\cdot,t,s):\Ga(t) \rightarrow \Ga(s)$ defined by $G(x,t,s):=G((G(\cdot,t))^{-1}(x),s)$. 

The associated surface velocity $v(\cdot,t)$ is defined by
\begin{align}
	\label{eq:flow_ode}
	\frac{\partial}{\partial t} G(\cdot,t) = v(G(\cdot,t),t) \quad \forall t \in (0,T] \quad \text{and} \quad G(\cdot,0)=Id.
\end{align}
On evolving surfaces the notion of time derivatives is extended by the material derivative to account for the mass transport
\begin{align*}
	\partialm f = \frac{\partial f}{\partial t} + v \cdot \nabla f.
\end{align*}

We state two commonly used results required for the analysis and discretization via the surface finite elements method. 
The surface variant of Greens formula \cite[Theorem~2.14]{Dziuk2013FiniteEM} reads
\begin{align}
	\label{eq:green_formula}
	\int_\Gamma \NaGa f \cdot \NaGa g = -\int_\Gamma f \cdot \Delta_\Ga g,
\end{align} 
given $f \in H^1(\Ga)$ and $g \in H^2(\Gamma)$. 
Additionally, the Leibniz formula on an evolving surface $\Gamma(t)$, for sufficiently smooth $f$, reads
\begin{align}
	\label{eq:leibniz_formula}
	\frac{\d}{\d t} \int_{\Gamma(t)} f = \int_{\Gamma(t)} \partialm f+ f \nabla_{\Gamma(t)} \cdot v.
\end{align}

\subsection{Heat equation on evolving closed surfaces}
The strong formulation of the surface heat equation with an inhomogeneity $f \in L^2(Q_T)$ and initial condition $u^0 \in L^2(\Gamma^0)$ reads
\begin{equation}
	\label{eq:heat_strong}
	\begin{aligned}
	\partialm u+u \NaGa \cdot v - \Delta_\Gamma u &= f \qquad &&\text{on} \,\,\, \Gamma(t) \quad t \in(0,T], \\
	u(\cdot,0) &= u^0 \qquad &&\text{on} \,\,\, \Gamma^0.
	\end{aligned}
\end{equation}

Utilising Greens formula \eqref{eq:green_formula} we derive the weak formulation. We abbreviate $\int_{\Gamma(t)} fg =: (f,g)_{L^2(\Gamma(t))}$.
The weak formulation reads: find $u \in H^1(Q_T)$ with $u(\cdot,0) = u^0$ such that it satisfies
\begin{align}
	\label{eq:weak_form}
	(\partialm u,\phi)_{L^2(\Gamma(t))} + (\nabla_\Gamma u, \nabla_\Gamma \phi)_{L^2(\Gamma(t))} + (u,\phi \NaGa \cdot v)_{L^2(\Gamma(t))} = (f,\phi)_{L^2(\Gamma(t))},
\end{align}
for almost all $t \in (0,T)$, and $\phi(\cdot, t) \in H ^1 (\Gamma(t))$.

\section{Dicretization}
\label{sec:discretization}
We employ the ESFEM method to derive the semidiscrete formulation. To be able to compare discrete solutions to the exact solution of \eqref{eq:heat_strong} the lift is introduced. Using the implicit Euler method in time we derive a full discrete version of the problem in a suitable form for a posteriori error analysis, which gives rise to the refinement interpolation operator. Utilising the set of full discrete solutions we introduce two affine interpolations, one of which is continous, the other easy to lift. Afterwards bilinear  forms are defined and we state results for the later error analysis. Finally we introduce the smallest common refinement, which is necessary to define coarsening-type error indicators.

\subsection{Semidiscrete ESFEM}
\label{sec:semidiscreteESFEM}
Approximate the evolving surface $\Gamma(t)$ by an evolving discrete surface $\Gamma_h(t)$ such that its vertices $\{X_j(t)\}_{j=1} ^N$, lie on $\Ga(t)$ hence $\Ga_h^n(t)$ interpolates $\Gamma(t)$. The surface $\Gamma_h(t)$ is assumed to be an admissible triangulation for all times (see  \cite{Dziuk2013FiniteEM}), which is smooth in time and given as a finite union of triangles $\mathcal{T}_h (t)$.
The interpolative property requires that the vertices $X_j$ move along the flow $X_j(t) = G(X_j(0),t)$ for all $j = 1,\dots,N$.

The evolving finite element space is given by
\begin{align*}
	S_h(t) = \{W_h \in C^0(\Gamma_h(t)) \mid W_h|_T  \, \text{affine linear} \,\, \forall T \in \mathcal{T}_h (t) \}.
\end{align*} 
The nodal basis functions $\{\Phi_j(t)\}$ span the finite element space, i.e.\ $S_h = \operatorname{span}\{\{\Phi_j(\cdot,t)\}_{j = 1} ^N\}$ for all $t \in [0,T]$. Thus we can write for arbitrary $W_h(\cdot,t) \in S_h(t)$ with nodal values $W_j(t) := W_h(X_j(t),t)$
\begin{equation*}
	W_h(\cdot,t) = \sum_{j=1} ^N W_j(t) \Phi_j(\cdot,t).
\end{equation*} 
This allows us to define a discrete material velocity $V_h$ on $\Gamma_h(t)$ as the interpolation of $v$, and the elementwise discrete material derivative 
$\partialmh W_h$ by 
\begin{align*}
	V_h(\cdot,t) := \sum_{j=1}^N \frac{\d X_j(t)}{\d t}  \Phi_j(\cdot,t), \quad
	\partialmh W_h|_{T(t)} := \bigg(\frac{\partial W_h(\cdot,t)}{\partial t}+ V_h(\cdot,t) \cdot \nabla_{\Gamma_h(t)} W_h(\cdot,t)\bigg)\bigg|_{T(t)}.
\end{align*}
Note that the flow \eqref{eq:flow_ode} imposes $\frac{\d X_j(t)}{\d t} = v(X_j(t),t)$.
By construction the nodal basis functions fulfil the transport property $\partialmh \Phi_j = 0$ on $\Gamma_h(t)$ \cite[Proposition~5.4]{Dziuk2007ESFEM}. Thus $\partialmh{W_h}(\cdot,t) = \sum_{j=1} ^N \frac{\d{W}_j}{\d t} \Phi_j(\cdot,t)$.

We formulate the semidiscrete problem: Given $F_h$, an appropriate approximation of $f$ on the discrete surface, find $U_h(\cdot,t) \in S_h(t)$ with
\begin{align}
	\label{eq:semidiscreteheat}
	\left(\partialmh U_h,\Phi_h \right)_{L^2(\Gamma_h(t))} + (\nabla_{\Gamma_h(t)} U_h, &\nabla_{\Gamma_h(t)} \Phi_h)_{L^2(\Gamma_h(t))} \nonumber \\
	+ (U_h,\Phi_h \nabla_{\Gamma_h(t)} \cdot V_h)_{L^2(\Gamma_h(t))} &= (F_h,\Phi_h)_{L^2(\Gamma_h(t))} \qquad \forall \Phi_h \in S_h (t) .
\end{align}
Note that this is different from the formulation of \cite[Definition~5.6]{Dziuk2007ESFEM}. This is for good reason as we want to avoid working with $\frac{d}{dt} (U_h,\Phi_h)_{L^2(\Gamma_h(t))}$ when discretizing in time, which results in differences of functions at two distinct discrete timesteps, and additionally due to adaptivity, to distinct mesh-connectivities. It is in general non-trivial to determine typical consistency error indicators (see first term of \eqref{eq:indicator - spatial}) if the discrete quantities are defined at distinct times.

\textbf{Lift.} 
The later error analysis requires us to represent both continuous and discrete quantities on a shared domain, the lift operator allows us in particular to represent discrete quantities on the exact surface $\Ga(t)$. We employ the closest point projection \eqref{eq:uniquedecom}, which requires $\Gamma_h(t) \subset \mathcal{U}_\epsilon(t)$ and $C^2$-regularity of $\Gamma(t)$, to uniquely \textit{lift} points and functions  between $\Ga_h(t)$ and $\Ga(t)$. 
For $x \in \Gamma_h(t)$ its lift is the unique solution of \eqref{eq:uniquedecom}, denoted as $x^{\ell(t)} \in \Ga(t)$. Note that the lift introduces a bijective map from $\Ga_h(t)$ to $\Ga(t)$ for all $t$.

Consequently, the \textit{lift of a function} $W_h \colon \Ga_h(t) \rightarrow \R$ onto $\Ga$ is given by $W_h^{\ell(t)} (x^{\ell(t)}) := W_h (x)$. As the lift is bijective we define the \textit{unlift} $w^{-\ell(t)} \colon \Ga_h(t) \to \R$ such that $(w^{-\ell(t)})^{\ell(t)} = w \colon \Ga(t) \to \R$ holds.

The lift of functions enables us to also lift discrete functions $\Phi_h \in S_h(t)$ resulting in $\Phi_h^{\ell(t)} \in S_h^{\ell(t)}(t) := \operatorname{span} \{(\Phi_j^{\ell(t)})_{j=1}^{N}\}$, note that the elements of the lifted discrete mesh are curved triangles, whose union exactly resembles $\Gamma(t)$.

For readability, we often suppress the lift notation. Throughout the paper, quantities denoted by capital letters are understood to be defined on $\Ga_h(t)$, whereas the corresponding  lower-case quantities are understood to be defined on $\Ga(t)$. In particular, the same letter in upper- and lower-case implicitly indicates that the two quantities are related by a (un)lift. For example, we write $X := x^{\ell(t)}$ for vertices and $W := w^{\ell(t)}$ for functions. 

The standard norm equivalence under lifts \cite[Lemma~3]{Dziuk1988} holds for all fixed times and for any $w \in H^1(\Gamma(t))$,
\begin{equation}
\label{eq:norm_equiv}
	\begin{gathered}
		\frac{1}{c} \|W\|_{L^2(\Ga_h(t))} \leq \|w\|_{L^2 (\Ga(t))} \leq c \|W\|_{L^2(\Ga_h(t))} , \\
		\frac{1}{c} |W|_{H^1(\Ga_h(t))} \leq |w|_{H^1(\Ga(t))} \leq c |W|_{H^1(\Ga_h(t))} ,
	\end{gathered}
\end{equation}
where we used the typical seminorm notational convention $|w|_{H^1(\Ga(t))} := \|\nabla_\Ga w\|_{L^2(\Ga(t))}$.

\subsection{Full discretization and time interpolation}
\label{ch:subsec:full_dis_time_int}
Assume that the temporal domain is split into $K$ timesteps $0=t^0 < t^1 < \dotsb < t^K = T$ which build intervals $(t^{n-1}, t^n]$ of length $\tau^n$, such that $\sum_{j=1}^K \tau^j = t^n \leq T$. This temporal dependence will always be reflected by the superscript $^n$.
Corresponding to each timestep we write $\Gamma_h^n := \Gamma_h(t^n)$ for the discrete admissible triangulations, the respective finite element spaces $S_h^n:= \operatorname{span} \{\Phi_1^n, \dotsc, \Phi_{N^n}^n\}$ with basis functions $\Phi_j^n := \Phi_j(\cdot,t^n)$, and the possible timestep dependent degrees of freedom $N^n$.
The temporal superscript is also used for the lift $^{\ell^n}: \Ga_h^n \to \Ga(t^n)$. 
 
Due to the movement of the surface it is immediately clear that $\Ga_h^{n-1} \neq \Ga_h^{n}$ in general. However, composed with the flow map applied to the vertices of the discrete meshes, it is possible to construct methods with mesh alignment under movement. On the other hand in an adaptive setting, due to refinement and coarsening, the discrete surfaces change non-trivially in each timestep. Thus even flowing discrete meshes along the exact flow will not guarantee that consecutive meshes will align. This is the main concern when dealing with time discretization.

We employ backwards difference method to derive the full discretization of \eqref{eq:semidiscreteheat}. To be able to compare $W_h^{n-1} \in S_h^{n-1}$ and $W_h^{n} \in S_h^{n}$ we introduce a \textit{refinement interpolation operator} $\IntRef \colon S_h^{n-1} \rightarrow S_h^n$, in a similar fashion as in \cite{KL25}. 
More details on the construction for evolving surfaces is given within the construction of the common triangulation in Section~\ref{sec:smallest_common_triangulation}.

Utilising the refinement interpolant we state the full discrete method: Given $U_h^0 \in S_h^0$, determine $U_h^n \in S_h^n$ for $n = 1,\dots,K$ such that 
\begin{align}
	\label{eq:fulldiscreteheat}
	\left(\frac{U_h^n-\IntRef U_h^{n-1}}{\tau^n},\Phi_h^n \right)_{L^2(\Ga_h^n)} + (\nabla_{\Gamma_h^n} U_h^n, &\nabla_{\Gamma_h^n} \Phi_h^n)_{L^2(\Ga_h^n)} \nonumber \\
	+ (U_h^n,\Phi^n_h \nabla_{\Gamma_h^n} \cdot V_h^n)_{L^2(\Ga_h^n)} &= (F_h^n, \Phi_h^n)_{L^2(\Ga_h^n)} \qquad \forall \Phi_h^n \in S_h^n .
\end{align}

The discrete sequence of solutions $(U_h^n)_{n=1}^K$ has to be extended temporally to be comparable to the exact solution $u$. 
Following \cite{DzEll12_Fully_Disc_ESFEM}, we trivially extend any finite element function $W_h^n \in S_h^n$ constantly along the vertex-flow, which we denote by an underscore:
\begin{align*}
\underline{W_h^n} (\cdot,s) := \sum_{j=1} ^N (W_h^n)_j \Phi^n_j(\cdot,s).
\end{align*}
With the nodal basis functions $\{\Phi^n_j(\cdot,s)\}$ of the semidiscrete formulation. The extension $\underline{W_h^n}$ of the discrete $W_h^n$ thus is a constant push-forward (or pull-back) along the discrete evolving mesh $\Ga_h^n(t)$ (based on the node set at time level $n$).
We denote $S_h^n(t)$ as the FEM space of $\Ga_h^n(t)$ based on the node set $G(X_j^n,t^n,t)$. Thus $\underline{W_h^n} \in S_h^n(t)$.
Note that we can also extend functions on $\Ga$ along the exact flow given by $G$.

Using this extension we introduce two linear affine interpolations, as in \cite[Section~3.2]{KL25}, one of which is continous and defined on $\Gamma$, the other discrete and easy to lift:
The continous in time lifted discrete solution: 
\begin{equation}
\label{eq:lifted_discrete_sol}
	u_{h,\tau} (x,t) := \frac{t-t^{n-1}}{\tau^n} \big( \underline{U_h^n} \big)^{\ell^n(t)} (x,t) + \frac{t^n -t}{\tau^n} \big( \underline{U_h^{n-1}} \big)^{\ell^{n-1}(t)} (x,t) , \qquad t \in [t^{n-1},t^n], \quad x \in \Ga(t),
\end{equation}
for $n = 1,\dots,K$.
And the piecewise defined discrete function, with an additional time variable for later analysis:  
\begin{equation}
\label{eq:fully discrete solution definition}
	\baruhtau(x,t,s) := \frac{t-t^{n-1}}{\tau^n} \underline{U_h^n}(x,s) + \frac{t^n -t}{\tau^n} \underline{\IntRef U_h^{n-1}} (x,s) , \quad s,t \in (t^{n-1}, t^n],\qquad x \in \Ga_h^n(s),
\end{equation}
for $n = 1,\dots,K$ and $\baruhtau(x,0,0) = U_h ^0(x)$. The discrete object can easily be lifted as $\baruhtau(x,t,s)^{\ell^n(s)} =: \baruhtaulift(x,t,s)$ following the usual convention.
For $\baruhtau$ we introduced a second time variable as it will simplify the main ideas in the upcoming analysis. Note that the additional variable of \eqref{eq:fully discrete solution definition} can be seen as the extension of $\baruhtau$ evaluated at some time $t$ and shifted onto $\Ga_h^n(s)$. We will omit the final argument if $s=t$.

Observe that, by the transport property of the basis functions, the discrete material derivative of $\baruhtau(x,t)$, simplifies for $t \in (t^{n-1},t^n]$, pointwise on $\Ga_h^n(t)$ to
\begin{align*}
\label{eq:disc_derivative}
\partialmh \baruhtau(x,t) = \frac{\underline{U_h^n}(x,t)-\underline{\IntRef U_h^{n-1}}(x,t)}{\tau^n}.
\end{align*}
This matches the discrete weak formulation \eqref{eq:fulldiscreteheat} for $t = t^n$.

Following \cite{DzEll12_Fully_Disc_ESFEM}, for $t \in (t^{n-1},t^n]$ and a point $X(t) \in \Ga_h^n(t)$ moving along the discrete velocity $V_h$, we define the induced discrete material velocity on $\Ga(t)$ by $v_h(X^{\ell^n(t)}(t),t) := \frac{\d}{\d t} X^{\ell^n(t)}(t)$. Its corresponding material derivative reads element-wise for all $w \in H^1(\Ga(t))$
\begin{align*}
\partialmh w := \partial_t w + v_h \cdot \nabla_{\Ga(t)} w.
\end{align*}
Note that the definition of $v_h$ depends on the node set of $\Ga_h^n$. On each time interval this is fixed. Although $\baruhtaulift$ is not differentiable in time, in an interval-wise setting we have $\partial_h^\bullet \baruhtaulift = 0$ as in \cite[Section~2.2]{DzEll12_Fully_Disc_ESFEM}. We highlight that $\partial^\bullet \baruhtaulift$ does, in general, not simplify like this.
In contrast to our convention, but following the original notation \cite[Eq.~2.12]{DzEll12_Fully_Disc_ESFEM}, the induced discrete velocity is not equivalent to the lifted interpolated discrete velocity, i.e.\ $v_h \neq V_h^\ell$.

\subsection{Definition and bounds of bilinear forms}
\label{sec:def_bilinear_forms}
To compactly state the a posteriori error analysis we introduce a set of bilinear forms from \cite{Dziuk2013FiniteEM,KovacsHighOrder}. 
For functions $W, \Phi \in H^1(\Ga_h(t))$, their respective lifts to $\Ga(t)$ are $w = W^{\ell(t)}$, $\phi = \Phi^{\ell(t)}$, and the continuous velocity $v$ and interpolated discrete velocity $V_h$ we define the bilinear forms:
\begin{align*}
    m^t(w,\phi) &= \int_{\Gamma(t)} w\phi, & m_h^t(W,\Phi) &= \int_{\Gamma_h(t)} W\Phi, \\
    a^t(w,\phi) &= \int_{\Gamma(t)} \nabla_{\Gamma(t)} w \cdot \nabla_{\Gamma(t)} \phi, & a_h^t(W,\Phi) &= \int_{\Gamma_h(t)} \nabla_{\Gamma_h(t)} W \cdot \nabla_{\Gamma_h(t)} \Phi, \\
    g^t(v; w,\phi) &= \int_{\Gamma(t)} (\nabla_{\Gamma(t)} \cdot v(t)) w\phi, & g_h^t(V_h;W,\Phi) &= \int_{\Gamma_h(t)} (\nabla_{\Gamma_h(t)} \cdot V_h(t)) W\Phi, \\
    b^t(v;w,\phi) &= \int_{\Gamma(t)} \mathcal{B}(v(t)) \nabla_{\Gamma(t)} w \cdot \nabla_{\Gamma(t)} \phi, & b_h^t(V_h;W,\Phi) &= \int_{\Gamma_h(t)} \mathcal{B}_h(V_h(t)) \nabla_{\Gamma_h(t)} W \cdot \nabla_{\Gamma_h(t)} \Phi,
\end{align*}
the components $i,j = 1,2,3$ of the tensors are given by:
\begin{align*}
\mathcal{B}(v(t))|_{ij} &:= \delta_{ij} (\nabla_{\Gamma(t)} \cdot v(t))-\big((\nabla_{\Gamma(t)})_i v_j(t) +(\nabla_{\Gamma(t)})_j v_i(t) \big), \\
\mathcal{B}_h (V_h(t))|_{ij} &:= \delta_{ij} (\nabla_{\Gamma_h(t)} \cdot V_h(t))-\big((\nabla_{\Gamma_h(t)})_i (V_h(t))_j +(\nabla_{\Gamma_h(t)})_j (V_h(t))_i \big).
\end{align*}
In the above definitions the velocities are separated by a semicolon, in order to clearly indicate the main variables of the bilinear forms. Further the superscript labels the evaluation point of all functions and the integral domain, unless it is specified otherwise.

Based on the Leibniz formula \eqref{eq:leibniz_formula}, the derivatives of bilinear forms are given as.
\begin{equation}
\label{eq:form_derivatives}
\begin{aligned}
    \frac{\d}{\d t} m^t(w,\phi)
    &= m^t(\partial^\bullet w, \phi) + m^t(w, \partial^\bullet \phi) + g^t(v;w,\phi), \\
    \frac{\d}{\d t} a^t(w,\phi)
    &= a^t(\partial^\bullet w, \phi) + a^t(w, \partial^\bullet \phi) + b^t(v;w,\phi).
\end{aligned}
\end{equation}
Similar formulas hold for discrete forms.
Additionally the material derivative, can in accordance to swapping $v$ with $v_h$, be exchanged by the discrete material derivative $\partialmh$. 

The following geometric bounds in \ref{prop:geo_err_bilin_form} between continous and discrete bilinear forms are based on the results of \cite[Lemma~5.5]{Dziuk2013FiniteEM}, for the first two inequalities, and of \cite[Lemma~7.5]{Lubich_Mansour2015} for the last two. We modified the statement via norm equivalence arguments \eqref{eq:norm_equiv} to fit our framework. 
Inserting the lifted discrete objects in the bilinear forms, we bound the difference of the continuous and discrete forms in the following sense:
\begin{proposition}
	\label{prop:geo_err_bilin_form}
	Let $W, \Phi \in H^1(\Ga_h(t))$, their respective lifts to $\Ga(t)$ are $w = W^{\ell(t)}$, $\phi = \Phi^{\ell(t)}$, and the discrete velocities are $v_h$ on $\Ga(t)$ and $V_h$ on $\Ga_h(t)$ described in Section~\ref{sec:discretization}, then the following geometric bounds hold for a generic constant $c>0$ which is independent of $h$ and $t$ but depends on $\Ga(t)$.
	\begin{align*}
		|m^t(w,\phi)-m_h^t(W,\Phi)| &\leq ch^2 \|W\|_{L^2(\Gamma_h(t))} \|\Phi\|_{L^2(\Gamma_h(t))},  \\
		|a^t(w,\phi)-a_h^t(W,\Phi)| &\leq ch^2 |W|_{H^1(\Gamma_h(t))} |\Phi|_{H^1(\Gamma_h(t))},  \\
		|g^t(v_h;w,\phi)-g_h^t(V_h;W,\Phi)| &\leq ch^2 \|W\|_{L^2(\Gamma_h(t))} \|\Phi\|_{L^2(\Gamma_h(t))},  \\
		|b^t(v_h;w,\phi)-b_h^t(V_h;W,\Phi)| &\leq ch^2 |W|_{H^1(\Gamma_h(t))} |\Phi|_{H^1(\Gamma_h(t))}.
	\end{align*}
\end{proposition}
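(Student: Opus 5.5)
The plan is to reduce all four estimates to the standard pointwise geometric perturbation bounds between $\Ga_h(t)$ and $\Ga(t)$, transported through the lift. I would then invoke \cite[Lemma~5.5]{Dziuk2013FiniteEM} and \cite[Lemma~7.5]{Lubich_Mansour2015}, and finish with the norm equivalence \eqref{eq:norm_equiv}.

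\textbf{Setting up the change of variables.} Fix $t$ and pull every integral over $\Ga(t)$ back to $\Ga_h(t)$ via the closest point projection \eqref{eq:uniquedecom}. Write $\d \sigma = \delta_h \d \sigma_h$, where $\delta_h$ is the quotient of the surface measures.

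\textbf{Mass and stiffness forms.} The tangential gradients transform as $\NaGa w = (I - d\mathcal{H})^{-1} P_h \nabla_{\Ga_h} W$, up to the usual projections, where $\mathcal{H}$ is the Weingarten map. This gives
\begin{align*}
m^t(w,\phi) - m_h^t(W,\Phi) &= \int_{\Ga_h(t)} (\delta_h - 1) W \Phi, \\
a^t(w,\phi) - a_h^t(W,\Phi) &= \int_{\Ga_h(t)} (A_h - P_h)\nabla_{\Ga_h} W \cdot \nabla_{\Ga_h} \Phi .
\end{align*}
Here $A_h = \delta_h P_h (I-d\mathcal{H}) P (I-d\mathcal{H}) P_h$. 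Since the vertices interpolate $\Ga(t)$, one has $\|d\|_{L^\infty} \le ch^2$, $\|1-\delta_h\|_{L^\infty} \le ch^2$ and $\|A_h - P_h\|_{L^\infty} \le ch^2$. The constants depend on the curvature of $\Ga(t)$, which is uniform in $t$ by the $C^1([0,T];C^3)$ regularity of $G$. Cauchy--Schwarz then gives the first two bounds, which is exactly \cite[Lemma~5.5]{Dziuk2013FiniteEM}.

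\textbf{Velocity-dependent forms.} For $g$ and $b$ the same pull-back applies, but the weights now contain $\nabla_{\Ga}\cdot v_h$ and $\mathcal{B}(v_h)$ in place of their discrete counterparts. The key observation is that $v_h$ is by definition the velocity of the lifted points moving with $V_h$. Hence $v_h\circ(\cdot)^{\ell} = V_h + \frac{\d}{\d t}(d\,\nu)$-type corrections, i.e.\ the time derivative of the lift displacement. These corrections are $O(h^2)$ pointwise, together with their tangential gradients on each element, because $\partial_t d$ vanishes at the nodes and is controlled by the smoothness of $G$ and $d$ in time.

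From this I would derive $\|\nabla_\Ga\cdot v_h - (\nabla_{\Ga_h}\cdot V_h)\delta_h^{-1}\|_{L^\infty}\le ch^2$ and the analogous bound for $\mathcal{B}$ after the gradient transformation. This is the content of \cite[Lemma~7.5]{Lubich_Mansour2015}, established there for the ESFEM setting with interpolating nodes. It applies since our nodes move with the exact flow, so $V_h$ is the nodal interpolant of $v$.

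\textbf{Final assembly.} Lemma~7.5 is stated with norms on $\Ga(t)$ (or on mixed domains). I would convert all right-hand sides to norms on $\Ga_h(t)$ using \eqref{eq:norm_equiv}, absorbing the equivalence constant into $c$. The main obstacle is verifying that the elementwise gradient bounds on $v_h - V_h^{\ell}$ hold uniformly in $t$ and on each interval where the node set is fixed. This requires the $C^1$-in-time regularity of $G$ and of the signed distance function. If needed, I would write $v_h - V_h^\ell = \partial_t$ of the lift map at fixed discrete point and bound it by differentiating \eqref{eq:uniquedecom} in time.
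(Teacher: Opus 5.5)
Your overall plan, namely \cite[Lemma~5.5]{Dziuk2013FiniteEM} for $m,a$, \cite[Lemma~7.5]{Lubich_Mansour2015} for $g,b$, and then \eqref{eq:norm_equiv} to rewrite the right-hand sides on $\Ga_h(t)$, is exactly what the paper does. As a citation argument it is therefore fine.

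The self-contained mechanism you sketch for the velocity-dependent forms is wrong, however, and the ``main obstacle'' you name is a false statement. Take the rigidly rotating sphere $\Ga(t)=\{|y|=R\}$ with $v=\omega\times y$. Then $V_h=\omega\times x$ exactly, $\Ga_h(t)$ rotates rigidly, and so $v_h=v$. With $\hat x=x/|x|$ and $d=|x|-R$ one gets $v_h^{-\ell}-V_h=-d\,\omega\times\hat x$. Since $\nabla_{\Ga_h}d=P_h\hat x$ with $P_h=I-\nu_h\otimes\nu_h$ and $|P_h\hat x|\sim h$, the elementwise gradient of this correction is of exact order $h$, not $h^2$. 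Even its divergence,
\[
\nabla_{\Ga_h}\cdot(v_h^{-\ell}-V_h)=-(P_h\hat x)\cdot(\omega\times\hat x)+O(h^2),
\]
is generically of order $h$. Nevertheless $\NaGa\cdot v_h=0=\nabla_{\Ga_h}\cdot V_h$. The $O(h)$ term is cancelled by an equally large discrepancy between $\nabla_{\Ga_h}\cdot v_h^{-\ell}$ and $(\NaGa\cdot v_h)^{-\ell}$, because for ambient vector fields the tangential traces on $\Ga_h$ and on $\Ga$ differ at order $h$. A pointwise comparison of $v_h$ with $V_h^\ell$ cannot see this cancellation, so your route only yields $O(h)$. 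A smaller point: $\partial_t d=-v\cdot\nu\neq 0$ at the nodes in general; only the derivative of $d$ along the node paths vanishes there.

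What actually produces $h^2$, and explains why the statement pairs $v_h$ with $V_h$, is that $\partialmh$ commutes with the lift. Extend $W,\Phi$ from time $t$ by $\partialmh W=\partialmh\Phi=0$. Then use \eqref{eq:form_derivatives} with $(v_h,\partialmh)$ together with its discrete analogue. Since $m^t(w,\phi)-m_h^t(W,\Phi)=\int_{\Ga_h(t)}(\delta_h-1)W\Phi$, this gives
\[
g^t(v_h;w,\phi)-g_h^t(V_h;W,\Phi)=\frac{\d}{\d t}\int_{\Ga_h(t)}(\delta_h-1)W\Phi=\int_{\Ga_h(t)}\big(\partialmh\delta_h+(\delta_h-1)\nabla_{\Ga_h}\cdot V_h\big)W\Phi .
\]
In the same way, $b^t-b_h^t$ is the time derivative of $\int_{\Ga_h}(A_h-P_h)\nabla_{\Ga_h}W\cdot\nabla_{\Ga_h}\Phi$, where $A_h$ is the matrix that actually represents $a^t$ on $\Ga_h(t)$. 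Differentiating produces $\partialmh(A_h-P_h)$ plus terms already bounded via $\|A_h-P_h\|_{L^\infty}\le ch^2$. So the estimates to establish, uniformly in $t$, are
\[
\|\partialmh\delta_h\|_{L^\infty}+\|\partialmh(A_h-P_h)\|_{L^\infty}\le ch^2 .
\]
These are material derivatives of the static geometric perturbations, and they hold because the nodes follow the smooth exact flow. That, not a gradient bound on $v_h-V_h^\ell$, is the step you would need to prove if you want to go beyond the citation.
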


From now on, if the bilinear forms are evaluated at a discrete timestep we abbreviate $m^{t^n} =: m^n$.

\subsection{Smallest common refinement}
\label{sec:smallest_common_triangulation}

Finally to state the main result of this paper we introduce the notion of smallest common refinements based on the ideas of \cite{Kreuzer12} in the Euclidean case and extend the results for stationary surfaces of \cite[Section~3.3]{KL25}. 

Given a discrete mesh, which, by our assumptions of using the exact flow, will always be an interpolation of the surface $\Ga(t)$ for all times $t \in [0,T]$, we extend the mesh to $\Gamma_h^n(t)$  by keeping the connectivity and moving the nodes $X \in \Gamma_h^n$ along the flow \eqref{eq:flow_ode} as described in Section~\ref{ch:subsec:full_dis_time_int}.

Utilising the flow map, to push discrete node sets to a single reference domain, allows us to formulate the basis for a common refinement for some fixed time $t \in (t^{n-1},t^n]$ as in the non-evolving case. We define the smallest common refinement of subsequent meshes $\Gamma_h^{n-1}(t)$ and $\Gamma_h^n(t)$ as $\Gamma_h^{\comesh}(t) := \Gamma_h^{n-1}(t) \oplus \Gamma_h^n(t)$.

Additionally, we define the common finite element space $S_h^{\comesh}(t)$. Finally, we introduce the interpolation operators $\mathcal{I}_k^{\comesh} \colon S_h^k(t) \rightarrow S_h^{\comesh}(t)$ for both $k = n-1$ and $k=n$, respectively.
This interpolation is constructed based on the nodal values of the parent meshes: 
\begin{enumerate}
	\item[--] For nodes belonging to both $\Gamma_h^{n-1}(t)$ and $\Gamma_h^{\comesh}(t)$, and for nodes belonging to both $\Gamma_h^n(t)$ and $\Gamma_h^{\comesh}(t)$, respectively, the nodal values are kept.
	\item[--] For nodes which are missing in either $\Gamma_h^{n-1}(t)$, or which are missing in $\Gamma_h^n(t)$, respectively, the nodal value is assigned by the Lagrangian interpolation.
\end{enumerate}

In comparison to the stationary surface case \cite{KL25} the movement introduces further difficulties, which adds further complexity to the vertex and element matching.
This can be seen by the mismatch of the lifted nodes and the ones we flow in time to generate the common triangulation, see Figure~\ref{fig:common_triangulation} where the nodes marked by cross ("$x$") do not align with the flown nodes coming from $\Gamma_h^{n-1}$.
Assuming that the refinement and coarsening process is handled carefully, it is possible to reduce the problem, such that the vertices align under the flow, to the stationary case. 
Then the $\oplus$ operator matches precisely with the one introduced in \cite[Section~3.3]{KL25} with the main idea to keep matching elements/vertices and always taking the most refined elements if one of the meshes is locally more refined.

Note that in general the common refinement is not necessarily a refinement of the underlying meshes, due to the nonlinear lifting process required for keeping the interpolation property of the mesh for all times. For the evolving case, where lifts are taken at two discrete time-levels, this is even more complex as the vertex correspondences are in general not related by a set of consecutive lifts.

\textbf{Guaranteeing node alignment for consecutive meshes}
The main issue with the construction is that vertices are constructed linearly but are getting misaligned by the non-commuting and non-linear discrete lifts $\ell^{n-1}$ and $\ell^n$ and flow map $G$. 
In particular if ones compares the bottom right triangulation in Figure~\ref{fig:common_triangulation}, with the non-matching vertices (where the grey circle and the black cross ("$x$") are slightly apart). The black cross results from push-forward to time $t^n$ and the respective lift $\ell^{n}$, whereas the grey vertex results from lift at time $t^{n-1}$ via $\ell^{n-1}$ and then push-forward to $t^n$.
The construction, which guarantees node alignment for consecutive meshes is depicted in Figure~\ref{fig:common_triangulation}.
\begin{figure}[htbp]
    \centering
    \includegraphics[width=1\linewidth]{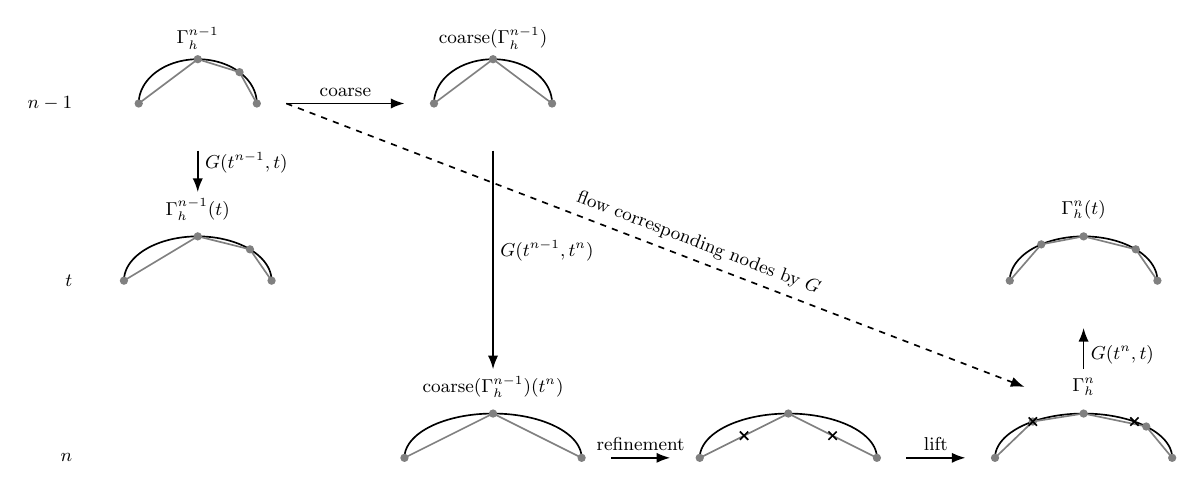} 
    \caption{Construction of triangulations to guarantee node alignment for consecutive meshes. Each row corresponds to the time $t^{n-1}$, $t$ and $t^n$ from top to bottom. The meshes in the middle row, i.e.\ $\Ga_h^{n-1}(t)$ and $\Ga_h^n(t)$ are the basis for the common triangulation.}
	\label{fig:common_triangulation}
\end{figure}

We can control the vertex movement in a semi-local (explained in Remark~\ref{rem:semi_local_common_mesh}) fashion by assuming the following mesh generation steps:
\begin{itemize}
\item There is an initial coarse admissible triangulation $\Ga_h^{\text{init}}$ whose nodes are never coarsened. This means that all vertices $X_i$ of $\Ga_h^{\text{init}}$ are also present as vertices $G(X_i,0,t^n)$ in $\Gamma_h^n$ for all discrete timesteps.
\item Provided we determined a solution $u_h^{n-1}$ based on \eqref{eq:fulldiscreteheat} with its corresponding mesh $\Ga_h^{n-1}$ (top left of Figure~\ref{fig:common_triangulation}), we apply a coarsening step based on newest-vertex-bisection (NVB), resulting in $\text{coarse}(\Ga_h^{n-1})$ (top right of Figure~\ref{fig:common_triangulation}). The mesh is transported by the flow $G$ to time $t^n$ to initialize the first guess of a mesh $\Ga_h^n$ (bottom Left of Figure~\ref{fig:common_triangulation}) to solve the subsequent step, i.e.\ vertices are moved along the flow and connectivity is unchanged.
\item The refinement process updates $\Ga_h^n$ until the provided solution is accepted in the adaptive routine. All refinements are based on NVB and to handle vertex mismatch the process is divided in two cases:
\begin{enumerate}
\item A refined node is constructed by applying the refinement, creating intermediate nodes (see bottom middle of Figure~\ref{fig:common_triangulation}) which in general do not interpolate the surface $\Ga(t^n)$ and then lifted based on $\ell^n$ to regain the admissibility of the mesh (nodes marked by crosses in bottom right of Figure~\ref{fig:common_triangulation}). Note that the intermediate nodes are the evaluation points used to define the refinement interpolant $\IntRef$.
\item But if a refined node corresponds to a vertex which was just coarsened in the initialization step the construction is different. Instead of constructing a new vertex we update $\text{coarse}(\Ga_h^{n-1})$ by reappending the corresponding vertex and connectivity. Efficiently we refine by storing the new connectivity and then update the coordinates for just coarsened nodes by flowing the corresponding vertices at time $t^{n-1}$ with the flow $G$ to time $t^n$ (see long dashed arrow from top left to bottom right of Figure~\ref{fig:common_triangulation}).  
\end{enumerate}
\end{itemize}
These assumptions suffice to construct the smallest common refinement as it guarantees that corresponding nodes always align under the flow.
Note however that we only avoid the mismatch under the flow for two subsequent meshes but for any set of discrete meshes this property fails in general.
Further note that the NVB guarantees that the refinement hierarchy is unique, which implies that the connectivity and how elements are bisected is always identical.

\begin{remark}
The initial coarse triangulation is enforced as a technical tool to guarantee that the closest point projection is bijective for all times.
Further we always work with NVB, which is the typical choice for adaptive finite elements on surfaces (see, e.g.\, \cite{Demlow2007, KL25}),
but general successive bisections, red-refinement and other strategies fulfilling Conditions 3,4 and 6 of \cite{Bonito_Nochetto_2010_mesh_cond} are valid candidates for refinement strategies on surfaces as discussed in \cite{Bonito2013}.
\end{remark}

\begin{remark}
\label{rem:semi_local_common_mesh}
We have seen that the construction of the common mesh is affected by the movement of the mesh, in particular that two subsequent meshes introduce two distinct lifts, which is used in every subsequent refinement making identification of nodes non-trivial. We discuss two further options and argue why the current construction is chosen.

1. Instead of dealing with different lifts it would also be possible to base all refinement on a singular base mesh (say at time $t= 0$) and then flow the points along the exact flow. However this is in general not desirable as geometric features could be better resolved if we work with the mesh at the current timestep instead. Further, constantly flowing points from the initial surface can be costly, in particular if the flow of vertices is approximated by a time-stepping scheme.

2. It is theoretically possible to allow the mismatch and base all refinements on their respective current discrete timestep. This requires the analysis of the resulting perturbations.
If we focus on a singular vertex which was constructed as a lift at time $t^{n-1}$, then coarsened and later re-refined at time $t^n$ with corresponding lift, and flow both meshes to time $t$, we observe a mismatch. However for sufficiently small timesteps and the assumption that $G \in C^2$, the coordinate mismatch can be shown to be $\mathcal{O}(h^2)$ where $h$ is the local element size. If one carefully extends the analysis of \cite{KL25} the new resulting error would be an error between two identical finite element functions but where a subset of nodes are perturbed by $\mathcal{O}(h^2)$. The resulting error is a geometric error and one can show that it has the same orders as the other geometric indicators but as the functions are still compared on $\Ga(t)$ an arbitrary point $a \in \Ga(t)$ unlifts to possibly different elements in the two distinct perturbed triangulations. This situation is similar to Figure~\ref{fig:mismatch_common_triangulation}, however without guarantee that the unlift is inside a parent element. This requires us to introduce patch-wise estimates which makes the coarsening indicator complex to evaluate. 

Our restrictions allow us to avoid node mismatches but without enforcing that refinement is done on a singular mesh. Further the refinement and coarsening process is purely local and only requires information at the current timestep, but as required for the analysis the initial macro-triangulation is always kept which could lead to suboptimal node positioning. 
We refer to Section~\ref{sec:dumbell} for further discussion.
\end{remark}

\section{Main results}
\label{ch:main_result}
To derive reliable and efficient error indicators (up to oscillation, coarsening, mesh-transfer, high-order geometric contributions and flow consistency errors) we employ residual-based error analysis. The resulting indicators allow for space--time adaptivity (see Section~\ref{sec:numerical_experiments}) for our model problem of a parabolic PDE on an evolving surface \eqref{eq:heat_strong}. Note that the following results are expected to be extendible to more general parabolic PDEs on evolving surfaces.

We start by formulating the main results, including the equivalence of error and residual, and the upper and lower bounds to the error by given error indicators.
As is typical for residual-based error analysis \cite{Verfuerth2003, Kreuzer12,KL25}, we excluded the data oscillation term in the a posteriori error analysis. 
We highlight that the efficiency results are restricted to the spatial and temporal residuals, which entail the key local information to resolve the PDE.
It is usual to exclude terms arising from the coarsening and geometric residuals in the efficiency analysis, as they do not dominate the errors, see the discussion in \cite[Section~4]{KL25}.

However, there are additional consistency terms for which we only show reliability, namely the velocity-induced temporal indicator and the movement indicator, both resulting due to the flow of the surface. We will see that these terms ensure that the functions are transported sufficiently well in time, in particular preserving the correct mass transport in areas of high velocity divergence. The resulting indicators will yield an additional tool to control the time-step size, in fact it would even be possible to have local time-stepping control based on these indicators, however we will not follow this approach and assume adaptive global time-stepping. In our proposed algorithm, see Section~\ref{sec:numerical_experiments}, the velocity-induced indicator and the movement indicator will be used to determine a first guess for a  sufficiently small step-size to ensure good mass transport, which is then possibly further refined in the adaptive interplay of spatial and temporal refinements by the residual components directly related to the PDE.
Also algorithmically we assess whether the mesh-transfer between timesteps dominates the error, in the numerical examples of Section~\ref{sec:numerical_experiments}, this was not the case.

To formulate the main results we state the following bound and definitions:
By $c$ we will always denote a generic positive constant, that is independent of $h$, $\tau^n$, and $n$, but may change its value between steps. 
Note that the standard regularity assumption of the flow map \eqref{eq:flow_ode} \cite{dziuk_elliott_2012, Dziuk2013FiniteEM}, implies for the following bounds on the velocity 
\begin{align}
\label{eq:kappa}
\|\nabla_{\Gamma(t)} \cdot v(t)\|_{L^\infty(\Gamma(t))} + \|\mathcal{B}(v(t))\|_{L^\infty(\Gamma(t))} \leq \kappa \qquad \forall t \in (0,T],
\end{align}
with a time independent $\kappa \in \R$.
Restricted to $t \in (t^{n-1},t^n]$, we define the possibly smaller interval-wise $\kappa^n$, in the same manner. 
Additionally we define the $\kappa$-dependent a global constant by $C_\kappa = \exp(\kappa T)$ and an interval-wise constant by $C_\kappa^n := \exp(\kappa^n \tau^n)$. 
Note that these bounds extend, possibly with an additional scalar constant, to the discrete interpolated velocity $V_h$ (see \cite[Remark~3.3]{dziuk_elliott_2012}) and discrete velocity $v_h$ by interpolation estimates (see \cite[Lemma~5.6]{DziukElliott2013_L2estimates}) for sufficiently small $h \leq h_0$.

For $t \in (0, T]$ and $w \in H^1(\Gamma(t))$ the residual is defined by:
\begin{equation}
	\label{eq:res}
	\langle \mathcal{R}(u_{h,\tau}),w\rangle = m^t(\partial^\bullet u_{h,\tau},w)+a^t(u_{h,\tau},w)+g^t(v;u_{h,\tau},w)-m^t(f,w).
\end{equation}
The key proposition is the equivalence of error and residual. To formulate the proposition we define the natural norm for parabolic PDEs on evolving surfaces with $s,t \in [0,T]$:
\begin{equation}
	\label{eq:graph_norm}
	\|w\|_{X(s,t)}^2 := \|w\|_{L^\infty (s,t; L^2 (\Ga(\cdot)))}^2 +\|w\|_{L^2 (s,t; H^1(\Ga(\cdot)))}^2 + \|\partialm w\|_{L^2 (s,t; H^{-1} (\Ga(\cdot)))}^2 .
\end{equation}
The notation is understood in the sense of \cite{AlphonseElliottStinner}, with the choice $V(t)=H^1(\Gamma(t))$, $H(t)=L^2(\Gamma(t))$, and
$V^*(t)=H^{-1}(\Gamma(t))$ in their notation.

Thus we can state the equivalence result.
\begin{proposition}
\label{prop:equiv_error_res}
Assume the velocity satisfies \eqref{eq:kappa}, then the residual $\mathcal{R}(u_{h,\tau})$ from \eqref{eq:res} and the error, between the exact solution of \eqref{eq:weak_form} and the discrete solution \eqref{eq:lifted_discrete_sol}, in the graph norm \eqref{eq:graph_norm} obey for all $w \in L^2(0,T; H^1(\Ga(\cdot)))$
	\begin{subequations}
		\begin{align}
			\label{eq:res_equiv_2}
			\|u -  u_{h,\tau} \|_{X(0,T)} 
			\leq &\  c C_\kappa \left( \|u^0 - (u_h^0) \|^2 _{L^2(\Ga^0)} + \|\mathcal{R}(u_{h,\tau}) \|^2 _{L^2(0,T; H^{-1} (\Ga(\cdot)))} \right)^{1/2} , \\
			\label{eq:res_leq_error}
			\int_0^{T} \langle \mathcal{R}(u_{h,\tau}), w \rangle dt \leq &\  (1+\kappa) \|u- u_{h,\tau}\|_{X(0,T)} \|w\|_{L^2(0,T; H^1(\Ga(\cdot)))}.
		\end{align}
	\end{subequations}
	The constants $c > 0$ is independent of $h$, $\tau^n$, and $n$, but depends exponentially on the final time $T$.
\end{proposition}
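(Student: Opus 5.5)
The plan is to work with the error $e := u - u_{h,\tau}$ on $\Gamma(t)$ and to use the fact that $u$ solves \eqref{eq:weak_form}. Subtracting \eqref{eq:res} from \eqref{eq:weak_form} gives the error equation
\begin{align*}
m^t(\partial^\bullet e,w)+a^t(e,w)+g^t(v;e,w) = -\langle \mathcal{R}(u_{h,\tau}),w\rangle \qquad \forall w \in H^1(\Gamma(t)),
\end{align*}
for almost every $t$. This relies on $u_{h,\tau}$ being continuous in time and lying in the solution space in the sense of \cite{AlphonseElliottStinner}, so that $\partial^\bullet u_{h,\tau}$ is defined interval-wise. This is exactly why the continuous interpolant \eqref{eq:lifted_discrete_sol} is used rather than $\baruhtaulift$. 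Both estimates are then read off from this identity.

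For the lower bound \eqref{eq:res_leq_error}, I test the error equation with $w$, integrate over $(0,T)$, and bound the three terms separately. The material derivative term gives $\int_0^T m^t(\partial^\bullet e,w) \le \|\partial^\bullet e\|_{L^2(H^{-1})}\|w\|_{L^2(H^1)}$. The gradient term is handled by Cauchy--Schwarz. For the $g^t$ term I use \eqref{eq:kappa} together with $\|e\|_{L^2}\le\|e\|_{H^1}$. Collecting these, each contribution is bounded by $(1+\kappa)\|e\|_{X(0,T)}\|w\|_{L^2(0,T;H^1)}$ in the worst case.

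For the upper bound \eqref{eq:res_equiv_2}, I test with $w=e$ and use the Leibniz-type identity \eqref{eq:form_derivatives}:
\begin{align*}
m^t(\partial^\bullet e,e)=\tfrac12\tfrac{\d}{\d t}\|e\|^2_{L^2(\Gamma(t))}-\tfrac12 g^t(v;e,e).
\end{align*}
This yields
\begin{align*}
\tfrac12\tfrac{\d}{\d t}\|e\|^2+|e|_{H^1}^2 = -\tfrac12 g^t(v;e,e)-\langle\mathcal{R},e\rangle \le \tfrac{\kappa}{2}\|e\|^2+\tfrac12\|\mathcal{R}\|_{H^{-1}}^2+\tfrac12\|e\|_{H^1}^2 .
\end{align*}
After absorbing the gradient part and keeping an extra $\|e\|^2$ on the right, Gronwall's lemma controls $\|e\|_{L^\infty(L^2)}^2+\|e\|_{L^2(H^1)}^2$ by $e^{cT(1+\kappa)}$ times $\|e(0)\|^2+\|\mathcal{R}\|^2_{L^2(H^{-1})}$. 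Here $e(0)=u^0-u_h^0$, which is lifted by $\ell^0$. Finally, the dual norm of $\partial^\bullet e$ follows from the error equation itself:
\begin{align*}
\|\partial^\bullet e\|_{H^{-1}} \le (1+\kappa)\|e\|_{H^1}+\|\mathcal{R}\|_{H^{-1}}.
\end{align*}
Squaring and integrating this closes the $X(0,T)$ bound, with constant of the form $cC_\kappa$.

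I expect the main obstacle to be justifying these manipulations for the piecewise-in-time $u_{h,\tau}$. At the nodes $t^n$ the two pieces use different lifts $\ell^{n-1}$ and $\ell^n$ and different node sets, so I need $u_{h,\tau}$ continuous in $L^2(\Gamma(t))$ across time levels with $\partial^\bullet u_{h,\tau}\in L^2(H^{-1})$. Continuity holds by construction of \eqref{eq:lifted_discrete_sol}, since at $t^n$ both adjacent pieces reduce to $(U_h^n)^{\ell^n}$. I would therefore carry out the energy argument interval by interval and then chain the intervals, which also makes the interval-wise constants $C_\kappa^n$ appear naturally.
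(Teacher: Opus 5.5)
Your proposal is correct and follows the paper's proof essentially step by step. The shared steps are: the error equation obtained from $\mathcal{R}(u)=0$; testing with $e$ and using $m^t(\partial^\bullet e,e)=\frac12\frac{d}{dt}\|e\|_{L^2(\Gamma(t))}^2-\frac12 g^t(v;e,e)$, absorbing the $H^1$-seminorm and applying Gronwall; reading off the $H^{-1}$-bound for $\partial^\bullet e$ from the error equation; and duality with Cauchy--Schwarz for \eqref{eq:res_leq_error}. Your interval-by-interval justification, using continuity of $u_{h,\tau}$ at the $t^n$, is a more careful version of the paper's global argument, not a different route.

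One small point, which the paper shares: summing the three terms in the lower bound gives $\|\partial^\bullet e\|_{L^2(H^{-1})}+(1+\kappa)\|e\|_{L^2(H^1)}\le\sqrt{2}\,(1+\kappa)\|e\|_{X(0,T)}$. So the constant $(1+\kappa)$ in \eqref{eq:res_leq_error} holds only up to this harmless factor $\sqrt{2}$.
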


\subsection{Indicators}
We define a set of error indicators and the oscillation for time interval $t \in (t^{n-1},t^n]$:
\begin{subequations}
\label{eq:indicator}
	\begin{align}
		\label{eq:indicator - full}
		\eta^n &\phantom{:}= \Big( \tau^n \Big( (\eta_{h}^n)^2 +  (\eta_{\tau}^n)^2+ (\zeta_{\tau}^n)^2 + (\zeta_{\textnormal{move}}^n)^2 + (\eta_\textnormal{c}^n)^2 + (\eta_\textnormal{trans}^n)^2 + (\mathcal{G}_h^n)^2 +(\mathcal{G}_v^n)^2  \Big) \Big)^{\frac{1}{2}}, \\
		&\text{where the individual indicators are defined by} \nonumber \\
		\label{eq:indicator - spatial}
		(\eta_{h}^n)^2 :&= \! \sum_{S \in \mathcal{S}_h^n} \!\! h_S \big\|\llbracket \nabla_{T} U_h^n \cdot \mathrm{n}_S \rrbracket \big\|_{L^2(S)}^2 \! \nonumber \\
		& \qquad+  \!\! \sum_{T \in  \mathcal{T}_h^n} \!\! h_T^2 \Big\|\frac{1}{\tau^n} (U_h^n - \IntRef U_h^{n-1})+ (\nabla_T \cdot V_h^n) U_h^n  -F_h^n \Big\|_{L^2(T)}^2, \\
		\label{eq:indicator - temporal}
		(\eta_{\tau}^n)^2 :&= \! \sum_{T \in \mathcal{T}_h^n} |U_h^n - \IntRef U_h^{n-1}|_{H^1(T)}^2  , \\
		\label{eq:indicator - temporal2}
		(\zeta_{\tau}^n)^2 :&= \! \sum_{T \in \mathcal{T}_h^n} \|(\nabla_T \cdot V_h^n)(U_h^n - \IntRef U_h^{n-1})\|_{L^2(T)}^2  , \\
		\label{eq:indicator - move}
		(\zeta_{\textnormal{move}}^n)^2 :&= \sum_{T \in \mathcal{T}_h^n} (\kappa^n)^2 (\tau^n)^2 \Bigg(\Big\|\frac{1}{\tau^n} (U_h^n-\IntRef U_h^{n-1}) + (\nabla_{T} \cdot V_h^n) U_h^n -F_h^n\Big\|^2_{L^2(T)} + |U_h^n|^2_{H^1(T)} \Bigg), \\
		\label{eq:indicator - coarse}
		(\eta_{\textnormal{c}}^n)^2 :&= \! \sum_{T \in \mathcal{T}_{h,\textnormal{coarse}}^{\comesh}} \Big((\kappa^n)^2+\frac{1}{(\tau^n)^2}\Big) \big\|\underline{\mathcal{I}_{n}^{\comesh} \underline{\IntRef U_h^{n-1}} -\mathcal{I}_{n-1}^{\comesh}\underline{U_h^{n-1}}} \big\|_{L^2(T)}^2 \nonumber \\ 
	&\quad \qquad \qquad + \big|\underline{\mathcal{I}_{n}^{\comesh} \underline{\IntRef U_h^{n-1}} -\mathcal{I}_{n-1}^{\comesh}\underline{U_h^{n-1}}}\big|_{H^1(T)}^2 \\ 
		\label{eq:indicator - refinement}
	(\eta_{\textnormal{trans}}^n)^2 :&= \! \sum_{T \in \mathcal{T}_{h,\textnormal{ref}}^{n-1}} \bigg( \big(\frac{h_T^2}{\tau^n}\big)^2+h_T^4 (\kappa^n)^2 + h_T^2\bigg) |U_h^{n-1}|_{H^1(T)}^2 \nonumber \\
	&\quad + \sum_{T \in \mathcal{T}_{h,\textnormal{coarse}}^{n}} \bigg( \big(\frac{h_T^2}{\tau^n}\big)^2+h_T^4 (\kappa^n)^2 + h_T^2\bigg)  |\IntRef U_h^{n-1}|_{H^1(T)}^2, \\
		\label{eq:indicator - geo_h}
		(\mathcal{G}_h^n)^2 :&= \!  \sum_{T \in \mathcal{T}_h^n} h_T^4 |U_h^n|_{H^1(T)}^2 , \\
		\label{eq:indicator - geo_v}
		(\mathcal{G}_v^n)^2 :&= \! \sum_{T \in \mathcal{T}_h^n} h_T^4 \big( \|U_h^n\|_{L^2(T)}^2 + \|\IntRef U_h^{n-1}\|_{L^2(T)}^2 \big) ,\\
		\label{eq:oscillation}
		\osc^n :&= \!  f-(\underline{F_h^n})^{\ell^n(t)} + \big((\nabla_{\Ga} \cdot v_h)-(\underline{\nabla_{\Ga_h^n} \cdot V_h^n})^{\ell^n(t)} \big) \baruhtaulift.
	\end{align}	
\end{subequations}
Where the set of all edges at time $t^n$ is denoted by $\mathcal{S}_h^n$, the jump across an edge $S \in \mathcal{S}_h^n$ is given by $\llbracket w \rrbracket|_S := w|_{T_1} - w|_{T_2}$, where $T_1$ and $T_2$ are the two triangles from $\mathcal{T}_h^n$ sharing the edge $S$. Further on a discrete element $T$ the tangential gradient is denoted by $\nabla_T$, the outward edge-normal $\mathrm{n}_S$ of $S$ is defined with respect to $T_1$, and the diameter of an element is labelled $h_T$. 

We define the set of coarsened elements $\mathcal{T}_{h,\text{coarse}}^{n}$ as the set of elements of $\Ga_h^{n}$, which do not coincide with any element in the common triangulation $\Ga_h^{\comesh}(t^n)$. Conversely, we can define $\mathcal{T}_{h,\text{coarse}}^{\comesh}$ by swapping roles of $\Ga_h^n$ and $\Ga_h^{\comesh}(t^n)$.
 Analogously, the set of refined elements $\mathcal{T}_{h,\text{ref}}^{n}$ is the set of elements of $\Ga_h^{n-1}(t^n)$, which do not coincide with any element in the common triangulation $\Ga_h^{\comesh}(t^n)$.

These indicators define the global in space and local in time indicator $\eta^n$. We refer to the usual $\eta_h^n$, $\eta_\tau^n$, and  $\eta_{\textnormal{c}}^n$ as spatial, temporal and coarsening indicator respectively.
The indicators $\zeta_\tau^n$, $\zeta_\textnormal{move}^n$ and $\mathcal{G}_v^n$ are the velocity induced temporal, movement and velocity induced higher-order geometric indicator required due to movement of the surface. We refer to the first two as consistency errors of the flow and the last one is often just referred to as the geometric error, as it behaves just as the other high-order contribution $\mathcal{G}_h^n$, which arises in standard a posteriori error analysis for elliptic and parabolic surface PDEs, see, e.g.\, \cite{Demlow2007,Bonito2013,Camacho2014} and \cite{KL25} respectively.
The mesh-transfer indicator $\eta_{\textnormal{trans}}^n$, was present in \cite{KL25}, within the coarsening indicator but now separated. This separation is done to handle the mesh-transfer contribution explicitly algorithmically (see Section~\ref{sec:numerical_experiments} for further details).
Finally, the oscillation $\osc^n$ includes the typical right-hand side control and additionally a corresponding velocity oscillation control.

\begin{remark}
In comparison to the analysis on stationary surfaces in \cite{KL25} there are additional non-negligible contributions, in particular $\zeta_\tau^n$ and $\zeta^n_\textnormal{move}$.
But if we assume that $v = 0$ and thus both $v_h = 0$ and $\kappa^n = 0$ for all $n$, all indicators almost directly collapse to their stationary variant, see, f.ex.\, the spatial indicator \eqref{eq:indicator - spatial}. 
The only contribution which does not readily collapses is the higher order $\mathcal{G}^n_v$ which however, trivially vanishes by analysing the corresponding residual \eqref{eq:res_decom}. 
\end{remark}

\begin{remark}
Note that $\kappa^n$ is not directly computable, or at least not without considerable computational effort. Therefore, instead of working with $\kappa^n$ itself in the numerical experiments of Section~\ref{sec:numerical_experiments}, we assume that the velocity oscillation is resolved well enough such that $\|\nabla_{\Ga_h^n} \cdot V_h^n\|_{L^\infty} + \|\mathcal{B}_h^n\|_{L^\infty}$ provides a reliable approximation of $\kappa^n$ in \eqref{eq:kappa} on each time subinterval. This corresponds to a standard oscillation-type argument: the involved velocity-dependent quantities are assumed to vary only mildly within each timestep and are therefore well approximated by their values at the discrete time levels.
\end{remark}

\subsection{Main result: reliability and efficiency} 
We relate the error indicators \eqref{eq:indicator} to the errors, showing the reliability for the full indicator and efficiency for the consistency parts of the PDE. 
\begin{theorem}
\label{thm:upper_lower}
	Let $h \leq h_0$ and $\tau \leq \tau_0$, with sufficiently small $h_0 > 0$ and $\tau_0 >0$, the residual-based error estimator $\eta^n$ of \eqref{eq:indicator}, and the error between the solution $u$ of \eqref{eq:weak_form} and the numerical approximation $u_{h,\tau}$ \eqref{eq:lifted_discrete_sol}, obtained via \eqref{eq:fulldiscreteheat}, satisfies the following estimates for $0 < t^n = \tau^1 + \dotsb + \tau^n \leq T$:
	
	(a) A global upper bound in space and time (reliability up to oscillation):
	\begin{subequations}
		\begin{align}
			\label{eq:reliability estimate}
			\|u-u_{h,\tau} \|_{X(0,t^n)} \leq c^\star C_\kappa \left(\sum_{j=1}^n (\eta^j)^2 + \|\osc^j \|_{L^2(t^{j-1},t^j; H^{-1} (\Ga(\cdot)))}^2 + \|u^0-(u_h^0)^\ell \|_{L^2(\Ga^0)}^2 \right)^{\frac{1}{2}} . \\
		\intertext{\indent (b) A lower bound which is global in space and local in time (efficiency up to oscillation, geometric, coarsening and mesh-transfer defects, and the velocity scaled temporal indicator):}
		\label{eq:efficiency estimate}
			\eta^n \leq  c_\star C_\kappa^n \Big( \|u-u_{h,\tau} \|_{X(t^{n-1},t^n)} +\|\osc^n \|_{L^2(t^{n-1},t^n; H^{-1} (\Ga(\cdot)))} +  (\tau^n)^{\frac{1}{2}} \big(\mathcal{G}_h^n+\mathcal{G}_v^n + \eta_\textnormal{c}^n + \eta_\textnormal{trans}^n  + \zeta_\tau^n + \zeta_\textnormal{move}^n \big) \Big). 
		\end{align}
	\end{subequations}
	The constants $c_\star > 0$ and $c^\star > 0$ are independent of $h$, $t^n$, and $\tau^n$, but depend on the shape-regularity constant $\varrho^n$ of $\Ga_h^n$, and on $\Ga$. The constant $c^\star$ additionally depends on the shape-regularity constants $\varrho^j$ of the prior meshes $\Ga_h^j$. The constants $C_\kappa, C_\kappa^n$ are given in \eqref{eq:kappa}.
\end{theorem}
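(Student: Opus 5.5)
The proof of Theorem~\ref{thm:upper_lower} will rest on Proposition~\ref{prop:equiv_error_res}. For (a) we apply \eqref{eq:res_equiv_2} on $(0,t^n)$. This reduces the task to bounding $\|\mathcal{R}(u_{h,\tau})\|_{L^2(0,t^n;H^{-1})}$ interval by interval.

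On $(t^{n-1},t^n]$ we decompose the residual \eqref{eq:res} by inserting the discontinuous interpolant $\baruhtaulift$ and the fully discrete equation \eqref{eq:fulldiscreteheat}. The decomposition is
\begin{equation*}
\mathcal{R} = \mathcal{R}_h + \mathcal{R}_\tau + \mathcal{R}_{\tau,v} + \mathcal{R}_{\textnormal{move}} + \Rcoarse + \mathcal{R}_{\textnormal{trans}} + \mathcal{R}_{\mathcal{G}} + \osc^n .
\end{equation*}
The pieces are as follows.
\begin{itemize}
\item The spatial residual $\mathcal{R}_h$ is the discrete residual at $t^n$ tested against $w-\Pi_h w$, with $\Pi_h$ a Scott--Zhang/Clément interpolant on the lifted mesh. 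Elementwise Green's formula \eqref{eq:green_formula} produces the jump and element terms of \eqref{eq:indicator - spatial}.
\item The temporal residual $\mathcal{R}_\tau$ comes from $a^t(\baruhtaulift-\underline{U_h^n}^{\ell},w)$. Since $\baruhtaulift-\underline{U_h^n}^\ell=\frac{t-t^n}{\tau^n}(U_h^n-\IntRef U_h^{n-1})$, it gives $\eta_\tau^n$. The analogous $g$-term gives $\zeta_\tau^n$.
\item The movement residual $\mathcal{R}_{\textnormal{move}}$ compares the discrete forms at $t$ with those at $t^n$. It is treated by writing the difference as $\int_t^{t^n}\frac{\d}{\d s}$ of the forms and using \eqref{eq:form_derivatives} with $\partialmh$ for the transported quantities, where $\partialmh\Phi_j=0$ and $\partialmh\baruhtaulift=0$. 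The $g$- and $b$-terms are bounded by $\kappa^n\tau^n$ times the discrete residual and $|U_h^n|_{H^1}$, which yields $\zeta_{\textnormal{move}}^n$. A Gronwall-type exponential produces $C_\kappa^n$.
\item The coarsening and transfer residuals come from $\partialm u_{h,\tau}-\partialmh\baruhtaulift$, i.e.\ from the difference between $\underline{U_h^{n-1}}^{\ell^{n-1}}$ and $\underline{\IntRef U_h^{n-1}}^{\ell^n}$. We pass through the common space $S_h^{\comesh}(t)$ using $\mathcal{I}^{\comesh}_{n}$ and $\mathcal{I}^{\comesh}_{n-1}$. On coarsened elements this gives $\eta_{\textnormal{c}}^n$ (with weight $1/\tau^n$ from the time derivative and weight $\kappa^n$ from $g$). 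On the remaining refined or coarsened elements, where the two lifts differ, the interpolation error of Lagrange interpolation under an $\mathcal{O}(h_T^2)$ lift perturbation gives $\eta_{\textnormal{trans}}^n$.
\item The geometric residual $\mathcal{R}_{\mathcal{G}}$ is controlled by Proposition~\ref{prop:geo_err_bilin_form}, giving $\mathcal{G}_h^n$ and $\mathcal{G}_v^n$.
\item Whatever remains is $\osc^n$.
\end{itemize}
Summing the squared $H^{-1}$ bounds times $\tau^n$ gives (a).

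For (b) we use \eqref{eq:res_leq_error} localized to $(t^{n-1},t^n)$. This follows from the same argument restricted to the subinterval, with a localized Gronwall giving $C_\kappa^n$. We then use Verfürth's bubble-function technique. Testing with element bubbles times the element residual, and with edge bubbles times the jumps, both lifted to $\Ga(t)$ and multiplied by a temporal cutoff, bounds $\eta_h^n$ by $\|\mathcal{R}\|_{L^2(H^{-1})}$ plus the remaining residual components. Those components are exactly the terms on the right of \eqref{eq:efficiency estimate} that are excluded from efficiency.

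For $\eta_\tau^n$ we test with $w=\frac{t-t^{n-1}}{\tau^n}\,(U_h^n-\IntRef U_h^{n-1})^{\ell}$, following \cite{Verfuerth2003,KL25}. This exploits that $\int(t-t^n)(t-t^{n-1})\d t$ has a fixed sign, so $\tau^n(\eta_\tau^n)^2$ is bounded by the residual, $\eta_h^n$, and the excluded terms. Combining these bounds with \eqref{eq:res_leq_error} gives (b).

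The main obstacle is the coarsening/transfer step, namely controlling $\partialm u_{h,\tau}$. The functions $\underline{U_h^{n-1}}^{\ell^{n-1}(t)}$ and $\underline{\IntRef U_h^{n-1}}^{\ell^n(t)}$ live on different lifted meshes whose nodes align only because of the construction in Section~\ref{sec:smallest_common_triangulation}. I would first reduce to a single time $t$ via the flow. I would then show that on non-coarsened elements the two lifted functions agree up to Lagrange interpolation of a piecewise linear function on an $\mathcal{O}(h_T^2)$-perturbed element. This would give the weights $h_T^2/\tau^n$, $h_T^2\kappa^n$, and $h_T$ appearing in \eqref{eq:indicator - refinement}.
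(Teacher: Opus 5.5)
Your overall architecture (error--residual equivalence, splitting off $\Rcoarse$ via $\baruhtaulift$, transporting test functions along $v_h$ to $t^n$, bubbles plus a temporal test function for efficiency) is the paper's. However, the residual splitting in (a) has a hole. You work with $v_h$-based quantities ($\partialmh\baruhtaulift$, and the Leibniz formula along $v_h$ in the movement residual). To get there you must replace $m^t(\partialm\baruhtaulift,w)+g^t(v;\baruhtaulift,w)$ by their $v_h$-counterparts, and this leaves
\[
m^t\big((v-v_h)\cdot\nabla_{\Ga(t)}\baruhtaulift+(\nabla_{\Ga(t)}\cdot(v-v_h))\,\baruhtaulift,\,w\big).
\]
None of your residuals covers this term:
\begin{itemize}
\item It is not a coarsening/transfer term. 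You put all of $\partialm u_{h,\tau}-\partialmh\baruhtaulift$ into $\Rcoarse$, but this piece is nonzero even when no element changes.
\item It is not part of $\osc^n$ as defined in \eqref{eq:oscillation}.
\item Proposition~\ref{prop:geo_err_bilin_form} does not cover it either. That proposition compares $v_h$-forms on $\Ga(t)$ with $V_h$-forms on $\Ga_h(t)$, not $v$ with $v_h$.
\end{itemize}
Estimating the divergence part directly only gives $\mathcal{O}(h)\,\|\baruhtaulift\|_{L^2}$, because $\nabla_{\Ga}(v-v_h)$ is merely $\mathcal{O}(h)$. The paper instead writes the sum as $m^t(\nabla_{\Ga(t)}\cdot[(v-v_h)\baruhtaulift],w)$ and integrates by parts to obtain $\langle\mathcal{R}_v,w\rangle=-m^t((v-v_h)\baruhtaulift,\nabla_{\Ga(t)}w)$. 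It then uses $\|v-v_h\|_{L^\infty}\le ch^2$. This, not Proposition~\ref{prop:geo_err_bilin_form}, is where $\mathcal{G}_v^n$ comes from.

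In (b), you assert that after bubble testing the leftover residuals are exactly the excluded terms. As stated this is false: $\mathcal{R}_\tau$ is among them and contributes $(\tau^n)^{1/2}\eta_\tau^n$, which is not excluded. Your temporal test in turn bounds $\eta_\tau^n$ by a quantity containing $\eta_h^n$. So the two lower bounds feed into each other, and combining them would need a product of constants below one, which you do not have. You need an explicit decoupling, which is what the paper's \eqref{eq:final_spatial_bound} provides. One way:
\begin{itemize}
\item Transport the bubble test function along $v_h$, so that $W^n=\psi(t)B$ with $B$ fixed.
\item Take a temporal weight with $\int_{t^{n-1}}^{t^n}\psi(t)(t-t^n)\,\mathrm{d}t=0\neq\int_{t^{n-1}}^{t^n}\psi\,\mathrm{d}t$, e.g.\ $\psi=3\frac{t-t^{n-1}}{\tau^n}-1$.
\end{itemize}
Then $\int\langle\mathcal{R}_\tau,w\rangle\,\mathrm{d}t=0$, whereas $\langle\mathcal{R}_h,w\rangle$ is $\psi(t)$ times a fixed quantity controlled from below by the bubbles.

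Finally, the $H^1$-norm of your temporal test function contains $\|U_h^n-\IntRef U_h^{n-1}\|_{L^2}$, which the seminorm $\eta_\tau^n$ does not control. The paper first subtracts the mean, which is allowed since $\mathcal{R}_\tau$ only involves $a^n$. It then applies Poincar\'e's inequality before dividing by $\eta_\tau^n$.
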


Theorem~\ref{thm:upper_lower} will be proved in the subsequent section.

\begin{remark}
\label{remark:h0 sufficiently small}
	In an adaptive setting the assumption $h \leq h_0$ might seem counter-intuitive, as it restricts coarsening. It is, however, inevitable to ensure that the closest point projection \eqref{eq:uniquedecom} is unique. It is further required by all geometric approximations, see, e.g.\, ~\cite{Dziuk2013FiniteEM}. The constant $h_0$ solely depends on the curvature of $\Ga(t)$, and it enforces that throughout the adaptivity one cannot coarsen beyond some suitable triangulation, where the lift is bijective.
\end{remark} 

\begin{remark}
\label{remark:Geometric term and coarsening indicator}
The above theorem does not include efficiency for the high-order geometric terms, coarsening, mesh-transfer and velocity-induced indicators. As stated in \cite[Section~4.2]{Camacho2014} the geometric term arising from the stiffness term \eqref{eq:res_decom} is not the main concern when dealing with convergence and optimality of an adaptive algorithm. As \cite[Lemma~5.8 \& Chapter~6.1]{Bonito2013}  suggests they can be handled utilising an additional adaptive routine to guarantee that the geometric errors are bounded by the spatial indicator, which infers that the lower bound \eqref{eq:efficiency estimate} holds up to oscillation. The coarsening indicator and the second term of the mesh-transfer indicator, both introduced by the coarsening residual, can be made arbitrarily small by coarsening less. 
Note that the first term of the mesh-transfer indicator \eqref{eq:indicator - refinement} only contributes for elements being refined between timesteps, which is, similar to coarsening, but more involved, algorithmically controllable (see the Algorithm described in Section~\ref{sec:numerical_experiments}).
The velocity-induced terms, in particular $\zeta_\tau$ and $\zeta_{\textnormal{move}}$, yield non-neglectable contributions to the error. We can however control these quantities by matching the temporal stepsize (based on the indicators) to the velocity divergence. The contributions are directly related to the correct mass transport and we argue that sufficient control of these indicators allows us to recover the efficiency for the consistency error of the PDE.
\end{remark}

\section{Proof of the main result}
\label{ch:proof}
The proof consists of two parts: first showing the equivalence of error and residual in Section~\ref{sec:equiv_err_res}, and then bounding the residual in terms of the indicators in the subsequent sections.
The residual bounds are simplified by splitting the residual (see Section~\ref{sec:decomposition}) into different components, related to different error sources of the discretization.
In particular, the typical spatial and temporal residual, closely related to the PDE error of the discretization on the discrete domain; the geometric residual collecting errors arsing due to the polyhedral approximation of the surface, the novel flow-induced indicators arising due to the movement of the surface, and the coarsening and mesh-transfer indicators arising due to exchange between the two time-interpolations of Section~\ref{ch:subsec:full_dis_time_int} required for computability.
After stating the norm equivalence under surface evolution in Section~\ref{sec:relating_residual_ind}, which will be used to move functions from continuous to discrete timesteps and vice versa, some residual bounds, in particular for spatial and geometric residuals are easily extendible from the stationary analysis (see Sections~\ref{sec:res_spatial}\& \ref{sec:res_geo}).
Afterwards we focus on the novelties emerging by the surface evolution, namely the movement residual in Section~\ref{sec:res_move}, which requires a substantially different approach to show reliability, and the analysis of the coarsening residual in Section~\ref{sec:res_coarse}, which requires a careful setup with tools introduced in Section~\ref{sec:smallest_common_triangulation}.
The final Section~\ref{sec:res_temp} on temporal indicators combines the prior bounds to finish the proof. 
We highlight that for all upcoming results, we tagged bounds by the labels $(a)$ or $(b)$ indicating that these results are explicitly used to show \eqref{eq:reliability estimate} and \eqref{eq:efficiency estimate} respectively.

As our main result, many results hold for a mesh size $h \leq h_0$, which is always understood with a sufficiently small $h_0 > 0$, see Remark~\ref{remark:h0 sufficiently small}. Additionally we assume that $\tau \leq \tau_0$ with a sufficiently small $\tau_0$ which allows us absorb higher-order contributions which are asymptotically neglectable.

\begingroup
\footnotesize
\setlength{\tabcolsep}{4pt}
\renewcommand{\arraystretch}{1.08}
\begin{longtable}{|p{0.35\textwidth}|p{0.40\textwidth}|p{0.15\textwidth}|}
\hline
\textbf{Description} & \textbf{Notation} & \textbf{Defined in} \\
\hline
\endfirsthead

\hline
\textbf{Description} & \textbf{Notation} & \textbf{Defined in} \\
\hline
\endhead

Evolving surface & $\Gamma(t)$ & Section~\ref{sec:prelim} \\
\hline
Discrete surface at timestep $t^n$ & $\Gamma_h^n$ & Section~\ref{ch:subsec:full_dis_time_int} \\
\hline
Lift of point/function & $X = x^{\ell}$, $W_h = w_h^{\ell}$ & Section~\ref{sec:semidiscreteESFEM} \\
\hline
Space--time manifold & $Q_T := \bigcup_{t\in[0,T]}\Gamma(t)\times\{t\}$ & Section~\ref{sec:prelim} \\
\hline
Two-time flow map & $G(x,t,s):\Gamma(t)\to\Gamma(s)$ & Section~\ref{sec:prelim} \\
\hline
Surface velocity on $\Gamma(t)$ & $v(x,t)$ & \eqref{eq:flow_ode} \\
\hline
Discrete material velocity on $\Gamma_h(t)$ & $V_h(X,t)$ & Section~\ref{sec:semidiscreteESFEM} \\
\hline
Induced discrete velocity on $\Gamma(t)$ & $v_h(x,t)$ & Section~\ref{ch:subsec:full_dis_time_int} \\
\hline
Velocity bound and related constants & $\kappa$, $\kappa^n$, $C_\kappa=e^{\kappa T}$, $C_\kappa^n=e^{\kappa^n\tau^n}$ & \eqref{eq:kappa} and below \\
\hline
Fully discrete solution & $U_h^n(x)$ & \eqref{eq:fulldiscreteheat} \\
\hline
Constant-in-flow extension & $\underline{W_h^n}(x,t)$ & Section~\ref{ch:subsec:full_dis_time_int} \\[0.6ex]
\hline
Continous time interpolation & $u_{h,\tau}(x,t)$ & \eqref{eq:lifted_discrete_sol} \\
\hline 
\rule{0pt}{2.8ex} Discrete time interpolation purely on $\Ga_h^n(s)$ & $\baruhtau (x,t,s)$ & \eqref{eq:fully discrete solution definition} \\
\hline
Refinement interpolation operator & $\mathcal{I}_{\mathrm{ref}}^n:S_h^{n-1}\to S_h^n$ & Section~\ref{ch:subsec:full_dis_time_int} \\
\hline
Continuous bilinear forms at time $t$ & $m^t(w,\phi), a^t(w,\phi), g^t(v;w,\phi), b^t(v;w,\phi)$ & Section~\ref{sec:def_bilinear_forms} \\
\hline
Discrete bilinear forms at time $t$ & $m_h^t(W,\Phi), a_h^t(W,\Phi), g_h^t(V_h;W,\Phi), b^t(V_h;W,\Phi)$  & Section~\ref{sec:def_bilinear_forms} \\
\hline
Smallest common refinement & $\Gamma_h^{\comesh}(t)=\Gamma_h^{n-1}(t)\oplus\Gamma_h^n(t)$ & Section~\ref{sec:smallest_common_triangulation} \\
\hline
Interpolation to common mesh & $\mathcal I_k^{\comesh}:S_h^k(t)\to S_h^{\comesh}(t)$ & Section~\ref{sec:smallest_common_triangulation} \\
\hline
\end{longtable}
\endgroup
\subsection{Equivalence of error and residual}
\label{sec:equiv_err_res}
We start with the proof of Proposition~\ref{prop:equiv_error_res}. 
As discussed in \cite{KL25} we have to analyse the residual with respect to the continuous in time $u_{h,\tau}$ to be able to employ standard arguments when showing the equivalence of residual and error.

\begin{proof}

	Denote the error by $e(t) := u(t)- u_{h,\tau}(t)$. For notational convenience, we drop the explicit surface dependence on $\Gamma(t)$ and $\Gamma(0)$ in the norms.
	
	(a) 
	 We utilise energy estimates to show the upper bound. Test the residual equation \eqref{eq:res} of $\mathcal{R}(e)$ with $w = e$ further note that $\mathcal{R}(u) = 0$.
	We rearrange the residual-based on \eqref{eq:form_derivatives} using $m^t(\partialm e,e) = \frac{1}{2} \frac{\d}{\d t} m^t(e,e) - \frac{1}{2} g^t(v;e,e)$, standard estimates, an absorption of the $H^1$-seminorm, and bounding the velocity-divergence by $\kappa$ (see \eqref{eq:kappa}) then gives
	\begin{align*}
	\frac{\d}{\d t} \|e\|^2_{L^2} + |e|^2_{H^1} \leq \|\mathcal{R}(u_{h,\tau}) \|_{H^{-1}}^2 +  (1+\kappa)\|e\|_{L^2}^2.
	\end{align*}
	Now we integrate over the time interval $(0,t)$
	\begin{align}
	\label{eq:equivalence_proof_eq1}
	\|e(t)\|^2_{L^2} + \int_0^t |e|^2_{H^1} \leq \|\mathcal{R}(u_{h,\tau}) \|_{L^2(0,t;H^{-1})}^2 + \|e(0)\|_{L^2}^2 + (1+\kappa) \int_0^t \|e\|_{L^2}^2.
	\end{align}
	Applying Gronwall inequality, yields the $L^\infty(L^2)$-bound
	\begin{align*}
	\|e\|^2_{L^\infty(0,t;L^2)} \leq \exp(t(1+\kappa))(\|\mathcal{R}(u_{h,\tau}) \|_{L^2(0,t;H^{-1})}^2 + \|e(0)\|^2_{L^2}).
	\end{align*}
	We can immediately follow $\|e\|^2_{L^2(0,t;L^2)} \leq T \|e\|^2_{L^\infty(0,t;L^2)}$ and thus from \eqref{eq:equivalence_proof_eq1} that $|e|^2_{L^2(0,t;H^1)}$ is also bound by the residual and initial error. Together we obtain the bound for the full $H^1$-norm. 
	
	Finally, we bound the material derivative of the error, from \eqref{eq:res} we can bound 
	\begin{align*}
	\|\partialm e \|_{H^{-1}} \leq  \|\mathcal{R}(u_{h,\tau})\|_{H^{-1}} + |e|_{H^1} + \kappa \|e\|_{L^2}. 
	\end{align*}
	Now time integration and the prior results for the $L^2(0,t;H^1)$-norm yields the upper bound.
	
	(b) Again starting from $\langle \mathcal{R} (e),w \rangle$ we employ duality and Cauchy--Schwarz inequalities and integrate over time to obtain
	\begin{align*}
	\int_0^T  \langle \mathcal{R} (u_{h,\tau}(s)),w(s) \rangle \d s = \int_0^T  \langle \mathcal{R} (e(s)),w(s) \rangle \d s \leq \big(\|\partialm e\|_{L^2(0,T;H^{-1})} + (1+\kappa) \|e\|_{L^2(0,T;H^{1})}\big) \|w\|_{L^2(0,T;H^1)}.
	\end{align*}
	The right-hand side of the inequality is controlled by the graph norm \eqref{eq:graph_norm} thus finishing the proof. \hfill
\end{proof}

\subsection{Decompositions}
\label{sec:decomposition}
Now that we established equivalence statements for error and residual we are able to focus on bounding the residual.
We continue by following the typical parabolic residual-based analysis, see, e.g.\, \cite{Verfuerth2003}, by splitting the residual in spatial and temporal contributions. Many decomposition steps are on par with \cite[Section~5.2]{KL25} where we step by step exchange continous to discrete quantities.

The continuity of $u_{h,\tau}$ was required for Proposition~\ref{prop:equiv_error_res} but it is difficult to employ standard arguments when we deal with a function which contains two different lifts. On the other hand $\baruhtaulift(x,t)$ can be written with a singular lift of the fully discrete solution \eqref{eq:fully discrete solution definition}, and exchanging the two interpolations naturally introduces a coarsening type residual: 
\begin{align}
\label{eq:residual_coarsening_decom}
	\langle \mathcal{R}(u_{h,\tau}(x,t)),w\rangle &= \langle \mathcal{R}(u_{h,\tau}(x,t)-\baruhtaulift(x,t)),w\rangle + \langle \mathcal{R}(\baruhtaulift(x,t)),w\rangle \nonumber \\
	&= \langle \Rcoarse, w \rangle +  \langle \mathcal{R}(\baruhtaulift(x,t)),w\rangle.
\end{align}
We deal with the coarsening later and focus on the other term first. 

 We start by analysing the residual \eqref{eq:res} evaluated at $\baruhtaulift(x,t)$ and insert the full discretization \eqref{eq:fulldiscreteheat} with an appropriate test function $W^n$.  For later analysis this test function has to be defined very carefully.
First assume that for some $t \in (t^{n-1},t^n]$ we extended a general $w \in H^1(\Gamma(t))$ to a function $\underline{w}$ defined on the space--time slab 
$$Q_{t^{n-1},t^n} :=\bigcup_{s \in (t^{n-1},t^n]} \Ga(s) \times \{s\},$$
such that $\partialmh \underline{w} = 0$, i.e.\ extending constantly along the flow induced by $v_h$ similar to the extension of finite element functions in Section~\ref{ch:subsec:full_dis_time_int}. Notice that the extension and lift do not commute.
Forcing this property for the discrete flow instead of the continuous flow simplifies later bounds. The extension allows us to choose $(\underline{w}(t^n))^{-\ell^n}=:W^n \in H^1(\Gamma_h^n)$, which we use as a general test function to the full discrete problem \eqref{eq:fulldiscreteheat}.  

\subsubsection{Residual decomposition}
We start by decomposing the residual: first we add the full discrete equation but tested with a general function in $H^1(\Ga_h^n)$, and afterwards analyse the differences of each discrete form to its continous variant. Note that if the arguments of the bilinear forms are evaluated at the same time (i.e.\ not flown from another time), we suppress the additional time dependencies. This results for $w \in H^1(\Ga(t)$ in  
\begin{align}
	\label{eq:res_pm_fulldis}
	\langle \mathcal{R}(\baruhtaulift(x,t)),w\rangle &=  m_h ^n(\partial_h ^\bullet \baruhtau - F_h,W) + g_h ^n(V_h;U_h,W) + a_h ^n(U_h,W) \tag*{$(\mathcal{R}_h)$} \\
	&\hspace{1.5em}+m^t(\partialm \baruhtaulift,w) - m_h ^n(\partialmh  \baruhtau,W)  \tag*{$(I)$} \\
	&\hspace{1.5em}+g^t(v;\baruhtaulift,w) - g_h ^n(V_h;U_h,W) \tag*{$(II)$}\\
	&\hspace{1.5em}+a^t(\baruhtaulift,w) - a_h ^n(U_h,W) \tag*{$(III)$} \\
	&\hspace{1.5em}-m^t(f,w) + m_h^n(F_h,W), \tag*{$(IV)$}
\end{align}
where $W^n = (\underline{w}(t^n))^{-\ell^n}$.
This immediately gives the spatial residual $\mathcal{R}_h$ tested with $W^n$. 
Due to the time dependency and the difference in arguments, e.g.\, $v$ vs. $V_h$, we insert zeros and collect the resulting terms as follows
\begin{align*}
I \pm m^t(\partial_h ^\bullet \baruhtaulift,w)&= \underbrace{m^t((\partialm -\partialmh) \baruhtaulift,w)}_{I.v}+\underbrace{m^t(\partial_h ^\bullet \baruhtaulift,w)- m_h ^n(\partial_h ^\bullet \baruhtau,W)}_{I.a}, \\
II \pm g^t(v_h;\baruhtaulift,w) &=  \underbrace{g^t(v-v_h;\baruhtaulift,w)}_{II.v} + \underbrace{g ^t(v_h;\baruhtaulift,w) - g_h ^n(V_h;U_h,W)}_{II.a}.
\end{align*}
Next we extract the oscillation-type terms which result from discretizing the general function $f$ and the velocity $v$ thus
\begin{align*}
II.a \pm m^t((\underline{\nabla_{\Ga_h^n} \cdot V_h^n})^{\ell^n(t)} \baruhtaulift,w) &= \underbrace{m^t\Big(\big((\nabla_{\Ga} \cdot v_h)-(\underline{\nabla_{\Ga_h^n} \cdot V_h^n})^{\ell^n(t)} \big) \baruhtaulift,w\Big)}_{II.\text{osc}} \\
&+ \underbrace{m^t((\underline{\nabla_{\Ga_h^n} \cdot V_h^n})^{\ell^n(t)} \baruhtaulift,w) - g_h ^n(V_h;U_h,W)}_{II.b}, \\
IV \pm m^t((\underline{F_h^n})^{\ell^n(t)},w) &= \underbrace{m^t(f-(\underline{F_h^n})^{\ell^n(t)},w)}_{IV.\text{osc}} + \underbrace{m^t((\underline{F_h^n})^{\ell^n(t)},w) - m_h^n(F_h,W)}_{IV.a}.
\end{align*}
The next step is to move from the continuous domain at time $t$ to the discrete time domain at $t^n$, to establish this we introduce additional bilinear forms. Note that we often immediately obtain geometric error terms:
\begin{align*}
I.a \pm m^n(\partial_h ^\bullet \baruhtaulift,\underline{w}) &= \underbrace{m^t(\partial_h^\bullet \baruhtaulift,w) -m^n(\partial_h ^\bullet \baruhtaulift,\underline{w}) }_{I.\textnormal{move}}+\underbrace{m^n(\partial_h ^\bullet \baruhtaulift,\underline{w})- m_h ^n(\partial_h ^\bullet \baruhtau,W)}_{I.\text{geo}}, \\
II.b \pm m^n((\nabla_{\Ga_h^n} \cdot V_h^n)^{\ell^n} \baruhtaulift(\cdot,t,t^n),\underline{w}) &= \underbrace{m^t((\underline{\nabla_{\Ga_h^n} \cdot V_h^n})^{\ell^n(t)} \baruhtaulift(\cdot,t^n),w) -m^n((\nabla_{\Ga_h^n} \cdot V_h^n)^{\ell^n} \baruhtaulift(\cdot,t,t^n),\underline{w}) }_{II.\textnormal{move}} \\
\qquad &+\underbrace{m^n((\nabla_{\Ga_h^n} \cdot V_h^n)^{\ell^n} \baruhtaulift(\cdot,\cdot,t^n),\underline{w})- g_h ^n(V_h;U_h,W)}_{II.c}, \\
III \pm a^n(\baruhtaulift(\cdot,t,t^n),\underline{w}) &= \underbrace{a^t(\baruhtaulift(\cdot,t^n),w)-a^n(\baruhtaulift(\cdot,t,t^n),\underline{w})}_{III.\textnormal{move}}+\underbrace{a^n(\baruhtaulift(\cdot,t,t^n),\underline{w})- a_h ^n(U_h,W)}_{III.c}, \\
IV.a \pm m^n(f_h,\underline{w}) &= \underbrace{m^t((\underline{F_h^n})^{\ell^n(t)},w)  -m^n(f_h,\underline{w}) }_{IV.\textnormal{move}}+\underbrace{m^n(f_h,\underline{w})- m_h^n(F_h,W)}_{IV.\text{geo}}. 
\end{align*} 
Finally we have to resolve the time dependency in $II.c$ and $III.c$ which naturally introduces temporal-type errors
\begin{align*}
II.c \pm m^n((\nabla_{\Ga_h^n} \cdot V_h^n)^{\ell^n} u_h,\underline{w}) &= \underbrace{m^n((\nabla_{\Ga_h^n} \cdot V_h^n)^{\ell^n} (\baruhtaulift(\cdot,t,t^n) - u_h),\underline{w}) }_{\mathcal{R}^\star_\tau}+\underbrace{m^n((\nabla_{\Ga_h^n} \cdot V_h^n)^{\ell^n} u_h,\underline{w})- g_h ^n(V_h; U_h ,W)}_{II.\text{geo}}, \\
III.c \pm a^n(u_h,\underline{w}) &= \underbrace{a^n(\baruhtaulift(\cdot,t,t^n) - u_h,\underline{w})}_{\mathcal{R}_\tau}+\underbrace{a^n(u_h,\underline{w})- a_h ^n(U_h,W)}_{III.\text{geo}}.
\end{align*}

Before stating the residuals, we simplify the terms $I.v$ and $II.v$ by using $\partialm w -\partialmh w = (v-v_h) \cdot \nabla_{\Gamma} w$ which holds for $w \in H^1(\Ga(t))$ by \cite[Lemma 4.1]{Dziuk2013FiniteEM} and partial integration
\begin{align*}
	I.v+II.v &= m^t((v-v_h) \cdot \nabla_{\Ga(t)} \baruhtaulift + [\nabla_{\Ga(t)} \cdot (v-v_h)] \baruhtaulift,w)  \\
	 &= m^t (\nabla_{\Ga(t)} \cdot [(v-v_h) \baruhtaulift],w) = \underbrace{-m^t((v-v_h)\baruhtaulift,\nabla_{\Ga(t)}w)}_{\mathcal{R}_v},
\end{align*}

Using all of the above reformulations, we gather the residuals
\begin{equation}
\label{eq:res_decom}
\begin{aligned}
	\langle \mathcal{R}_h,w \rangle &= m_h ^n(\partial_h ^\bullet \baruhtau,W) + g_h ^n(V_h;U_h,W) + a_h ^n(U_h,W), \\
	\langle \mathcal{R}_\tau,w \rangle &=  a^n(\baruhtaulift(x,t,t^n)-u_h,\underline{w}), \\
	\langle \mathcal{R}^\star_\tau,w \rangle &=  m^n((\nabla_{\Ga_h^n} \cdot V_h^n)^{\ell^n} (\baruhtaulift(x,t,t^n) - u_h),\underline{w}), \\
	\langle \mathcal{R}_\text{geo},w \rangle &= m^n(\partial_h^\bullet \baruhtaulift - f_h,\underline{w}) - m_h^n(\partial_h^\bullet \baruhtau - F_h,\underline{W}) \\
	&\qquad+m^n((\nabla_{\Ga_h^n} \cdot V_h^n)^{\ell^n},u_h,\underline{w})-g_h ^n(V_h;U_h,W) +a^n(u_h,\underline{w})-a_h^n(U_h,W), \\
	\langle \mathcal{R}_\textnormal{move},w \rangle &= m^t(\partial_h^\bullet \baruhtaulift,w) -m^n(\partial_h ^\bullet \baruhtaulift,\underline{w}) +m^t((\underline{F_h^n})^{\ell^n(t)},w)  -m^n(f_h,\underline{w}) \\
	&\qquad + m^t((\underline{\nabla_{\Ga_h^n} \cdot V_h^n})^{\ell^n(t)} \baruhtaulift,w) -m^n((\nabla_{\Ga_h^n} \cdot V_h^n)^{\ell^n} \baruhtaulift(x,t,t^n),\underline{w}) \\
	 &\qquad + a^t(\baruhtaulift,w)-a^n(\baruhtaulift(x,t,t^n),\underline{w}),\\
	\langle \mathcal{R}_v ,w \rangle &= m^t(-(v-v_h) \baruhtaulift, \nabla_{\Ga(t)} w).
\end{aligned}
\end{equation}
Combined with the coarsening residual $\mathcal{R}_c$ from \eqref{eq:residual_coarsening_decom} and the oscillation $\langle \osc^n ,w \rangle$ from \eqref{eq:oscillation}, the above residuals sum up to the exact residual \eqref{eq:res}.

For stationary surfaces, these residuals coincide with the residuals in \cite[Section~5.2]{KL25} or vanish due to $v \equiv 0$, except that the temporal residual is not unlifted to the discrete domain as this is not necessary for the upcoming proofs.

\subsection{Relating the residual and indicators}
\label{sec:relating_residual_ind}
Due to the residual decomposition many terms, like the spatial indicator, are closely related to terms of the analysis in \cite{Verfuerth2003, KL25}, or to the geometric residual to \cite[Section~5.4]{KL25}. We highlight the main difficulties which arise for the evolving surface case.

Before bounding each residual separately, we introduce an additional tool to compare $L^2$- and $H^1$-norms for discrete and general times $t \in (t^{n-1},t]$. 
\begin{proposition}[Norm equivalence under movement {\cite[Lemma~3.6]{DzEll12_Fully_Disc_ESFEM}}]
\label{prop:norm_equiv_partialmh}
Given $f$ defined on the space--time slab $Q_{a,b}$ with $\partial_h^\bullet f = 0$, then, for $s,t \in (a,b)$,
\begin{equation*}
\label{eq:norm_equiv_partialmh}
\begin{aligned}
\frac{1}{c(\kappa)} \|f(s)\|^2_{L^2(\Ga(s))} &\leq \|f(t)\|^2_{L^2(\Ga(t))} &&\leq c(\kappa) \|f(s)\|^2_{L^2(\Ga(s))} , \\
\frac{1}{c(\kappa)} |f(s)|^2_{H^1(\Ga(s))}  &\leq  |f(t)|^2_{H^1(\Ga(t))} &&\leq c(\kappa) |f(s)|^2_{H^1(\Ga(s))} .
\end{aligned}
\end{equation*}
The constant depends exponentially on $\kappa|s-t|$.
\end{proposition}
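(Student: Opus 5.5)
The plan is to prove the proposition by a Gronwall argument along the discrete flow. Since $\partial_h^\bullet f = 0$, the Leibniz formula in the discrete-velocity version of \eqref{eq:form_derivatives} (with $v$ replaced by $v_h$ and $\partial^\bullet$ by $\partial_h^\bullet$) gives, for $r \in (a,b)$,
\begin{align*}
\frac{\d}{\d r} m^r(f,f) = 2 m^r(\partial_h^\bullet f, f) + g^r(v_h; f,f) = g^r(v_h;f,f).
\end{align*}
Using the uniform bound $\|\nabla_{\Gamma(r)} \cdot v_h\|_{L^\infty} \leq c\kappa$, which the paper asserts for $h \leq h_0$ after \eqref{eq:kappa} via interpolation estimates, we get $|\frac{\d}{\d r}\|f(r)\|^2_{L^2(\Gamma(r))}| \leq c\kappa \|f(r)\|^2_{L^2(\Gamma(r))}$.

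Integrating this differential inequality between $s$ and $t$, in either order, yields
\begin{align*}
\|f(t)\|^2_{L^2(\Gamma(t))} \leq \exp(c\kappa|t-s|)\,\|f(s)\|^2_{L^2(\Gamma(s))},
\end{align*}
together with the symmetric bound with $s$ and $t$ exchanged. This is the $L^2$ statement with $c(\kappa) = \exp(c\kappa|s-t|)$.

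For the $H^1$-seminorm I would argue identically with the second identity of \eqref{eq:form_derivatives}:
\begin{align*}
\frac{\d}{\d r} a^r(f,f) = 2a^r(\partial_h^\bullet f,f) + b^r(v_h;f,f) = b^r(v_h;f,f).
\end{align*}
This uses that the material derivative commutes with the tangential gradient only up to terms absorbed into $\mathcal{B}(v_h)$, which is precisely how $b$ arises. Bounding $\|\mathcal{B}(v_h)\|_{L^\infty} \leq c\kappa$ then gives $|\frac{\d}{\d r}|f(r)|^2_{H^1}| \leq c\kappa|f(r)|^2_{H^1}$, and Gronwall again closes the argument.

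The main obstacle is regularity. The induced velocity $v_h$ is only piecewise smooth: it is smooth on each lifted curved element but not across element edges. We therefore need to justify the transport formulae for the discrete flow elementwise and check that no interface contributions appear when summing over elements. For the $L^2$ estimate this holds because the elements are transported by the flow of $v_h$ itself. For the $H^1$ estimate I would apply the transport formula on each element separately, then sum, noting that $\partial_h^\bullet f = 0$ and the formulae are local in space. An alternative route that sidesteps this is to pull back to a fixed reference surface via the discrete flow map. In that case the metric tensor and area element evolve with derivatives bounded by $c\kappa$, and the same Gronwall bound follows, giving dependence exponential in $\kappa|s-t|$ as claimed.
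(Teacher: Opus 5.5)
Your proposal is correct and is essentially the argument behind the result, which the paper does not prove itself but cites from \cite[Lemma~3.6]{DzEll12_Fully_Disc_ESFEM}: since $\partial_h^\bullet f = 0$, the transport formulae with velocity $v_h$ reduce to $g^r(v_h;f,f)$ and $b^r(v_h;f,f)$, these are bounded by $c\kappa\|f\|_{L^2}^2$ and $c\kappa|f|_{H^1}^2$ respectively, and Gronwall in both time directions gives the two-sided bounds with constant $\exp(c\kappa|s-t|)$. Your handling of the piecewise smoothness of $v_h$ is adequate, either elementwise (each lifted curved element is itself advected by $v_h$, so no interface terms arise) or via the pull-back to a reference surface.
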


\subsubsection{Spatial residuals}
\label{sec:res_spatial}
First, let us highlight that the spatial and the geometric residuals are fully time independent, thus the stationary analysis is almost directly viable, except that we additionally have to control the evolution of the surface, in particular the different evaluation times of the test function $w$ of the dual norm and the respective pushed-forward $W^n$ in the spatial residual.
\begin{proposition}
\label{prop:spatial_res}
	For $0 < t^n \leq T$ the spatial indicator $\eta_h^n$ \eqref{eq:indicator - spatial} is uniformly equivalent to the dual norm of the spatial residual $\mathcal{R}_h$ in \eqref{eq:res}, i.e., for $t \in (t^{n-1},t^n]$,
	\begin{subequations}
	\begin{align}
		\label{eq:indicator upper bound for R_h}
		\| \mathcal{R}_h(t) \|_{{H^{-1} (\Ga(t))}} \leq &\  c C_\kappa^n \, \eta_h^n , \\
		\label{eq:indicator lower bound for R_h}
		 \frac{c}{C_\kappa^n} \eta_h^n \leq &\ \| \mathcal{R}_h(t) \|_{{H^{-1} (\Ga(t))}}.
	\end{align}
	\end{subequations}
	The constant $c > 0$ is independent of $h$ and $\tau^n$, but depends on the shape-regularity constant $\varrho^n$ of $\Ga_h^n$ and the norm-equivalence constant \eqref{eq:norm_equiv} associated with $\Ga(t)$.
\end{proposition}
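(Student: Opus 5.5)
The plan is to reduce both estimates to the standard Verfürth-type residual analysis on the fixed discrete surface $\Ga_h^n$ at time $t^n$, as in the stationary case \cite{KL25}. The only new ingredient is moving the test function between $\Ga(t)$ and $\Ga_h^n$. Recall that $\langle \mathcal{R}_h,w\rangle$ depends on $w$ only through $W^n=(\underline{w}(t^n))^{-\ell^n}$, where $\underline{w}$ is the extension of $w$ with $\partialmh\underline{w}=0$.

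\textbf{Transfer of the test function.} Proposition~\ref{prop:norm_equiv_partialmh} on the slab $Q_{t^{n-1},t^n}$ gives
\[
\|\underline{w}(t^n)\|_{H^1(\Ga(t^n))}\le c\,\|w\|_{H^1(\Ga(t))},
\]
with a constant exponential in $\kappa^n|t^n-t|\le\kappa^n\tau^n$. This is the source of the factor $C_\kappa^n$. The lift equivalence \eqref{eq:norm_equiv} at $t^n$ then yields $\|W^n\|_{H^1(\Ga_h^n)}\le c\,C_\kappa^n\|w\|_{H^1(\Ga(t))}$. The map $w\mapsto W^n$ is a bijection $H^1(\Ga(t))\to H^1(\Ga_h^n)$: its inverse lifts, extends backwards along $v_h$ and evaluates at $t$. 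The reverse inequality therefore also holds with the same type of constant. Consequently
\[
\|\mathcal{R}_h(t)\|_{H^{-1}(\Ga(t))}\simeq \sup_{W\in H^1(\Ga_h^n)}\frac{\langle \mathcal{R}_h,W\rangle}{\|W\|_{H^1(\Ga_h^n)}},
\]
up to factors $cC_\kappa^n$ in each direction. It thus suffices to prove equivalence of $\eta_h^n$ with the discrete dual norm on $\Ga_h^n$.

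\textbf{Upper bound.} Test the full discrete scheme \eqref{eq:fulldiscreteheat} with the Clément/Scott--Zhang interpolant $I_hW\in S_h^n$. The identity $\partialmh\baruhtau(\cdot,t^n)=(U_h^n-\IntRef U_h^{n-1})/\tau^n$ gives Galerkin orthogonality $\langle\mathcal{R}_h,I_hW\rangle=0$. For $\langle\mathcal{R}_h,W-I_hW\rangle$, integrate by parts elementwise on the flat triangles; $\Delta_T U_h^n=0$ for linear elements. This produces the element residual $\tau^{-1}(U_h^n-\IntRef U_h^{n-1})+(\nabla_T\cdot V_h^n)U_h^n-F_h^n$ and the edge jumps. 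Standard interpolation estimates, with constants depending on $\varrho^n$, then give \eqref{eq:indicator upper bound for R_h}.

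\textbf{Lower bound.} Use Verfürth's bubble technique on $\Ga_h^n$. For each $T$, test with $b_T r_T$, where $r_T$ is the element residual (piecewise polynomial, since $V_h^n,U_h^n$ are linear; assume $F_h^n$ is polynomial too, otherwise the difference is oscillation). For each edge $S$, test with $b_S\llbracket\nabla_TU_h^n\cdot\mathrm{n}_S\rrbracket$, extended to the patch, and subtract the element contributions already controlled. Summing, with finite overlap, bounds $\eta_h^n$ by the discrete dual norm. Transferring back with the bijection above gives \eqref{eq:indicator lower bound for R_h}.

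\textbf{Main obstacle.} I expect the main difficulty to be the first step: checking that the inverse transfer $W\mapsto w$ is admissible and stable uniformly in $h$. This needs the $H^1$ bounds for $v_h$ (from \cite{DziukElliott2013_L2estimates}) so that Proposition~\ref{prop:norm_equiv_partialmh} applies with constant $\exp(c\kappa^n\tau^n)$. The remaining steps are essentially those of \cite{KL25}.
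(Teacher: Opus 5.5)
Your proposal is correct and follows the paper's route: Scott--Zhang interpolation with Galerkin orthogonality for the upper bound, Verf\"urth's bubble functions for the lower bound, and Proposition~\ref{prop:norm_equiv_partialmh} with the lift equivalence \eqref{eq:norm_equiv} to move the test function between $\Ga(t)$ and $\Ga_h^n$, which is where the factor $C_\kappa^n$ comes from. The paper leaves implicit two details you make explicit: the bijectivity of $w\mapsto W^n$ used in the lower bound, and the requirement that $F_h^n$ be piecewise polynomial for the oscillation-free lower bound.
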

\begin{proof}
The proof follows \cite[Proposition~5.6]{KL25} which extended the results of \cite[Section~5]{Verfuerth2003} to surfaces. The key ideas are to introduce the Scott--Zhang interpolation \cite[Section~3]{Camacho2014} of the function $W^n$, and use standard estimates. The lower bound is shown using typical bubble functions arguments \cite{Verfuehrt1996}. The exponential dependence on $\kappa$ arises due to the resulting $|W^n|_{H^1(\Ga_h^n)}$ in the upper bounds which has to be related back to $\|w(t)\|_{H^1(\Ga(t))}$ via Proposition~\ref{prop:norm_equiv_partialmh} to obtain the dual norm. \hfill
\end{proof}

\subsubsection{Geometric residuals}
\label{sec:res_geo}
For some of the indicators, including the geometric residual, we will not be able to incorporate them in the efficiency analysis.
However, we will see that these terms are usually of higher order.
\begin{proposition}
\label{prop:geo_res}
	For $0 < t^n \leq T$ the geometric residuals $\mathcal{R}_{\text{geo}}$, and $\mathcal{R}_v$ are bounded from above by the geometric indicators $\mathcal{G}_h^n$ \eqref{eq:indicator - geo_h} with additional higher order contributions of \eqref{eq:indicator - spatial}, and by $\mathcal{G}_v^n$ \eqref{eq:indicator - geo_v} respectively.
	\begin{align}
		\label{eq:indicator upper bound for R_geo}
		\refstepcounter{equation}\tag{\theequation a}
		\| \mathcal{R}_{\text{geo}} \|_{L^2(t^{n-1},t^n;{H^{-1} (\Ga(\cdot))})}^2 \leq &\ c (C_\kappa^n)^2 \tau^n \, \big((\mathcal{G}_h^n)^2+h^2 (\eta_h^n)^2\big), \\
		\label{eq:indicator upper bound for R_v}
		\refstepcounter{equation}\tag{\theequation a}
		\| \mathcal{R}_{v} \|_{L^2(t^{n-1},t^n;{H^{-1} (\Ga(\cdot))})}^2 \leq &\ c (C_\kappa^n)^2 \tau^n \, (\mathcal{G}_v^n)^2.
	\end{align}
	The constant $c > 0$ is independent of $h$ and $\tau^n$, but depends on $Q_{t^{n-1},t^n}$.
\end{proposition}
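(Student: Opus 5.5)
We estimate both residuals pointwise in time for $t \in (t^{n-1},t^n]$ with a test function $w \in H^1(\Ga(t))$, $\|w\|_{H^1(\Ga(t))}\le 1$. We then square and integrate over the interval, which produces the factor $\tau^n$. The common first step is to move everything to the discrete time level $t^n$. The test function enters $\mathcal{R}_{\text{geo}}$ only through $\underline{w}(t^n)$ and its unlift $W^n$. Since $\partialmh \underline{w}=0$, Proposition~\ref{prop:norm_equiv_partialmh} gives $\|\underline{w}(t^n)\|_{H^1(\Ga(t^n))}\le c\,C_\kappa^n \|w\|_{H^1(\Ga(t))}$. The norm equivalence \eqref{eq:norm_equiv} then transfers this bound to $W^n$ on $\Ga_h^n$. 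This is the only source of the factor $C_\kappa^n$.

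\textbf{Bound for $\mathcal{R}_{\text{geo}}$.} We split $\mathcal{R}_{\text{geo}}$ into its mass part, its divergence part $g$ and its stiffness part $a$. All quantities are now evaluated on $\Ga(t^n)$ against their discrete counterparts on $\Ga_h^n$, with identical (lifted) arguments. The stiffness difference $a^n(u_h,\underline{w})-a_h^n(U_h,W)$ is the standard stationary geometric term. We bound it elementwise as in \cite[Section~5.4]{KL25} and \cite{Demlow2007}, writing it as $\int_{\Ga_h^n} (A_h-P)\nabla U_h\cdot\nabla W$ with $\|A_h-P\|_{L^\infty(T)}\le c h_T^2$. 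This gives $c(\sum_T h_T^4|U_h^n|_{H^1(T)}^2)^{1/2}|W|_{H^1}=c\,\mathcal{G}_h^n|W|_{H^1}$. For the mass and divergence parts we combine them into the single $L^2$-type geometric difference
$$
m^n\bigl(\partialmh\baruhtaulift+(\nabla_{\Ga_h^n}\cdot V_h^n)^{\ell^n}u_h-f_h,\underline{w}\bigr)-m_h^n\bigl(\partialmh\baruhtau+(\nabla_{\Ga_h^n}\cdot V_h^n)U_h-F_h,W\bigr),
$$
which equals $\int_{\Ga_h^n}(1-\delta_h)R_TW$. Here $\delta_h$ is the area element with $|1-\delta_h|\le ch_T^2$, and $R_T$ is exactly the elementwise residual in \eqref{eq:indicator - spatial}. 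We bound this by $c\sum_T h_T^2\|R_T\|_{L^2(T)}\|W\|_{L^2(T)}\le c\,h\,\bigl(\sum_T h_T^2\|R_T\|^2_{L^2(T)}\bigr)^{1/2}\|W\|_{L^2}\le c\,h\,\eta_h^n\|W\|_{L^2}$. This yields the $h^2(\eta_h^n)^2$ contribution after squaring. We deliberately do not use the first bound of Proposition~\ref{prop:geo_err_bilin_form} globally, because it would give $\|R\|_{L^2}$ without the $h_T$-weights, which is not an indicator.

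\textbf{Bound for $\mathcal{R}_v$.} By Cauchy--Schwarz, $|\langle\mathcal{R}_v,w\rangle|\le \|(v-v_h)\baruhtaulift\|_{L^2(\Ga(t))}|w|_{H^1}$. The key ingredient is the elementwise estimate $\|v-v_h\|_{L^\infty(T)}\le c h_T^2$ for the induced discrete velocity, which follows from the interpolation estimates of \cite[Lemma~5.6]{DziukElliott2013_L2estimates} for $h\le h_0$. This gives the bound $c(\sum_T h_T^4\|\baruhtaulift(t)\|^2_{L^2(T^\ell)})^{1/2}$. Since $\baruhtaulift(t)$ is a convex combination of $\underline{U_h^n}$ and $\underline{\IntRef U_h^{n-1}}$, both transported with $\partialmh=0$, we apply Proposition~\ref{prop:norm_equiv_partialmh} elementwise together with \eqref{eq:norm_equiv}. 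This returns to $\Ga_h^n$ and bounds the sum by $c(C_\kappa^n)^2(\mathcal{G}_v^n)^2$.

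\textbf{Main obstacle.} The main obstacle is the elementwise use of Proposition~\ref{prop:norm_equiv_partialmh}. That proposition is stated globally, so we must check that its proof, a Gronwall argument on the Leibniz formula with divergence bounded by $\kappa^n$, localizes to the moving curved elements $T^\ell(t)$. It should, because elements are transported by the discrete flow. A second difficulty is making the $O(h_T^2)$ velocity estimate local in $h_T$ rather than global in $h$; we expect this to follow from local interpolation error on each element.
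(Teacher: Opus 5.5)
Your proposal is correct and follows the paper's proof. You split $\mathcal{R}_{\text{geo}}$ into a combined mass/divergence part and a stiffness part and bound them with the geometric perturbation estimates and Cauchy--Schwarz, you bound $\mathcal{R}_v$ via $\|v-v_h\|_{L^\infty}\le ch^2$, and you use Proposition~\ref{prop:norm_equiv_partialmh} with \eqref{eq:norm_equiv} to pass between $\Ga(t)$ and $\Ga_h^n$, which yields the factor $C_\kappa^n$ and, after time integration, $\tau^n$. Your explicit elementwise versions ($|1-\delta_h|\le ch_T^2$, $\|A_h-P_h\|_{L^\infty(T)}\le ch_T^2$, $\|v-v_h\|_{L^\infty(T)}\le ch_T^2$) are exactly what the paper's remark ``we estimated elementwise'' needs to reach the $h_T$-weighted indicators, and the localisation of Proposition~\ref{prop:norm_equiv_partialmh} that you flag does go through, because each lifted element is a material patch for the flow of $v_h$.
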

\begin{proof}
The spatial geometric residual $\mathcal{R}_{\text{geo}}$ is directly bound using the bounds between geometric and continous bilinear forms, see Proposition~\ref{prop:geo_err_bilin_form}, and via the Cauchy--Schwarz inequality, where the mass terms $m$ and weighted mass terms $g$ are combined. Yielding
\begin{align*}
\langle \mathcal{R}_{\text{geo}}(t),w \rangle &\leq ch^2 \Big(\|\partial_h^\bullet \baruhtau - F_h^n+ (\nabla_{\Ga_h^n} \cdot V_h^n) U_h^n\|_{L^2(\Ga_h^n)} + | U_h^n  |_{H^1(\Ga_h^n)} \Big) \|W^n\|_{H^1(\Ga_h^n)}\\
&\leq c(h \eta_h^n + \mathcal{G}_h^n) \|W^n\|_{H^1(\Ga_h^n)}.
\end{align*}
Note that we estimated elementwise, to obtain the relation to the indicators.
Integration in time combined with Proposition~\ref{prop:norm_equiv_partialmh} to shift the evaluations of the test function to $t$, gives the desired bound. 
By integrating the time-dependent quantity of the time-shift, we obtain
\begin{align*}
\int_{t^{n-1}}^{t^n} \exp(\kappa^n |t^n-s|)\d s \leq \tau^n \exp(\kappa^n \tau^n) = \tau^n C_\kappa^n.
\end{align*}
%

The geometric velocity residuals are bound similarly in a direct fashion:
\begin{align*}
\langle \mathcal{R}_{v}(t), w \rangle &\leq \|v-v_h\|_{L^\infty(\Ga(t))} \|\baruhtaulift\|_{L^2(\Ga(t))} |w|_{H^1(\Ga(t))} \leq c h^2 \|\baruhtaulift\|_{L^2(\Ga(t))} |w|_{H^1(\Ga(t))} \\
&\leq c h^2 \|\baruhtau\|_{L^2(\Ga_h^n(t))} \|w\|_{H^1(\Ga(t))} \leq c C_\kappa^n \mathcal{G}_v^n \|w\|_{H^1(\Ga(t))}.
\end{align*}
By \cite[Lemma~7.2]{Lubich_Mansour2015} we bound the difference $v-v_h$, the approximation $\baruhtaulift$ is directly bound by triangle inequality combined with Proposition~~\ref{prop:norm_equiv_partialmh} and time integration. \hfill
\end{proof}

\subsubsection{Movement residual}
\label{sec:res_move}
The movement yields a novel residual which requires a different approach then the usual indicators as we have to compare quantities on two states of the surface. By construction the arguments in the bilinear forms are fixed along the flow such that the movement residual only measures the perturbation resulting from the transport of these quantities.
\begin{proposition}
\label{prop:move_res}
	For $0 < t^n \leq T$ the movement residual $\mathcal{R}_{\textnormal{move}}$ is bounded from above by the movement indicator $\zeta_\textnormal{move}^n$ \eqref{eq:indicator - move} and the temporal indicators $\eta_\tau^n$ \eqref{eq:indicator - temporal}, $\zeta_\tau^n$ \eqref{eq:indicator - temporal2}.
	\begin{align}
		\label{eq:indicator upper bound for R_move}
		\refstepcounter{equation}\tag{\theequation a}
		\| \mathcal{R}_{\textnormal{move}} \|_{L^2(t^{n-1},t^n;{H^{-1} (\Ga(t))})}^2 \leq &\ c (C_\kappa^n)^2 \tau^n \big((\zeta_\textnormal{move}^n)^2+(\kappa^n \tau^n)^2 (\eta_\tau^n + \zeta_\tau^n)^2\big).
	\end{align}
	The constant $c > 0$ is independent of $h$ and $\tau^n$, but depends on $Q_{t^{n-1},t^n}$ via \eqref{eq:norm_equiv}.
\end{proposition}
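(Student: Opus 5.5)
The plan is to treat each of the four pairs in $\mathcal{R}_\textnormal{move}$ as the increment of a time-dependent bilinear form along the discrete flow. Each pair compares the same form at time $t$ and at time $t^n$, with arguments that are transported constantly along $v_h$. Concretely, for $s \in [t,t^n]$ set
\begin{align*}
\mu_1(s) &:= m^s(\partial_h^\bullet \baruhtaulift(\cdot,t,s),\underline{w}(s)), &
\mu_2(s) &:= m^s((\underline{F_h^n})^{\ell^n(s)},\underline{w}(s)),\\
\mu_3(s) &:= m^s((\underline{\nabla_{\Ga_h^n}\cdot V_h^n})^{\ell^n(s)}\baruhtaulift(\cdot,t,s),\underline{w}(s)), &
\alpha(s) &:= a^s(\baruhtaulift(\cdot,t,s),\underline{w}(s)).
\end{align*}
Then $\langle \mathcal{R}_\textnormal{move},w\rangle = -\int_t^{t^n} \frac{\d}{\d s}(\mu_1+\mu_2+\mu_3+\alpha)(s)\,\d s$. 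By construction every argument satisfies $\partial_h^\bullet(\cdot)=0$: this holds for $\underline{w}$, for the lifted extensions of finite element functions, and for $\baruhtaulift(\cdot,t,\cdot)$ with $t$ frozen. Hence the discrete versions of the transport formulas \eqref{eq:form_derivatives}, with $v$ replaced by $v_h$, leave only the geometric terms. This gives $\frac{\d}{\d s}\mu_i = g^s(v_h;\cdot,\cdot)$ and $\frac{\d}{\d s}\alpha = b^s(v_h;\cdot,\cdot)$.

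Next I bound these terms pointwise in $s$. I use $\|\nabla_\Ga\cdot v_h\|_{L^\infty}+\|\mathcal B(v_h)\|_{L^\infty}\le c\kappa^n$ for $h\le h_0$ (stated after \eqref{eq:kappa}) together with Cauchy--Schwarz. This gives
\[
\Big|\tfrac{\d}{\d s}\textstyle\sum\Big| \le c\kappa^n\Big(\big\|\partial_h^\bullet\baruhtaulift+(\underline{\nabla\cdot V_h^n})^\ell\baruhtaulift-(\underline{F_h^n})^\ell\big\|_{L^2(\Ga(s))}\|\underline w(s)\|_{L^2} + |\baruhtaulift(\cdot,t,s)|_{H^1(\Ga(s))}|\underline w(s)|_{H^1}\Big).
\]
The three mass terms have to be merged into a single $L^2$ product before estimating. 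Otherwise $F_h^n$ and the divergence term would appear separately, and they are not controlled by $\zeta_\textnormal{move}^n$. Proposition~\ref{prop:norm_equiv_partialmh} then moves all norms to time $t^n$, and \eqref{eq:norm_equiv} unlifts them to $\Ga_h^n$, at the cost of a factor $C_\kappa^n$. The $\underline w$ norms are moved back to $\|w\|_{H^1(\Ga(t))}$ in the same way.

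What remains is to reduce from $\baruhtau(\cdot,t,t^n)$ to $U_h^n$ so that the indicator \eqref{eq:indicator - move} appears. I write $\baruhtau = U_h^n - \frac{t^n-t}{\tau^n}(U_h^n-\IntRef U_h^{n-1})$. The $H^1$ defect is then at most $|U_h^n-\IntRef U_h^{n-1}|_{H^1}$, which is $\eta_\tau^n$. The $L^2$ defect in the divergence-weighted term is at most $\zeta_\tau^n$, and $\partial_h^\bullet\baruhtau$ is exactly the difference quotient. Integrating over $[t,t^n]$ produces a factor $\tau^n$. Elementwise summation then gives
\[
|\langle\mathcal{R}_\textnormal{move},w\rangle| \le cC_\kappa^n\big(\zeta_\textnormal{move}^n+\kappa^n\tau^n(\eta_\tau^n+\zeta_\tau^n)\big)\|w\|_{H^1(\Ga(t))}.
\]
Squaring and integrating over $(t^{n-1},t^n)$ yields the claim.

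The main obstacle is justifying the differentiation step. The lifts $\ell^n(s)$ vary with $s$, and the extension $\underline{w}$ is taken along $v_h$, not along $v$. I must therefore verify that the transport formulas hold with $v_h$ on $\Ga(s)$, and that $(\underline{F_h^n})^{\ell^n(s)}$ and $(\underline{\nabla\cdot V_h^n})^{\ell^n(s)}$ have vanishing $\partial_h^\bullet$; they do, since they are lifts of pushed-forward functions. I also need to check the uniform $L^\infty$ bound on $\mathcal B(v_h)$ by $\kappa^n$, which should follow from the interpolation estimates cited after \eqref{eq:kappa}.
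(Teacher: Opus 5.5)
Your proposal is correct and follows the paper's proof essentially step by step. The shared route is the fundamental theorem of calculus in $s$ with every argument transported along $v_h$, so that only the $g^s(v_h;\cdot,\cdot)$ and $b^s(v_h;\cdot,\cdot)$ terms survive, followed by merging the mass terms into one $L^2$ product and comparing $\baruhtaulift(\cdot,t,s)$ with $(\underline{U_h^n})^{\ell^n(s)}$ to obtain $\zeta_\textnormal{move}^n$ plus $\kappa^n\tau^n(\eta_\tau^n+\zeta_\tau^n)$. The only difference is cosmetic: the paper inserts $\pm(\underline{U_h^n})^{\ell^n(s)}$ before differentiating (its terms $A$ and $B$), whereas you split after the pointwise bound. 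Note also that $\mu_2$ must enter your integral representation with a minus sign, because $F_h^n$ carries a minus in the residual; your merged norm already uses that sign.
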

Note that for non-moving domains all contributions of the moving residual would collapse to 0.
\begin{proof}
We restate the movement residual from \eqref{eq:res_decom} with explicit time dependencies
\begin{align*}
\langle \mathcal{R}_\textnormal{move},w \rangle = &m^t(\partial_h^\bullet \baruhtaulift(x,t,t),w) -m^n(\partial_h^\bullet \baruhtaulift(x,t,t^n),\underline{w}) + m^t((\underline{F_h^n})^{\ell^n(t)},w)  -m^n((F_h^n)^{\ell^n},\underline{w})\\
&+ m^t((\underline{\nabla_{\Ga_h^n} \cdot V_h^n})^{\ell^n(t)} \baruhtaulift(x,t,t),w) -m^n((\nabla_{\Ga_h^n} \cdot V_h^n)^{\ell^n} \baruhtaulift(x,t,t^n),\underline{w})\\
	 &+ a^t(\baruhtaulift(x,t,t),w)-a^n(\baruhtaulift(x,t,t^n),\underline{w}).
\end{align*}
Observe that all arguments are the same functions but once represented at time $t$ and once at time $n$ thus we rewrite the terms using the fundamental theorem of calculus:
\begin{align*}
\int_{t^n}^t &\frac{\d}{\d s} \Big[m^s(\partial_h^\bullet \baruhtaulift(x,t,s) ,\underline{w})- m^s((\underline{F_h^n})^{\ell^n(s)} ,\underline{w}) + m^s ((\underline{\nabla_{\Ga_h^n} \cdot V_h^n})^{\ell^n(s)},\baruhtaulift(x,t,s),\underline{w}) +a^s(\baruhtaulift(x,t,s),\underline{w})  \Big] ds.
\end{align*}
Next we insert $\pm (\underline{U_h^n})^{\ell^n(s)}$ into the bilinear forms containing the $t$-dependent $\baruhtaulift(x,t,s)$, which is done to step by step construct an indicator closely related to the spatial indicator. We collect these terms and handle them separately. Note that $\partial_h^\bullet \baruhtaulift(x,t,s)$ is independent of $t \in (t^{n-1},t^n]$ interval-wise:
\begin{align*}
A &:= \frac{\d}{\d s} \Big[m^s(\partial_h^\bullet \baruhtaulift(x,t,s) - (\underline{F_h^n})^{\ell^n(s)} + (\underline{\nabla_{\Ga_h^n} \cdot V_h^n})^{\ell^n(s)}(\underline{U_h^n})^{\ell^n(s)}  ,\underline{w}) +a^s((\underline{U_h^n})^{\ell^n(s)},\underline{w}) \Big], \\
B &:= \frac{\d}{\d s} \Big[m^s (\underline{\nabla_{\Ga_h^n} \cdot V_h^n})^{\ell^n(s)} \big(\baruhtaulift(x,t,s)-(\underline{U_h^n})^{\ell^n(s)}\big),\underline{w})+a^s(\baruhtaulift(x,t,s)-(\underline{U_h^n})^{\ell^n(s)},\underline{w})\Big].
\end{align*}

We differentiate these terms, where it is crucial to note that the material derivative of all arguments vanish by the construction of our extensions.
Thus the only contributions resulting from $A$ are the ones arising due to the evolving surface Leibniz rule \eqref{eq:leibniz_formula}:
\begin{align*}
A = g^s(v_h; \partial_h^\bullet \baruhtaulift(x,t,s) - (\underline{F_h^n})^{\ell^n(s)} + (\underline{\nabla_{\Ga_h^n} \cdot V_h^n})^{\ell^n(s)}(\underline{U_h^n})^{\ell^n(s)},\underline{w}) + b^s(v_h;(\underline{U_h^n})^{\ell^n(s)},\underline{w}).
\end{align*}

Now we can directly estimate the terms, using Proposition~\ref{prop:norm_equiv_partialmh} and norm equivalence of the lift \eqref{eq:norm_equiv} to arrive at
\begin{align*}
A \leq c C_\kappa^n \kappa^n \Big(\Big\|\frac{1}{\tau^n} (U_h^n-\IntRef U_h^{n-1}) + (\nabla_{\Ga^n} \cdot V_h^n) U_h^n -F_h^n\Big\|_{L^2(\Ga_h^n)} +|U_h^n|_{H^1(\Ga_h^n)} \Big) \|w\|_{H^1(\Gamma(t))}.
\end{align*}
Time integration yields the bound:
\begin{align*}
\int_{t^n}^t A &\leq c C_\kappa^n \kappa^n \tau^n \Big( \Big\|\frac{1}{\tau^n}(U_h^n-\IntRef U_h^{n-1}) + (\nabla_{\Ga^n} \cdot V_h^n) U_h^n -F_h^n\Big\|_{L^2(\Ga_h^n)} + |U_h^n|_{H^1(\Ga_h^n)} \Big) \|w\|_{H^1(\Gamma(t))} \\
&\leq C_\kappa^n \zeta_\textnormal{move}^n \|w\|_{H^1(\Gamma(t))}.
\end{align*}

For the second term we write, using the definitions of the time interpolation $\baruhtaulift(x,t,s)$ from \eqref{eq:fully discrete solution definition},
\begin{align*}
\baruhtaulift(x,t,s)-(\underline{U_h^n})^{\ell^n(s)} = \Big(\frac{t-t^{n-1}}{\tau^n}-1\Big) (\underline{U_h^n}-\underline{\IntRef U_h^{n-1}})^{\ell(s)} := z(\cdot,t,s).
\end{align*}

Again with respect to differentiation in s the material derivative vanishes $\partialmh |_s z = 0$ thus similar to the prior result we have
\begin{align*}
B = g^s(v_h; (\underline{\nabla_{\Ga_h^n} \cdot V_h^n})^{\ell^n(s)} z,\underline{w}) + b^s(v_h; z,\underline{w}).
\end{align*}
We integrate, flipping the integration bounds and bounding via norm equivalence of the lift \eqref{eq:norm_equiv} and under movement \eqref{eq:norm_equiv_partialmh}, and using the velocity bound \eqref{eq:kappa},
\begin{align*}
\int_{t^n}^t B \d s &= -\Big(\frac{t-t^{n-1}}{\tau^n}-1\Big) \int_{t}^{t^n} g^s(v_h; (\underline{\nabla_{\Ga_h^n} \cdot V_h^n})^{\ell^n(t)} (\underline{U_h^n}-\underline{\IntRef U_h^{n-1}})^{\ell(s)} ,\underline{w})\\
    &\hphantom{= -\Big(\frac{t-t^{n-1}}{\tau^n}-1\Big) \int_{t}^{t^n}} + b^s(v_h; (\underline{U_h^n}-\underline{\IntRef U_h^{n-1}})^{\ell(s)},\underline{w}) \d s \\
&\leq \Big(1-\frac{t-t^{n-1}}{\tau^n}\Big) c \kappa^n \int_{t}^{t^n} \|(\underline{\nabla_{\Ga_h^n} \cdot V_h^n}) (\underline{U_h^n}-\underline{\IntRef U_h^{n-1}})\|_{L^2(\Gamma_h^n(s))} \|\underline{w}\|_{L^2(\Gamma(s))}  \\
&\hphantom{\leq \Big(1-\frac{t-t^{n-1}}{\tau^n}\Big) \kappa^n \int_{t}^{t^n}} +  |\underline{U_h^n}-\underline{\IntRef U_h^{n-1}}|_{H^1(\Gamma_h^n(s))} |\underline{w}|_{H^1(\Gamma(s))} \d s \\
&\leq c C_\kappa^n \kappa^n \tau^n \Big( \|(\nabla_{\Ga_h^n} \cdot V_h^n) (U_h^n-\IntRef U_h^{n-1})\|_{L^2(\Gamma_h^n)} + |U_h^n-\IntRef U_h^{n-1}|_{H^1(\Gamma_h^n)} \Big) \|w\|_{H^1(\Gamma(t))} \\
&\leq c C_\kappa^n \kappa^n \tau^n  (\eta_\tau^n+\zeta_\tau^n) \|w\|_{H^1(\Gamma(t))}.
\end{align*}
Taking the $L^2$-norm of both terms in time yields the result. \hfill
\end{proof}

\subsubsection{Coarsening on moving domains}
\label{sec:res_coarse}
The coarsening on surfaces is non-trivial but tools to tackle these terms were introduced in \cite[Lemma~5.5]{KL25}. We extend these results to moving surfaces and show the following upper bound:
\begin{proposition}
\label{prop:coarse_res}
	For $0 < t^n \leq T$ and a surface $\Ga(t) \in C^2$ where $\Ga_h^n$ is an admissible triangulation for $h \leq h_0$, which is obtained using NVB refinement and based on the constructions described in Section~\ref{sec:smallest_common_triangulation}.
	Then, there is an $h$- and $n$-uniform constant $c > 0$, depending only on $\Ga(t)$, such that the coarsening residual from \eqref{eq:residual_coarsening_decom} is bounded by the coarsening indicator \eqref{eq:indicator - coarse} and mesh-transfer indicator \eqref{eq:indicator - refinement} as
	\begin{equation} 
	\label{eq:indicator upper bound coarse}
	\refstepcounter{equation}\tag{\theequation a}
		\|\Rcoarse(t)\|_{H^{-1} (\Ga(t))} \leq c C_\kappa^n \, (\eta_\textnormal{c}^n + \eta_\textnormal{trans}^n) .
	\end{equation}
\end{proposition}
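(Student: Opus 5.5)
The starting point is to write the coarsening residual through the difference of the two time interpolations. Since $\mathcal{R}$ is affine, $\langle \Rcoarse, w\rangle$ equals the residual applied to $e_c := u_{h,\tau} - \baruhtaulift$ without the $f$-term. That is,
\begin{align*}
\langle \Rcoarse, w\rangle = m^t(\partialm e_c, w) + a^t(e_c,w) + g^t(v;e_c,w).
\end{align*}
The $U_h^n$ contributions cancel, because both interpolants use the same weight $\frac{t-t^{n-1}}{\tau^n}$ and the same lift $\ell^n(t)$ for $\underline{U_h^n}$. Hence
\begin{align*}
e_c(\cdot,t) = \frac{t^n-t}{\tau^n}\Big( (\underline{U_h^{n-1}})^{\ell^{n-1}(t)} - (\underline{\IntRef U_h^{n-1}})^{\ell^n(t)} \Big) =: \frac{t^n-t}{\tau^n}\, d(\cdot,t).
\end{align*}
I would then rewrite $\partialm e_c = -\frac{1}{\tau^n} d + \frac{t^n-t}{\tau^n}\partialm d$. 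To treat $\partialm d$, I would replace $\partialm$ by $\partialmh$, giving $\partialm d = \partialmh d + (v-v_h)\cdot\nabla_\Ga d$. The difficulty is that $\partialmh$ annihilates only the $\ell^n$-lifted part; the $\ell^{n-1}$ part is transported by a different induced velocity. Instead of differentiating, I would use the Leibniz identities \eqref{eq:form_derivatives} to integrate by parts in time, or bound $\|\partialm d\|_{H^{-1}}$ directly. Either route gives $\|\Rcoarse\|_{H^{-1}} \le c\big((\frac{1}{\tau^n}+\kappa^n)\|d\|_{L^2(\Ga(t))} + |d|_{H^1(\Ga(t))}\big)$ up to $\mathcal{O}(h^2)$ velocity mismatch terms, where I use \eqref{eq:kappa} for the $g$-term.

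Next I would bound $d$ on the common triangulation $\Ga_h^{\comesh}(t)$ from Section~\ref{sec:smallest_common_triangulation}. I insert $\pm$ the lifts of $\mathcal{I}_{n-1}^{\comesh}\underline{U_h^{n-1}}$ and $\mathcal{I}_{n}^{\comesh}\underline{\IntRef U_h^{n-1}}$, both lifted from the common mesh, and split $d = d_1 + d_2 + d_3$.

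\emph{The term $d_2$} is the difference of the two common-mesh interpolants. On elements where the common mesh agrees with both parents it vanishes, because the node alignment construction (re-appending just-coarsened vertices via the flow) makes the nodal values coincide. What remains lives on $\mathcal{T}_{h,\textnormal{coarse}}^{\comesh}$ and produces exactly $\eta_{\textnormal{c}}^n$ with weights $(\kappa^n)^2 + (\tau^n)^{-2}$ in $L^2$ and $1$ in $H^1$. Here Proposition~\ref{prop:norm_equiv_partialmh} and \eqref{eq:norm_equiv} move the norms from time $t$ to $t^n$ at the price of $C_\kappa^n$.

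\emph{The terms $d_1$ and $d_3$} compare a parent-mesh function with its Lagrange interpolant on the common mesh, each with its own lift. On elements where the parent is not refined, the parent function and its interpolant coincide, so these terms vanish. On refined elements, in $\mathcal{T}_{h,\textnormal{ref}}^{n-1}$ and $\mathcal{T}_{h,\textnormal{coarse}}^n$ respectively, the two functions agree at the nodes but live on different polyhedra. I would invoke the generalized common-interpolation estimates referred to in the introduction, an extension of \cite[Section~3.3]{KL25}. These give $L^2$ differences of order $h_T^2|\cdot|_{H^1(T)}$ and $H^1$ differences of order $h_T|\cdot|_{H^1(T)}$, both coming from the $\mathcal{O}(h_T^2)$ distance between the intermediate nodes and their lifts. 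Multiplying by $(\frac{1}{\tau^n}+\kappa^n)$ then yields the weights $(h_T^2/\tau^n)^2 + h_T^4(\kappa^n)^2 + h_T^2$ of $\eta_{\textnormal{trans}}^n$.

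The main obstacle is the material derivative of $d$. It mixes two lifts at the moving time $t$, and the lift and the flow do not commute. My first attempt would be to show that $\partialmh$ of the $\ell^{n-1}(t)$-lifted extension is controlled by $\kappa^n$ times its norms on refined elements only, since elsewhere the node alignment makes the two pushed-forward meshes coincide. The $\kappa^n$ weights in both indicators suggest precisely this structure.
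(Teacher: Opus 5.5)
Your skeleton is the paper's: the identity $u_{h,\tau}-\baruhtaulift=\frac{t^n-t}{\tau^n}\,d$, the reduction to $(\frac{1}{\tau^n}+\kappa^n)\|d\|_{L^2}+|d|_{H^1}$, the three-way split through $\Ga_h^{\comesh}(t)$, the middle term giving $\eta_\textnormal{c}^n$, and the $\eta_\textnormal{trans}^n$ weights from transfer bounds of order $h_T^2$ in $L^2$ and $h_T$ in $H^1$.

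The gap is in $d_1,d_3$, which carry the actual new content of the proof. You only invoke the ``generalized'' interpolation estimates, but those are exactly what this proof must establish. The mechanism you give for them (distance between intermediate nodes and their lifts) is the stationary one, and it no longer applies. In the evolving construction a new common-mesh node at time $t$ is $C^1=G\big((\tfrac{A^n+B^n}{2})^{\ell^n},t^n,t\big)$: bisect at $t^n$, lift with $\ell^n$, flow back to $t$. This is in general \emph{not} the $\ell(t)$-lift of the intermediate node $C^0=\tfrac{A+B}{2}$ of $\Ga_h^{n-1}(t)$.

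The missing step is to prove $|C^1-C^0|=\mathcal{O}(h^2)$ anyway. Extend the flow to the tube by $\tilde G(x,t^n,t):=G(x^{\ell(t^n)},t^n,t)$, which is $C^2$. Taylor-expanding at $A^n,B^n$ gives $C^1=\tfrac12\big(G(A^n,t^n,t)+G(B^n,t^n,t)\big)+\mathcal{O}(h^2)=C^0+\mathcal{O}(h^2)$.

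Only then can you run the $\theta$-argument:
\begin{itemize}
\item move the nodes affinely, $x_j^\theta=x_j^0+\theta(x_j^1-x_j^0)$, with frozen nodal values;
\item write the difference as $\int_0^1\frac{\d}{\d\theta}(w_h^\theta)^{\ell^\theta}\d\theta$;
\item prove $\partial_\theta x^\theta(a)=\mathcal{O}(h_T^2)$ for the unlift of $a\in\Ga(t)$.
\end{itemize}
That unlift may jump between children of a parent element as $\theta$ varies (Figure~\ref{fig:mismatch_common_triangulation}). The node-alignment construction is what keeps it inside one parent element and avoids patchwise bounds. Minkowski's inequality and Proposition~\ref{prop:norm_equiv_partialmh} then give $c\,C_\kappa^n h_T^2|U_h^{n-1}|_{H^1(T)}$, and likewise for $\IntRef U_h^{n-1}$ on coarsened elements.

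On the material derivative you have found something real. $\partialmh$ is built on the node set of $\Ga_h^n$, so it does not annihilate $\psi_{n-1}:=(\underline{U_h^{n-1}})^{\ell^{n-1}(t)}$ on changed elements. The paper's formula for $\partialmh(u_{h,\tau}-\baruhtaulift)$ treats both lifted parts as transported. However, neither of your remedies works as stated:
\begin{itemize}
\item Integrating by parts in time is incompatible with a bound in $H^{-1}(\Ga(t))$ at fixed $t$, and with the $L^2(H^{-1})$ norm required by Proposition~\ref{prop:equiv_error_res}, because the test functions carry no time derivative.
\item The defect is not a $\kappa^n$-term, since $\kappa^n$ only controls $\nabla_\Ga\cdot v$ and $\mathcal{B}(v)$.
\end{itemize}
The right observation is that $\psi_{n-1}$ is transported by the velocity $v_h^{n-1}$ induced (as in Section~\ref{ch:subsec:full_dis_time_int}) by the nodes of $\Ga_h^{n-1}(t)$. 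Hence $\partialmh\psi_{n-1}=(v_h-v_h^{n-1})\cdot\nabla_\Ga\psi_{n-1}$. Both induced velocities are within $ch^2$ of $v$ (the estimate used for $\mathcal{R}_v$), and they coincide on lifts of unchanged elements. So this term is bounded by $c\,h_T^2|\psi_{n-1}|_{H^1}$ on changed elements only, and it is absorbed into $\eta_\textnormal{trans}^n$. On coarsened regions this needs the extra step $|\psi_{n-1}|_{H^1}\le|(\underline{\IntRef U_h^{n-1}})^{\ell^n(t)}|_{H^1}+|d|_{H^1}$.
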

\begin{proof}
First recall that the coarsening residual, from \eqref{eq:residual_coarsening_decom} explicitly reads:
\begin{equation*}
	\label{eq:res_coarse}
	\langle \mathcal{R}_c,w\rangle = m^t(\partial^\bullet (u_{h,\tau}-\baruhtaulift),w)+a^t(u_{h,\tau}-\baruhtaulift,w)+g^t(v;u_{h,\tau}-\baruhtaulift,w).
\end{equation*}
We simplify the difference, by the two definitions of the time interpolations \eqref{eq:lifted_discrete_sol} and \eqref{eq:fully discrete solution definition}, to
\begin{align*}
	u_{h,\tau}(x,t)-\baruhtaulift(x,t) &= \frac{t^n-t}{\tau^n}\big((\underline{U_h^{n-1}})^{\ell^{n-1}(t)} - (\underline{\IntRef U_h^{n-1}})^{\ell^{n}(t)}\big), \\
	\partial_h^\bullet (u_{h,\tau}(x,t)-\baruhtaulift(x,t)) &= \frac{1}{\tau^n}\big((\underline{\IntRef U_h^{n-1}})^{\ell^{n}(t)}-(\underline{U_h^{n-1}})^{\ell^{n-1}(t)} \big).
\end{align*}
As for the initial decomposition (see Section~\ref{sec:decomposition}) we try to swap to the discrete material derivative $\partialmh$ in the first term and the discrete velocity $v_h$ for the last. Resulting in
\begin{align*}
	\langle \mathcal{R}_c,w\rangle &= m^t(\partial_h^\bullet (u_{h,\tau}-\baruhtaulift),w)+g^t(v_h;u_{h,\tau}-\baruhtaulift,w) \\
	&+m^t((\partial^\bullet-\partial_h^\bullet) (u_{h,\tau}-\baruhtaulift),w)+g^t(v-v_h;u_{h,\tau}-\baruhtaulift,w)+a^t(u_{h,\tau}-\baruhtaulift,w).
\end{align*}
To apply the theory of \cite[Lemma~5.5]{KL25} we directly estimate everything and apply standard bounds between discrete and continuous quantities, this gives
\begin{align*}
	|\langle \mathcal{R}_c,w\rangle| &\leq c \Big(\big((\tau^n)^{-1}+\kappa^n+h^2\big) \big\|(\underline{\IntRef U_h^{n-1}})^{\ell^{n}(t)}-(\underline{U_h^{n-1}})^{\ell^{n-1}(t)}\big\|_{L^2(\Gamma(t))} \\
	 &+ (1+h^2)  \big| (\underline{U_h^{n-1}})^{\ell^{n-1}(t)} - (\underline{\IntRef U_h^{n-1}})^{\ell^{n}(t)}\big|_{H^1(\Gamma(t))} \Big) \|w\|_{H^1(\Gamma(t))}.
\end{align*}

It is important to observe that the difference between $\underline{U_h^{n-1}}$ and $\underline{\IntRef U_h^{n-1}}$ is only non-zero for elements which are coarsened directly before, or refined during, solving the discrete problem \eqref{eq:fulldiscreteheat} at time $t^n$. Thus only elements which change between timesteps are relevant, we will collect these elements in the set of refined $\mathcal{T}_{h,\textnormal{ref}}^{n-1}$ triangles (based on $\Ga_h^{n-1}$) and coarsened $\mathcal{T}_{h,\textnormal{coarse}}^{n}$ triangles (based on $\Ga_h^{n}$). For the upcoming analysis we show all results on the whole domain, but the indicator is represented and only computed on the elements which contribute.

We focus on the $L^2$-error first, we insert zeros based on representing both $\underline{U_h^{n-1}}$ and $\underline{\IntRef U_h^{n-1}}$ on the lifted common mesh:
\begin{align*}
	\| (\underline{\IntRef U_h^{n-1}})^{\ell^{n}(t)}- (\underline{U_h^{n-1}})^{\ell^{n-1}(t)} \| \leq &\ \|(\mathcal{I}_n^{\comesh} \underline{\IntRef U^{n-1}_h})^{\ell^{\comesh}(t)} - (\mathcal{I}_{n-1}^{\comesh} \underline{U^{n-1}_h})^{\ell^{\comesh}(t)} \| \\
	&\ + \| (\mathcal{I}_{n}^{\comesh} \underline{\IntRef U^{n-1}_h})^{\ell^{\comesh}(t)} - (\underline{\IntRef U^{n-1}_h})^{\ell^{n}(t)} \| \\
	&\ + \| (\mathcal{I}_{n-1}^{\comesh} \underline{U^{n-1}_h})^{\ell^{\comesh}(t)} - (\underline{U^{n-1}_h})^{\ell^{n-1}(t)} \|.
\end{align*}
We arrive at a typical coarsening term and two interpolation-type errors. The first term is unlifted to the discrete domain via norm equivalence, and further bound by time independent quantities via Proposition~\ref{prop:norm_equiv_partialmh}:
\begin{align*}
	\|(\mathcal{I}_n^{\comesh} \underline{\IntRef U^{n-1}_h})^{\ell^{\comesh}(t)} - &(\mathcal{I}_{n-1}^{\comesh} \underline{U^{n-1}_h})^{\ell^{\comesh}(t)} \|_{L^2(\Ga(t))} \leq c \|\mathcal{I}_n^{\comesh} \underline{\IntRef U^{n-1}_h} -\mathcal{I}_{n-1}^{\comesh} \underline{U^{n-1}_h}\|_{L^2(\Ga_h^{\comesh}(t))}   \\
	&\leq c C_\kappa^n \|\underline{\mathcal{I}_n^{\comesh} \underline{\IntRef U^{n-1}_h} - \mathcal{I}_{n-1}^{\comesh} \underline{U^{n-1}_h}}\|_{L^2(\Ga_h^{\comesh}(t^{n}))}.
\end{align*}
The two interpolation errors can be viewed as mesh-transfer errors, which are introduced by the non-linear lifting operator. Where the first of the two, related to $\underline{\IntRef U^{n-1}_h}$, measures the mesh transfer error resulting from coarsening, whereas the second, related to $\underline{U^{n-1}_h}$, measures the mesh transfer error resulting from refining. In fact, following the upcoming argument it is clear that the mesh transfers do not contribute to the error if the parent mesh (i.e.\ $\Ga_h^{n-1}(t)$ and $\Ga_h^n(t)$) align with the common mesh $\Ga_h^{\comesh}(t)$.

 In general, to analyse the mesh-transfer errors we require refinement methods which have a hierarchical structure like newest-vertex bisection. If we fix $t$ then we can follow similar arguments as in \cite[Lemma~5.5]{KL25}. The strategies still apply, however the nodal positions in general need an additional analysis step as the node correspondence does not correspond to a singular lift anymore, but a compositions of the flow and lifts at possibly different discrete timesteps.
We show the interpolation error pointwise in time, based on a $\theta$ argument similar to the one in \cite{KLLP_2017}.
For the last interpolation error this yields
\begin{align}
\label{eq:coarse_proof_int_bound}
	\| (\mathcal{I}_{n-1}^{\comesh} \underline{U^{n-1}_h})^{\ell^{\comesh}(t)} - (\underline{U^{n-1}_h})^{\ell^{n-1}(t)} \|_{L^2(\Ga(t))} &\leq ch^2 |\underline{U^{n-1}_h}|_{H^1(\Gamma_h^{n-1}(t))} \leq c C_\kappa^n h^2 | U^{n-1}_h|_{H^1(\Gamma_h^{n-1})}. 
\end{align}

The final inequality was again based on Proposition~\ref{prop:norm_equiv_partialmh}. 
The key idea for the first inequality is to explicitly describe the elementwise affine transformation between the meshes $\Gamma_h^{n-1}(t)$ and $\Gamma_h^{\comesh}(t)$. For the non-evolving case the transformation was based on a refined but not lifted version of $\Gamma_h^{n-1}(t)$ (i.e.\ tracking all refinement as the midpoint of an edge based on newest-vertex bisection) and then lifting refined nodes to the exact surface which resulted in $\Gamma_h^{\comesh}(t)$, atleast for a single refinement-level.
By construction the evaluation of the nodes for the refined but not lifted version of $\Gamma_h^{n-1}(t)$ and $\Gamma_h^{\comesh}(t)$ are equivalent. Thus the difference in the interpolation error could be represented as the difference of two identical finite element functions but with nodes perturbed under an elementwise affine transformation. 
In our setting if we lift the midpoints of the refined but not lifted mesh $\Gamma_h^{n-1}(t)$ at any time other than $t^n$ they will in general not match up with the corresponding node in $\Gamma_h^{\comesh}(t)$ (see Figure~\ref{fig:common_triangulation}) as this was possibly constructed by lifting at time $t^n$ and then pull-back to time $t$ under the exact flow. However, the elementwise affine transformation can also be constructed such that the refined but not lifted node is mapped linearly to the corresponding node in $\Gamma_h^{\comesh}(t)$. Then again the nodal values always match up and we are able to rewrite the interpolation difference using the fundamental theorem of calculus. 
We define the affine mapping based on the $\theta$-dependent nodal values $x_j^\theta(t) := x_j^0(t) + \theta (x_j^1(t) - x_j^0(t))$, where $x_j^0(t)$ are the refined but not yet lifted vertices of $\Gamma_h^{n-1}(t)$, and $x_j^1(t)$ are the vertices of $\Gamma_h^{\comesh}(t)$.
Based on these nodal values and mesh connectivity of the common triangulation we define the $\theta$-dependent domain as $\Gamma_h^\theta(t)$.
 In comparison to the stationary case we lost the property, for single level refinements, that $x_j^1 = (x_j^0)^{\ell(t)}$. 

It is further important to note that due to the misaligned movement of the node against the unlift of the point $a$ onto the two different discrete meshes, a node can be unlifted into two different elements (in any child of a parent element of $\Gamma_h^{n-1}(t)$), as illustrated in Figure~\ref{fig:mismatch_common_triangulation}, which showcases the need for patchwise estimators discussed in Remark~\ref{rem:semi_local_common_mesh} if the construction is not carefully done.

\begin{figure}[htb!]
\centering
\includegraphics[width=0.85\linewidth]{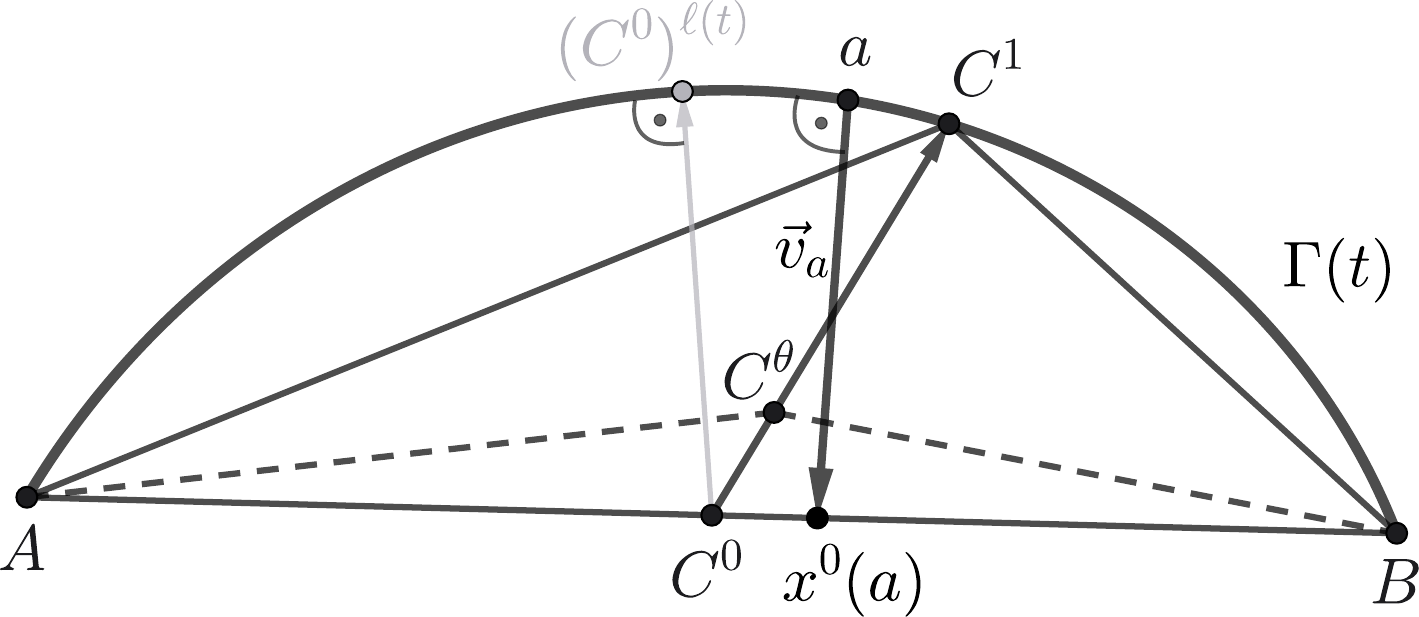}
\caption{Illustration of a two-dimensional $\theta$-dependent triangulation $\Gamma_h^\theta(t)$, where the refined but not lifted node $C^0$ is mapped onto $C^1$. The lifted node $(C^0)^{\ell(t)}$ shows the possible mismatch. Further the unlift of a general point $a \in \Ga(t)$ is given based on the normal $v_a$ of $\Gamma(t)$ which shows for different $\theta$ that the point unlifts to the element $AC^\theta$ for large $\theta$ but to the element $C^\theta B$ for small $\theta$.}
\label{fig:mismatch_common_triangulation}
\end{figure}

We describe the function $w_h^\theta \in S_h(\Gamma_h^\theta(t))$ as in the stationary case \cite[Lemma~5.5]{KL25} but with the nodes as discussed above. Thus the linear transformation is purely defined by the change in node positions and all nodal values are fixed.
If we identify $w_h^0 := \underline{U_h^{n-1}}$ and then following our argument $w_h^1 := \mathcal{I}_{n-1}^{\comesh} \underline{U_h^{n-1}}$ we can rewrite
\begin{align}
\label{eq:coarse_proof_fdm_thm}
(\mathcal{I}_{n-1}^{\comesh} \underline{U_h^{n-1}})^{\ell^{\comesh}} - (\underline{U_h^{n-1}})^{\ell^{n-1}(t)} =   (w_h^1)^{\ell^1} -(w_h^0)^{\ell^0} = \int_0^1 \frac{\d}{\d\theta} (w_h^\theta)^{\ell^\theta} \d\theta.
\end{align}
Similar to \cite[Lemma~5.5]{KL25}, one can explicitly differentiate and bound each contribution separately,
\begin{align*}
\frac{\d}{\d\theta} (w_h^\theta)^{\ell^\theta} = (\nabla_{\Gamma_h^\theta} w_h^\theta)^{\ell^\theta} \cdot \partial_\theta x^\theta(a) + (\partial_\theta w_h^\theta)(x^\theta(a)) 
\end{align*}
Some bounds follow immediately but for others additional analysis is required. In particular $\partial_\theta x^\theta(a)$, where $x^\theta \in \Ga_h^\theta$ is the unlift of some point $a$ onto the intermediate triangulation $\Ga_h^\theta$. Other than the possible element change in the common triangulation 
the $\theta$-dependent movement of the point $x^\theta$ can be described as determining the intersection of two lines, both moving linearly in terms of $\theta$ and with identical asymptotic velocities. Note that the nodal velocities are determined by the definition of the affine transformation of the vertices. For completeness we argue that our construction fulfils $x_j^1(t) - x_j^0(t) = \mathcal{O}(h^2)$ verifying the velocity order. 

For illustration we base the following argument for a 2D element, however the same holds for the surface setting. Many of the following quantities are also presented in Figure~\ref{fig:mismatch_common_triangulation}.

Given two nodes $A,B \in \Gamma_h^0$ of an element marked by NVB, with the resulting refined but not lifted node $C^0 := \frac{A+B}{2}$. Assume that this node is later required for refinement at time $t^n$. Then following the construction in Section~\ref{sec:smallest_common_triangulation} we first determine the midpoint at time $t^n$ which is given by $M := \frac{A^n+B^n}{2}$, where $A^n := G(A,t,t^n)$ and $B^n := G(B,t,t^n)$ (marked by a cross in Figure~\ref{fig:common_triangulation} after refinement). Next we lift it to $\Ga(t^n)$ resulting in $M^{\ell^n}$ which yield the vertices at time $\Ga_h^n$.
Then to arrive with the nodal position of the common triangulation we pull it back to time $t$ resulting in $C^1 := G(M^{\ell^n},t^n,t)$.

To compare $C^1-C^0$ we introduce the following extension of the flow map onto the tubular region $\mathcal{U}_\epsilon(t) \supset \Ga(t)$. By the assumptions on $\mathcal{U}_\epsilon(t)$ and the regularity of $\Ga(t)$ in Section~\ref{ch:surfacePDE} we can define the extended flow map for any $x \in \mathcal{U}_\epsilon(t)$ by $\tilde{G}(x,t,s) := G(x^{\ell(t)},t,s)$ via the closest point projection \eqref{eq:uniquedecom}. The regularity of $\tilde{G}$ is governed by the regularity of $G(\cdot,t,s) \in C^3$ in the spatial variable and the closest point projection, which is spatially $C^2$ by our assumption on $\Ga(t) \in C^3$. Thus $\tilde{G}(\cdot,t,s) \in C^2(\mathcal{U}_\epsilon(t))$.

Using the extension we can identify $G(M^{\ell^n},t^n,t) = \tilde{G}(M,t^n,t)$, which we expand by Taylor expansion in the first argument around the points $A^n$ and $B^n$
\begin{align*}
C^1  = \tilde{G}(\frac{A^n+B^n}{2},t^n,t) = &\frac{1}{2} \big(\tilde{G}(A^n,t^n,t) + D_x\tilde{G}(A^n,t^n,t)(A^n-M) + \frac{1}{2} D^2_x\tilde{G}(\xi_A,t^n,t)(A^n-M)^2 \big)  \\
+ &\frac{1}{2} \big(\tilde{G}(B^n,t^n,t) + D_x\tilde{G}(B^n,t^n,t)(B^n-M) + \frac{1}{2} D^2_x\tilde{G}(\xi_B,t^n,t)(B^n-M)^2 \big).
\end{align*}
Where $\xi_A,\xi_B \in [A^n,B^n]$ are points on the line segment between $A^n$ and $B^n$. Due to our construction we simplify by $A^n-M = \frac{A^n-B^n}{2} = -(B^n-M)$, use the bound $A^n-B^n = \mathcal{O}(h)$, the identity $\tilde{G}(x,t^n,t) = G(x,t^n,t)$ for $x \in \Ga(t)$, and the regularity of $\tilde{G}$ 
\begin{align*}
C^1 = \frac{G(A^n,t^n,t) + G(B^n,t^n,t)}{2} + c h (D_x\tilde{G}(A^n,t^n,t)-D_x\tilde{G}(B^n,t^n,t)) + \mathcal{O}(h^2).
\end{align*}
with a constant $c >0$.
But $G$ is a diffeomorphism, thus $G(A^n,t^n,t) = A$ and $G(B^n,t^n,t) = B$. If we additionally apply the mean-value-theorem on $D_x\tilde{G}$, which is applicable as the linear path $A^n$ to $B^n$ is contained in $\mathcal{U}_\epsilon(t)$ by the assumption on $h_0$ (see Remark~\ref{remark:h0 sufficiently small}), we can rearrange and simplify the inequality to
\begin{align*}
C^1-C^0 \leq  c h (A^n-B^n) + \mathcal{O}(h^2) \leq ch^2.
\end{align*}
which concludes that the difference of the intermediate node positions is quadratic and thus its corresponding $\theta$-velocity, i.e.\, $\partial_\theta ((1-\theta)C^0 + \theta C^1) = \mathcal{O}(h^2)$.

Following similar computations as in \cite{KL25} this implies elementwise $\partial_\theta x^\theta(a) = \mathcal{O}(h_T^2)$ with $h_T$ being the local element size of the coarser triangulation (i.e.\ $\Ga_h^{n-1}(t)$ or $\Ga_h^{n}(t)$ for the second interpolation error respectively). 
Now combining the bounds for $\partial_\theta x^\theta(a)$ and bounds for the affine transformation \cite[Eq.~(5.12)]{KL25} we can apply Minkowski's integral inequality and estimate the derivative terms of \eqref{eq:coarse_proof_fdm_thm}. This yields the first inequality of \eqref{eq:coarse_proof_int_bound} which is then represented on a computable mesh via norm-equivalence arguments. Summation over all elements gives the bound by $\eta_\textnormal{trans}^n$

The second interpolation-type bound for $\underline{\IntRef u_h^{n-1}}$ follows in the same manner, however only those elements, where vertices where coarsened contribute to this error. Again we represent the error for $\underline{\IntRef u_h^{n-1}}$ with the same arguments as in \eqref{eq:coarse_proof_int_bound} on $\Gamma_h^n$. Summation over all elements gives the bound by the second sum of $\eta_\textnormal{trans}^n$.


The $H^1$-seminorm estimate, follow the same structure as the $L^2$-error, and thus can be shown just as described in \cite[Lemma~5.5]{KL25}. The non-trivial element correspondence leads to the same-order bounds. 

\end{proof}

\subsubsection{Temporal residuals}
\label{sec:res_temp}
The reliability of the temporal residuals to their respective indicators can be shown with standard tools, however the efficiency proof is only shown up to oscillation, coarsening, mesh-transfer, high-order terms and the velocity-induced indicators.
\begin{proposition}
\label{prop:temp}
	For $0 < t^n \leq T$ the temporal residuals $\mathcal{R}_\tau$ and $\mathcal{R}^\star_\tau$ of \eqref{eq:res_decom} are bounded from above by $\eta_\tau^n$ and respectively $\zeta_\tau^n$ given in \eqref{eq:indicator - temporal}, \eqref{eq:indicator - temporal2}
	\begin{align}
		\label{eq:indicator upper bound for R_tau}
		\refstepcounter{equation}\tag{\theequation a}
		\|\mathcal{R}_\tau\|_{L^2(t^{n-1},t^n;H^{-1}(\Ga(\cdot)))}^2 \leq c_1 (C_\kappa^n)^2 \, \tau^n  \, (\eta_{\tau}^n)^2 . \\
		\refstepcounter{equation}\tag{\theequation a}
		\label{eq:indicator upper bound for R_taustar}
		\|\mathcal{R}_\tau^\star\|_{L^2(t^{n-1},t^n;H^{-1}(\Ga(\cdot)))}^2 \leq c_1 C_\kappa^n \, \tau^n  \, (\zeta_{\tau}^n)^2 .
	\end{align}

	Additionally, with $\tau \leq \tau_0$, where $\tau_0 >0$ is sufficiently small,  we have the local lower bound between temporal indicator $\eta_\tau^n$ \eqref{eq:indicator - temporal}  and the error $u-\baruhtaulift$,
	\begin{equation}
	\label{eq:final_temporal_bound_2}
	\refstepcounter{equation}\tag{\theequation b}
		\begin{aligned}
		(\tau^n)^{1/2} \eta_\tau^n \leq  c_2 C_\kappa^n \Big(\|u- \baruhtaulift&\|_{X(t^{n-1},t^n)} + \|\osc^n\|_{L^2(t^{n-1},t^n;H^{-1}(\Ga(\cdot)))} \\
		 &+ (\tau^n)^{1/2} (\zeta_\tau^n + \mathcal{G}_h^n + \mathcal{G}_v^n + \zeta_\textnormal{move}^n + \eta_\textnormal{c}^n+\eta_\textnormal{trans}^n)  \Big) .
		\end{aligned}
	\end{equation}		
	The constants $c_1,c_2 > 0$ are independent of $h$ and $\tau^n$, but depend on $Q_{t^{n-1},t^n}$, additionally $c_2$ depends on the shape-regularity constant $\varrho^n$ of $\Ga_h^n$.
\end{proposition}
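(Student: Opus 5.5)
For the upper bounds \eqref{eq:indicator upper bound for R_tau} and \eqref{eq:indicator upper bound for R_taustar} I will write out the time-interpolation difference explicitly. From \eqref{eq:fully discrete solution definition} at $s=t^n$,
\begin{align*}
\baruhtaulift(x,t,t^n)-u_h = \Big(\frac{t-t^{n-1}}{\tau^n}-1\Big)\big(U_h^n-\IntRef U_h^{n-1}\big)^{\ell^n},
\end{align*}
and the prefactor is bounded by $1$ in modulus.

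For $\mathcal{R}_\tau$, Cauchy--Schwarz gives
\begin{align*}
|\langle\mathcal{R}_\tau,w\rangle| \le \Big|1-\tfrac{t-t^{n-1}}{\tau^n}\Big|\,|(U_h^n-\IntRef U_h^{n-1})^{\ell^n}|_{H^1(\Ga(t^n))}\,|\underline{w}(t^n)|_{H^1(\Ga(t^n))}.
\end{align*}
The lift norm equivalence \eqref{eq:norm_equiv} moves the first factor to $\Ga_h^n$, where it becomes $\eta_\tau^n$ after summing elementwise. Proposition~\ref{prop:norm_equiv_partialmh}, applicable since $\partialmh\underline{w}=0$, shifts the second factor back to $\|w\|_{H^1(\Ga(t))}$ at the price of a factor $C_\kappa^n$. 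I then square and integrate over $(t^{n-1},t^n)$; the factor $\int(1-\frac{t-t^{n-1}}{\tau^n})^2\,\d t=\tau^n/3$ produces the $\tau^n$.

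The bound for $\mathcal{R}_\tau^\star$ is identical, using the $L^2$-pairing and $\|\underline w(t^n)\|_{L^2}\le c\,\|w(t)\|_{H^1}$. The weight $(\nabla_{\Ga_h^n}\cdot V_h^n)^{\ell^n}$ is kept inside the norm, so the estimate gives $\zeta_\tau^n$ directly.

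For the lower bound \eqref{eq:final_temporal_bound_2} I will follow the Verf\"urth-type argument of \cite{Verfuerth2003}, as adapted in \cite{KL25}. I test the full residual with a specially chosen $w$, namely the lift of $\underline{e_\tau}$ with $e_\tau:=U_h^n-\IntRef U_h^{n-1}$ transported by $v_h$, multiplied by the time weight $\psi(t):=1-\frac{t-t^{n-1}}{\tau^n}$. For this test function, $\int_{t^{n-1}}^{t^n}\langle\mathcal{R}_\tau,w\rangle\,\d t=\int\psi^2\,\d t\;|e_\tau^{\ell^n}|^2_{H^1}\simeq \tau^n(\eta_\tau^n)^2$, after using \eqref{eq:norm_equiv}, which is coercive. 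I then express $\mathcal{R}_\tau$ through the decomposition \eqref{eq:res_decom} as
\begin{align*}
\mathcal{R}_\tau=\mathcal{R}(u_{h,\tau})-\Rcoarse-\osc^n-\mathcal{R}_h-\mathcal{R}_\tau^\star-\mathcal{R}_{\text{geo}}-\mathcal{R}_{\textnormal{move}}-\mathcal{R}_v.
\end{align*}
Each term is then bounded in the $L^2(H^{-1})$ dual norm against $\|w\|_{L^2(H^1)}\le c\,C_\kappa^n(\tau^n)^{1/2}\eta_\tau^n$, as follows.
\begin{itemize}
\item[--] $\mathcal{R}(u_{h,\tau})$ is controlled via \eqref{eq:res_leq_error}, restricted to the interval, by the error.
\item[--] $\osc^n$ remains as it is.
\item[--] $\Rcoarse$, $\mathcal{R}_\tau^\star$, $\mathcal{R}_{\text{geo}}$, $\mathcal{R}_v$ and $\mathcal{R}_{\textnormal{move}}$ are handled by Propositions~\ref{prop:coarse_res}, \ref{prop:geo_res}, \ref{prop:move_res} and the first part of the present proposition. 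This produces $\eta_{\rm c}^n,\eta_{\rm trans}^n,\mathcal{G}_h^n,\mathcal{G}_v^n,\zeta_\tau^n,\zeta_{\rm move}^n$, together with the extra terms $h\eta_h^n$ and $\kappa^n\tau^n(\eta_\tau^n+\zeta_\tau^n)$.
\end{itemize}
The contribution $\kappa^n\tau^n\eta_\tau^n$ is absorbed into the left-hand side for $\tau\le\tau_0$. This is the origin of the smallness assumption on $\tau_0$.

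The main obstacle is the spatial residual $\mathcal{R}_h$ and the leftover $h\eta_h^n$. By \eqref{eq:indicator upper bound for R_h} this term is bounded by $\eta_h^n$, which the lower bound \eqref{eq:indicator lower bound for R_h} in turn bounds by $\|\mathcal{R}_h\|$. However, it is not itself an error term, so I cannot simply leave it on the right-hand side. I will try to exploit the structure of the test function: $\mathcal{R}_h$ is independent of $t$ on the interval (it is evaluated at $t^n$ on $W^n$), while $\psi$ has nonzero mean. I therefore plan to combine two tests, $w$ with weight $\psi$ and $w$ with weight $1$, in the manner of \cite[Section~5]{Verfuerth2003}. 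A suitable linear combination of the form $\psi-\frac{1}{2}$ cancels the time-constant $\mathcal{R}_h$-contribution exactly while keeping the $\int\psi(\psi-\frac12)>0$ coercivity of the $\mathcal{R}_\tau$-term. Should exact cancellation fail because of the time-dependent transport of $w$, I would fall back on bounding $\|\mathcal{R}_h\|$ by the error plus $\|\mathcal{R}_\tau\|$ and absorbing the latter. Finally, replacing $u_{h,\tau}$ by $\baruhtaulift$ in the error norm costs only $\Rcoarse$-type terms, i.e.\ $\eta_{\rm c}^n+\eta_{\rm trans}^n$.
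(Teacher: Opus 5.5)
Your upper bounds for $\mathcal{R}_\tau$ and $\mathcal{R}_\tau^\star$ coincide with the paper's argument. The lower bound has one step that fails as written. You claim $\|w\|_{L^2(t^{n-1},t^n;H^1(\Ga(\cdot)))}\le c\,C_\kappa^n(\tau^n)^{1/2}\eta_\tau^n$ for the weighted, transported lift of $e_\tau=U_h^n-\IntRef U_h^{n-1}$. But $\eta_\tau^n$ controls only the seminorm $|e_\tau|_{H^1}$. The dual-norm estimates you invoke for $\mathcal{R}(u_{h,\tau})$, $\Rcoarse$, $\osc^n$, $\mathcal{R}_{\textnormal{move}}$, $\mathcal{R}_v$ and $\mathcal{R}_\tau^\star$ need the full $H^1(\Ga(t))$-norm of $w$. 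On a closed surface $\|e_\tau\|_{L^2}$ is not bounded by $|e_\tau|_{H^1}$, for instance when $e_\tau$ has a large constant part.

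The missing step is the one the paper inserts. Since $\mathcal{R}_\tau$ only sees gradients, replace $e_\tau^{\ell^n}$ by $e_\tau^{\ell^n}-\hat e$, where $\hat e$ is its mean over $\Ga(t^n)$. Constants are invariant under the $v_h$-transport, and Poincar\'e on $\Ga(t^n)$ together with Proposition~\ref{prop:norm_equiv_partialmh} gives
\[
\|w\|_{L^2(H^1)}\le c\,C_\kappa^n(1+C_P^2)^{1/2}(\tau^n)^{1/2}\eta_\tau^n .
\]
Subtract this fixed mean before transporting, not a $t$-dependent mean on $\Ga(t)$, so that the time-weight structure used below is preserved exactly.

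For removing $\eta_h^n$ you take a genuinely different route from the paper. The paper tests with $-\psi$ times the transported mean-free lift and keeps $\eta_h^n$ on the right-hand side via \eqref{eq:indicator upper bound for R_h}. It then substitutes the separately derived spatial bound \eqref{eq:final_spatial_bound}, which avoids $\eta_\tau^n$, in the style of \cite[Proposition~5.7]{KL25}.

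Your weight $\psi-\tfrac12$ works, and the cancellation is exact. Let $\tilde e$ be the $v_h$-transport of $e_\tau^{\ell^n}-\hat e$ and set $w(t)=-(\psi(t)-\tfrac12)\tilde e(t)$. Then $\underline{w}(t^n)=-(\psi(t)-\tfrac12)(e_\tau^{\ell^n}-\hat e)$ identically. Also $\partial_h^\bullet\baruhtau$, $U_h^n$ and $F_h^n$ are constant on $(t^{n-1},t^n]$. Hence $\int\langle\mathcal{R}_h,w\rangle\,\d t=0$, while $\int\psi(\psi-\tfrac12)\,\d t=\tau^n/12>0$.

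So the transport obstruction you worry about does not occur. That matters, because your fallback is circular: bounding $\|\mathcal{R}_h\|$ by the error plus $\|\mathcal{R}_\tau\|$ and absorbing fails, since $\|\mathcal{R}_\tau\|_{L^2(H^{-1})}$ is comparable to $(\tau^n)^{1/2}\eta_\tau^n$ with an $O(1)$ constant.

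You also leave the $h\eta_h^n$ term from $\mathcal{R}_{\textnormal{geo}}$ unaddressed. It disappears by the same mechanism, because $\mathcal{R}_{\textnormal{geo}}$ likewise pairs $t$-independent data at $t^n$ with $\underline{w}(t^n)$. Alternatively, use \eqref{eq:indicator lower bound for R_h} together with the decomposition, and absorb $h\eta_\tau^n$ for $h\le h_0$.

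Your route buys a self-contained temporal bound that does not depend on the spatial efficiency argument. The paper's two-step route produces \eqref{eq:final_spatial_bound} along the way, which it needs anyway for Theorem~\ref{thm:upper_lower}(b). Your closing conversion from $\|u-u_{h,\tau}\|_X$ to $\|u-\baruhtaulift\|_X$ via coarsening-type terms is sound. It handles a mismatch the paper's proof leaves implicit: that proof only yields the former norm, while the statement uses the latter.
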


\begin{proof}
We start with the upper bounds.
 First recall that, as in \cite[Eq.~6.3]{Verfuerth2003} we can rewrite the difference 
\begin{align*}
\baruhtaulift(\cdot,t,t^n)-u_h^n = \Big(\frac{t-t^{n-1}}{\tau^n}-1\Big)(u_h^n-\IntRef u_h^{n-1}).
\end{align*}
Thus we can directly bound using Cauchy--Schwarz inequality and lift equivalence \eqref{eq:norm_equiv}: 
\begin{align*}
|\langle \mathcal{R}_\tau , w \rangle| &\leq c |U_h^n-\IntRef U_h^{n-1} |_{H^1(\Ga_h^n)}  |\underline{w}|_{H^1(\Gamma_h^n(t))}, \\
|\langle \mathcal{R}^\star_\tau , w \rangle| &\leq c \|(\nabla_{\Gamma_h^n} \cdot V_h^n)(U_h^n-\IntRef U_h^{n-1}) \|_{L^2(\Ga_h^n)}  \|\underline{w}\|_{L^2(\Gamma_h^n(t))}.
\end{align*}
Determining the dual norm of the temporal residuals and then using norm equivalence under movement and time integration directly yields the indicators, i.e.\ \eqref{eq:indicator upper bound for R_tau} and \eqref{eq:indicator upper bound for R_taustar} with the $\kappa$-dependency as in Lemma~\ref{prop:norm_equiv_partialmh}.


The lower bound is again based on choosing the correct test function \cite[Section~7]{Verfuerth2003}. By choosing $w = \baruhtaulift(\cdot,t,t)-\underline{u_h^n}$ we immediately have
\begin{align*}
\int_{t^{n-1}} ^{t^n} \langle \mathcal{R}_\tau, w \rangle \d t= \int_{t^{n-1}} ^{t^n} |\underline{w}(t^n)|^2_{H^1(\Gamma_h^n)} \d t \geq c (\eta^n_\tau)^2 \int_{t^{n-1}} ^{t^n} \Big(\frac{t-t^{n-1}}{\tau^n}-1\Big)^2 \d t = c \frac{\tau^n}{3} (\eta^n_\tau)^2
\end{align*}
where we used the upper bound for the norm equivalence under lift \eqref{eq:norm_equiv}.

Now the lower bound is derived by first noting that the temporal residual $\mathcal{R}_\tau$ is invariant under constant shifts, that is $\langle \mathcal{R}_\tau,w\rangle =\langle \mathcal{R}_\tau,w+c\rangle$ for an arbitrary constant $c \in \R$. Thus we subtract the mean of $w$, labelled $\hat{w}:= \int_\Ga(t) w \in \R$ which allows us to use Poincarés inequality in the following argument:
\begin{align*}
\frac{\tau^n}{3} (\eta^n_\tau)^2 &\leq \int_{t^{n-1}} ^{t^n} \langle \mathcal{R}_\tau, w - \hat{w} \rangle \d t \\
&= \int_{t^{n-1}} ^{t^n} \langle \mathcal{R} - \mathcal{R}_h - \mathcal{R}_\text{geo} - \mathcal{R}_\tau^\star - \mathcal{R}_\textnormal{move} - \mathcal{R}_v - \osc^n - \mathcal{R}_c, w - \hat{w} \rangle \d t \\
&\leq c C_\kappa^n (1+C_P^2)^{1/2} \Big(\frac{\tau^n}{3}\Big)^{1/2} \eta_\tau^n \big(\|u-u_{h,\tau}\|_{X(t^{n-1},t^n)} + \|\osc \|_{L^2(t^{n-1},t^n;H^{-1}(\Ga(\cdot)))} \\
&\qquad \qquad \qquad + (\tau^n)^{1/2}(\eta_h^n  + \eta_\textnormal{c}^n +\eta_\textnormal{trans}^n + \mathcal{G}_h^n  + \mathcal{G}_v^n + \zeta_\tau^n + \zeta_\textnormal{move}^n) \big) 
\end{align*}
Which is based on the same ideas as in \cite[Section~5.7]{KL25} and employs the residual decomposition, a duality argument, and the derived dual norms of the prior chapters. Note that we used $h \leq h_0$ and $\tau \leq \tau_0$ to remove additional high-order terms, in particular geometric $h$-powers and the $\tau^2$ scaled terms of the movement indicator, in the arguments. 

As for the non-moving case, in the same style as \cite[Proposition~5.7]{KL25}, utilising the bound to the temporal indicator it is possible to bound the spatial indicator in terms of all other indicators but the temporal $\eta_\tau^n$, resulting in the bound
\begin{align}
\label{eq:final_spatial_bound}
	\refstepcounter{equation}\tag{\theequation b}
(\tau^n)^{1/2} \eta_h^n \leq c &C_\kappa^n \big(\|u-u_{h,\tau}\|_{X(t^{n-1},t^n)} + \|\osc^n \|_{L^2(t^{n-1},t^n;H^{-1}(\Ga))}  \\
&+ (\tau^n)^{1/2}(\zeta_\tau^n + \eta_\textnormal{c}^n + \eta_\textnormal{trans}^n + \mathcal{G}_h^n  + \mathcal{G}_v^n + \zeta_\textnormal{move}^n)\big). \nonumber
\end{align}
Note that the constant $c>0$ is independent of $h$ but depends on the constant from the norm equivalence. This shows that the spatial indicator is efficient up to the oscillation, high-order geometric terms, coarsening, mesh-transfer, and the indicators related to the movement of the surface $\zeta_\textnormal{move}^n$ and $\zeta_\tau^n$.

But then immediately the same follows for the temporal indicator. \hfill
\end{proof}

\begin{remark}
We note that the additional control of the velocity scaled temporal indicator yields an explicit tool to ensure that the mass transport at elements of strong local changes in the velocity only introduces small errors.
The efficiency, however, seems to be challenging as we can not test with a similar argument as the velocity divergence $\nabla_{\Ga_h^n} \cdot V_h^n$ is constant elementwise but discontinuous globally. It would be natural to try showing efficiency via bubble functions, however the scaling is off between efficiency and reliability. Additionally the uncontrollable sign of the argument is difficult to manage.
%
\end{remark}

\subsection{Proof of Theorem~\ref{thm:upper_lower}}
The proof of the upper (reliability) bound \eqref{eq:reliability estimate} begins with the estimate \eqref{eq:res_equiv_2}, which provides an upper bound of the error in terms of the residual. We continue by splitting the full residual via our decomposition \eqref{eq:res_decom} and combining the upper bounds of each term, i.e.\ the bound to the spatial residual \eqref{eq:indicator upper bound for R_h}, the bound to the geometric residuals \eqref{eq:indicator upper bound for R_geo} \& \eqref{eq:indicator upper bound for R_v}, the bound of the movement residual \eqref{eq:indicator upper bound for R_move}, the bound of the coarsening residual \eqref{eq:indicator upper bound coarse} and finally the bound of the temporal residuals \eqref{eq:indicator upper bound for R_tau} \& \eqref{eq:indicator upper bound for R_taustar}. Note that the oscillation \eqref{eq:oscillation} is split by triangle inequality but not further bound.

The proof of the lower (efficiency) bound \eqref{eq:efficiency estimate} begins with the estimate \eqref{eq:res_leq_error}, which provides a lower bound for the error in terms of the residual. We continue by combining the results for the spatial indicator \eqref{eq:indicator lower bound for R_h} \& \eqref{eq:final_spatial_bound} and for the temporal residual \eqref{eq:final_temporal_bound_2}, whilst reusing the upper bounds for the other indicators.

This finishes the proof of Theorem~\ref{thm:upper_lower}. \hfill \qedsymbol
\section{Numerical experiments}
\label{sec:numerical_experiments}

We investigate the derived error indicators \eqref{eq:indicator - full} and behaviour of a simple adaptive routine. The numerical experiments illustrate and complement the theoretical result. 
The implementation is based on the fully vectorized loop-free assembly $\ell$FEM package \cite{ellFEM}, which provides efficient computation of bulk and surface assembly. In particular it computes the mass, stiffness and velocity-scaled mass matrices in almost linear time-complexity.
All experiments use NVB for both refinement and coarsening (we modified the implementations of \cite{CoarsenNVB}), the bulk criterion \cite{Doerfler1996} for marking, and initial meshes are generated using DistMesh \cite{distmesh}.
We state a general structure for an adaptive algorithm, however note that no algorithmic results (like convergence) are given. The numerical experiments show the theoretically expected behaviour shown in Theorem~\ref{thm:upper_lower}.

%

\textbf{Algorithmic structure,}
We briefly outline a conceptual adaptive space--time algorithm illustrating how the proposed indicators can be employed.
Since the focus of this work is the a posteriori analysis rather than implementation aspects, we omit several algorithmic details and only describe the control logic.
The algorithm aims to ensure good convergence properties in terms of a tolerance $\TOL$ which bounds all indicators and thus relates to the residual and error by Theorem~\ref{thm:upper_lower}.
\begin{itemize}
\item Initialization: Determine the base mesh $\Ga_h^0$ (such that the closest point projection \eqref{eq:uniquedecom} exists for all times) and an initial mesh (such that the initial error is non-dominant)
\item For each timestep
\begin{enumerate}
\item Coarsen prior mesh as long as $\eta_\textnormal{c} \leq \TOL$
\item While $\zeta_\tau+\zeta_\textnormal{move} > \theta_\zeta \TOL$ reduce time-step size
\item Solve--Estimate--Mark--Refine until $\eta_h+\eta_g\leq \TOL$
\item Check temporal error $\eta_\tau \leq \theta_\tau \TOL$ and recheck $\zeta_\tau+\zeta_\textnormal{move} < \theta_\zeta \TOL$
\begin{enumerate}
\item If true, check mesh-transfer errors from coarsening and refinement $\eta_\textnormal{trans} \leq \theta_\textnormal{trans} \TOL$
\begin{enumerate}
\item If true, store solution and possibly increase time-step size for next step
\item If false, check what part of the transfer error dominates. If its the coarsening part, reduce coarsened elements, if it is the refinement part, return to previous step and solve on a finer mesh, which is used to restart at (1).
\end{enumerate}
\item If false, reduce time-step size and return to (2)
\end{enumerate}
\end{enumerate}
\end{itemize}
It is common in adaptive algorithms to allocate the total error budget unevenly among different indicators, by scaling individual tolerances. For simplicity we do not define a total tolerance and, mostly for illustrative purposes scale some of the indicators by  positive parameters $ \theta_\tau, \theta_\zeta, \theta_\textnormal{trans}$ to control the temporal, transport and mesh-transfer indicators, respectively.

In general, one can frame step (2) as a predictive transport control. Step (4) ensures that for a accepted temporal stepsize, based on $\eta_\tau$, not only the temporal estimator for the consistency of the PDE, but also the transport error, is small for the refined mesh. Additionally in Step (4) we check whether the mesh-transfer spoils the solution. If it does, depending on whether the coarsening transfer or the refinement transfer dominates, we would either reduce the number of coarsened elements in the current timestep, or respectively, refine additional elements in the previous timestep. 

 The spatial refinement loop is based on the ideas of \cite{Doerfler1996} combined with lifting just as described in \cite[Section~7]{KL25}.

We fixed some parameters for all upcoming proofs, namely if not specified otherwise $\theta_\tau = 1$, we always ensure that $\theta_\zeta = 0.75 \theta_\tau$, i.e. at most three-quaters of the temporal error budget, and we track if $\eta_\textnormal{trans}$ has a strong effect on the solution via $\theta_\textnormal{trans} = 5$.

Further the coarsening is implemented such that at most one refinement level can be coarsened within a single timestep. This could lead to slower coarsening, however strongly simplifies node tracking and the numerical experiments demonstrate that it is sufficient in practice. 
Also the increase and decrease of time-step size is fixed to always double or halve respectively.
Although the theoretical algorithm includes a correction mechanism based on the mesh-transfer indicator, it was not implemented in the numerical experiments. Instead, we monitor $\eta_\textnormal{trans}$ throughout all experiments, to check whether corrections would have been necessary. Except for the experiment in Section~\ref{sec:dumbell}, the transfer indicator remained non-dominant.

Finally we localized some of the indicators \eqref{eq:indicator - full}. In particular, rather than working with the spatially global $\kappa^n$ in the movement indicator \eqref{eq:indicator - move}, and coarsening indicator\eqref{eq:indicator - coarse}, we computed the divergence $\nabla_\Ga \cdot V_h^n$ and the tensor $\mathcal{B}_h^n$ elementwise, thereby obtaining a separate factor for each triangle. This refinement is consistent with the theory, but was omitted from the proof of Theorem~\ref{thm:upper_lower} for the sake of readability.

\subsection{Convergence: Bouncing Ellipsoid}
\label{sec:ex1}
First analyse the convergence with the help of a manufactured solution of the PDE \eqref{eq:heat_strong} on a bouncing ellipsoid (similar to \cite[Eq.~5.47]{Dziuk2013FiniteEM}) given by the implicit level-set function
\begin{align*}
d(x,y,z,t) = \frac{x^2}{1+0.9 \sin(2 \pi t)} + y^2 + z^2 -1.
\end{align*}
The surface starts initially as a sphere and periodically grows/shrinks along the $x$-axis. Given the exact solution $u = xy \exp(t)$, we determined the right-hand side $f$ based on \eqref{eq:heat_strong}. 
In Figure~\ref{fig:ex1} we observe the convergence rate of $\TOL$ with respect to the $L^\infty(L^2)$- and $L^2(H^1)$-error. The $L^2(H^1)$-errors were computed with a sufficiently high quadrature rule. As expected the left graph of Figure~\ref{fig:ex1} shows, by construction of the algorithm, that the errors behave asymptotically linear with respect to the tolerance.

Additionally, we investigate the coarsening behaviour. We observe for a specific solution with $\TOL = 1$, the right-hand side graph of Figure~\ref{fig:ex1}, exhibits an exponential node decay, structurally following the exponential decay in the solution $u$ itself.
We further observe that the exponential decay in degrees of freedom is affected by the surface evolution. During the initial expansion of the surface, the decay is slower since additional vertices are required to maintain good control of the error. Whereas after $t = 0.5$, as the surface shrinks, and the decay rate increases accordingly.

\begin{figure}
    \centering
    \begin{subfigure}[t]{0.48\linewidth}
        \centering
%
\begin{tikzpicture}

\begin{axis}[%
width=0.8\linewidth,
height=5cm,
at={(0cm,0cm)},
scale only axis,
xmode=log,
xmin=0.1,
xmax=1.25,
xminorticks=true,
xlabel style={font=\color{white!15!black}},
xlabel={Tolerance},
ymode=log,
ymin=0.01,
ymax=0.3,
yminorticks=true,
axis background/.style={fill=white},
legend style={
  at={(0.03,0.97)},
  anchor=north west,
  legend cell align=left,
  align=left,
  draw=white!15!black
},
]
\addplot [color=black, mark=o, mark options={solid, black}]
  table[row sep=crcr]{%
1.25	0.08602706637774\\
1	0.0846630081854134\\
0.75	0.0836309086588545\\
0.5	0.0663537477822094\\
0.3	0.0402140547602\\
0.2	0.0324469794426548\\
0.15	0.024640416353672\\
0.1	0.0150057430304665\\
};
\addlegendentry{$\|e_h\|_{L^\infty(L^2)}$}

\addplot [color=black, mark=asterisk, mark options={solid, black}]
  table[row sep=crcr]{%
1.25	0.140591113782617\\
1	0.125049735505677\\
0.75	0.106343742567335\\
0.5	0.0786120481647433\\
0.3	0.0474530880168746\\
0.2	0.036016283869202\\
0.15	0.0275419446123431\\
0.1	0.0176333645430056\\
};
\addlegendentry{$\|e_h\|_{L^2(H^1)}$}

\addplot [color=gray, dashed]
  table[row sep=crcr]{%
1.25	0.15\\
1	0.12\\
0.75	0.09\\
0.5	0.06\\
0.3	0.036\\
0.2	0.024\\
0.15	0.018\\
0.1	0.012\\
};
\addlegendentry{$\mathcal{O}(TOL)$}

\end{axis}
\end{tikzpicture}%
    \end{subfigure}
    \hfill
    \begin{subfigure}[t]{0.48\linewidth}
        \centering
%
\begin{tikzpicture}

\begin{axis}[%
width=0.8\linewidth,
height=5cm,
at={(0cm,0cm)},
scale only axis,
xmin=0,
xmax=1,
xlabel style={font=\color{white!15!black}},
xlabel={$t$},
ymode=log,
ymin=200,
ymax=5000,
yminorticks=true,
ylabel style={font=\color{white!15!black}},
ylabel={degrees of freedom},
axis background/.style={fill=white}
]
\addplot[const plot, color=black, line width=1.0pt, mark=o, mark options={solid, black}, forget plot] table[row sep=crcr] {%
0	3059\\
0.03125	3059\\
0.09375	2548\\
0.21875	2201\\
0.34375	1889\\
0.40625	1658\\
0.46875	1530\\
0.53125	1356\\
0.59375	1139\\
0.71875	941\\
0.78125	730\\
0.84375	560\\
0.90625	448\\
1	363\\
};
\end{axis}
\end{tikzpicture}%
    \end{subfigure}
    \caption{(left) $L^\infty(L^2)$- and $L^2(H^1)$-errors for a set of tolerances. (right) Number of nodes over time for an exponentially decaying solution ($\TOL = 1$).}
    \label{fig:ex1}
\end{figure}


For comparability we computed the $L^2$-errors at the discrete timesteps for a non-adaptive uniform setting, with $\tau = 0.1$ and an initial mesh with approximately $2000$ almost uniformly distributed vertices, and the proposed adaptive routine with $\TOL = 0.7$.
In Figure~\ref{fig:ex1_adap_vs_nonadap} the $L^2$-errors over time corresponding to the right axis, are marked with triangles and the degrees of freedom (dof) over time corresponding to the left axis, are marked with circles. In black the adaptive results are shown, and in grey the non-adaptive results. The markers are set only for discrete timesteps at which the adaptive or uniform routine determined a discrete solution.
Note that similar to Figure~\ref{fig:ex1} (right) the exponential decay in dofs is visible for the adaptive version.

Initially, the adaptive routine demands more dof than the uniform discretisation; however, after $t =0.6$, the number of dof is reduced below that of the uniform mesh. 
Although, the adaptive routine exhibits a larger pointwise $L^2$-error for $t>1$, its overall $L^\infty(L^2)$-error is smaller than the uniform approach, shown by the dashed lines in Figure~\ref{fig:ex1_adap_vs_nonadap}.
This behaviour is consistent with the theory, as Theorem~\ref{thm:upper_lower} guarantees control of the $L^\infty(L^2)$-error rather than the pointwise $L^2$-error.
In particular, once the error is sufficiently small during the initial phase, adaptivity allows us to strongly reduce the dof while retaining good control of the $L^\infty(L^2)$-error. Note that this not only applies to spatial refinements but also the temporal time-step size, which is initially slightly smaller than the uniform approach, but later substantially larger without an error increase with respect to the error notion of Theorem~\ref{thm:upper_lower}. Note that similar behaviour is observed for the $L^2(H^1)$-norm.
\begin{figure}
    \centering
		\includegraphics[width=1\textwidth]{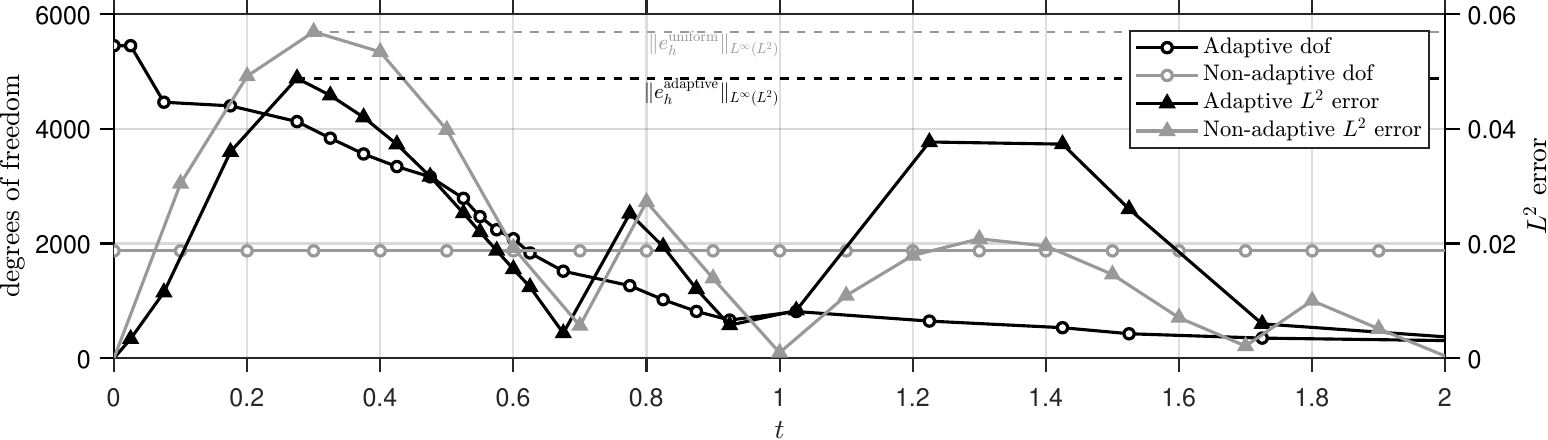}
        \caption{Double axis plot, with comparison of $L^2$-error (right $y$-axis) and degrees of freedom (left $y$-axis) over time for a uniform and adaptive solution of the bouncing ellipsoid with manufactured solution. The markers in general indicate for which discrete timesteps the uniform (in grey) and adaptive (in black) approximation, where computed. Additionally 
        the type of marker indicates that the corresponding $y$-axis is the $L^2$-error for triangles $\blacktriangle$ and respectively the degrees of freedom for circles $\circ$. The dashed lines indicate the global $L^\infty(L^2)$-error for the adaptive (in black) and uniform (in grey) approximation.}
        \label{fig:ex1_adap_vs_nonadap}
\end{figure}

\subsection{Dumbell with purely normal motion}
\label{sec:dumbell}
We analyse the experiment by Elliott and Styles \cite[Section~2]{elliott_styles_2012} which shows that due to the evolution of the surface nodes will start gathering along a strip. We highlight that for adaptive approaches, the coarsening allows us to limit the gathering by the choice of the initial baseline/coarsest mesh.

We define the moving domain by the level-set function
\[
d(x,y,z,t)
= x^2 + y^2
+ a(t)^2\, G\!\left(\frac{z^2}{L(t)^2}\right)
- a(t)^2,
\]
where
\[
G(s)=80\,s\left(s-\frac32\right),\qquad
a(t)=0.1+0.05\sin(2\pi t),\qquad
L(t)=1+0.2\sin(4\pi t).
\]
Now assuming that we have a purely normal motion as in \cite[Equation~2.1]{elliott_styles_2012}, the nodes of the dumbell move in a peculiar way. One observed \cite[Figure~1]{elliott_styles_2012} that such an example can lead to strongly non-uniform meshes. Adaptive strategies allow to weaken this effect.

We solve the PDE \eqref{eq:heat_strong} with $u_0 = 0$ and $f = 1$, i.e. adaptivity is driven by the geometry induced heat flow. For the non-adaptive setting we observe on the right side of Figure~\ref{fig:dumbell_adaptivity} strong gathering as in \cite[Figure~1]{elliott_styles_2012}.  
On the other hand adaptivity allows us to start with a very coarse base mesh, which allows to reduce the gathering effect by coarsening. It should be noted however that this effect is limited, as the base mesh has to be fine enough to resolve the geometry sufficiently well. But the coarsening yields a tool to limit the number of gathered nodes, by the number of initial nodes (as we can expect, that the error indicators, for elements in these dense areas, are small and marked for coarsening).
On the right side of Figure~\ref{fig:dumbell_adaptivity} the surface was initialized uniformly (based on DistMesh \cite{distmesh}) with $3024$ vertices. On the left side of Figure~\ref{fig:dumbell_adaptivity} the base mesh was given by $694$ nodes, due to adaptivity the adaptive solver with $\TOL = 2$ yields meshes with different number of nodes. At most $3908$ nodes were needed to resolve the problem with the provided tolerance. We can observe that the gathering effect is strongly reduced by comparing the non-adaptive and adaptive solutions at each timestep, however it is not fully avoidable. We want to remark however smaller tolerances would hide the gathering of the initial mesh even further and the effect can be expected to be neglectable asymptotically.
For better illustration of the mesh the final line in Figure~\ref{fig:dumbell_adaptivity}, i.e.\ the surface at time $t = 1$, is viewed from a slightly rotated position.

\begin{figure}[htbp!]
    \centering
    \includegraphics[width=0.9\textwidth]{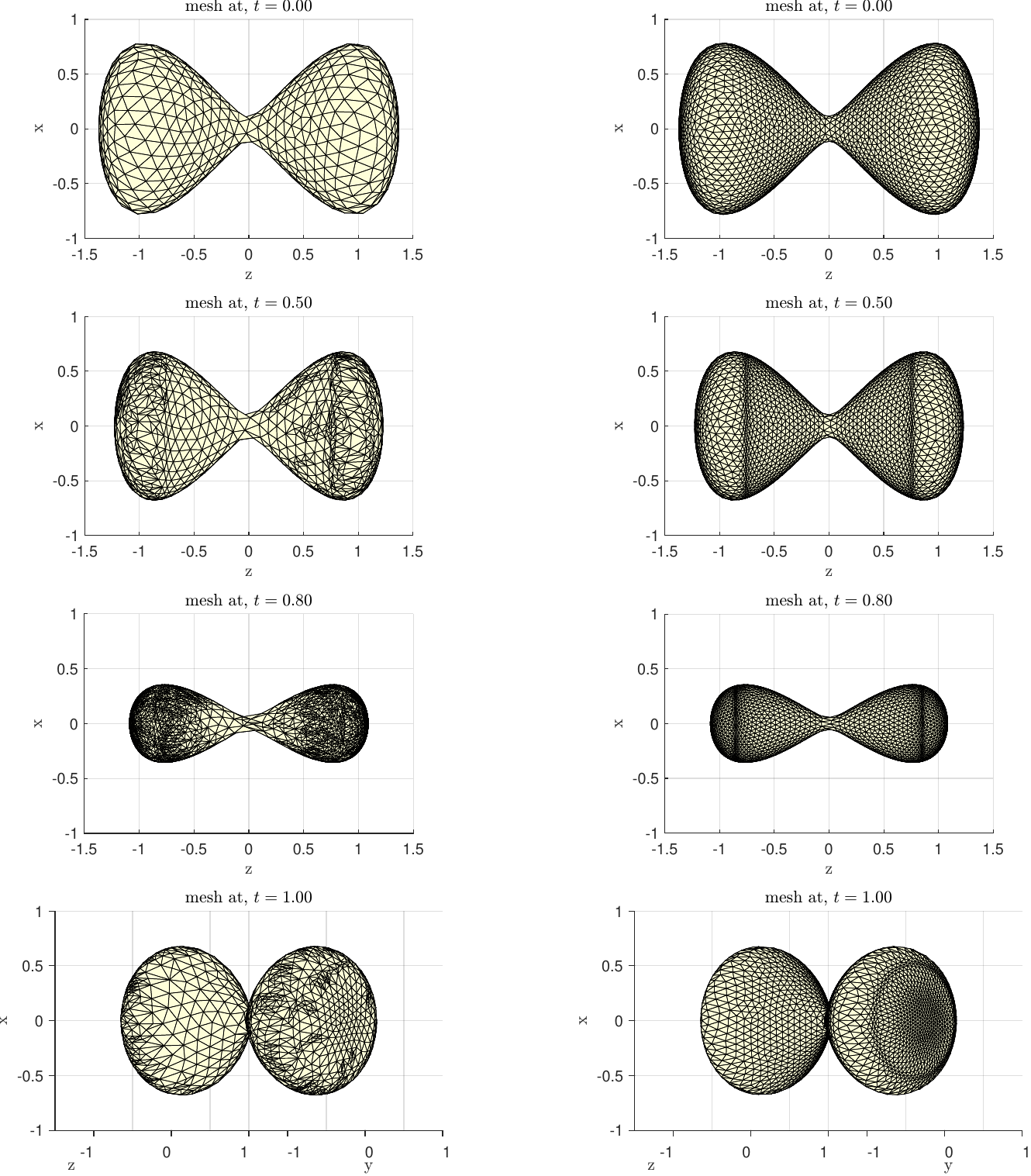}
    \caption{Dumbell experiment with (left) and without (right) adaptivity} 
    \label{fig:dumbell_adaptivity}
\end{figure}

We note that for the dumbell experiment, the mesh-transfer indicator $\eta_\textnormal{trans}$ exceeded its allocated budget, by at most a factor of $2$, in regions where the rapid evolution of the surface induced strong heat flow (in particular around $t =0.75$), which resulted in many refinements. According to the adaptive strategy outlined in the algorithm in the beginning of Section~\ref{sec:numerical_experiments}, this would trigger additional refinements of the previous timestep to reduce the transfer error. Since the present experiment is primarily intended to illustrate the improvement in mesh quality, and the budget is only moderately exceeded, we omit the mesh-correction step in this experiment.

\subsection{Reliability and Efficiency}
The next experiment heuristically analysis the reliability and efficiency of the estimator derived in \eqref{eq:indicator - full}. We determine the error indicator and the exact error for the non-adaptive approximation of Section~\ref{sec:ex1}, i.e.\ the bouncing ellipsoid with manufactured solution. Given a set of time-step sizes and initial meshes we observe the reliability and efficiency of the indicators in Figure~\ref{fig:reliability_eff}. As expected the estimator is only optimal with respect to the $L^2(H^1)$-error. Note that the mesh-width $h$ of Figure~\ref{fig:reliability_eff} is the maximal triangle diameter at time $t=0$ of the almost-uniform mesh generated using \cite{distmesh}.


\begin{figure}[htbp]
    \centering
		\resizebox{1\linewidth}{0.55\linewidth}{
%
%
\begin{tikzpicture}
\begin{axis}[%
width=8cm,
height=10cm,
at={(0cm,0cm)},
scale only axis,
xmode=log,
xmin=0.0347065388852302,
xmax=1.633,
xminorticks=true,
xlabel style={font=\color{white!15!black}},
xlabel={$h$},
ymode=log,
ymin=0.001,
ymax=1000,
yminorticks=true,
axis background/.style={fill=white},
title={\large{Estimator and $L^\infty(L^2)$  error for different $\tau$}},
legend style={at={(0.03,0.97)}, anchor=north west, legend cell align=left, align=left, draw=white!15!black,nodes={scale=0.75, transform shape}},
legend columns=2,
]
\addplot [color=white!40!black, mark=square, mark options={solid, white!40!black}]
  table[row sep=crcr]{%
1.633	0.0867254391772709\\
1.0108989431474	0.0599684742867465\\
0.541588514995021	0.0475916318684795\\
0.275912115651395	0.0839220523057472\\
0.138617572085068	0.0945512645755848\\
0.0693921836466527	0.0973168485050612\\
0.0347065388852302	0.0980156153328137\\
};
\addlegendentry{$L^\infty(L^2)$ error $(\tau = 1)$}

\addplot [color=white!70!black, mark=square, mark options={solid, white!70!black}]
  table[row sep=crcr]{%
1.633	14.1945786298292\\
1.0108989431474	10.5077250344713\\
0.541588514995021	9.26924966348745\\
0.275912115651395	7.92455603239824\\
0.138617572085068	7.0664540117227\\
0.0693921836466527	6.67740900168324\\
0.0347065388852302	6.50078628521261\\
};
\addlegendentry{estimator $\eta$ $(\tau = 1)$}

\addplot [color=white!40!black, mark=o, mark options={solid, white!40!black}]
  table[row sep=crcr]{%
1.633	0.36263200570435\\
1.0108989431474	0.349471777214253\\
0.541588514995021	0.150989524031221\\
0.275912115651395	0.0799009306773504\\
0.138617572085068	0.0599641902405679\\
0.0693921836466527	0.0548770576111311\\
0.0347065388852302	0.053601111536652\\
};
\addlegendentry{$L^\infty(L^2)$ error $(\tau = 0.1)$}

\addplot [color=white!70!black, mark=o, mark options={solid, white!70!black}]
  table[row sep=crcr]{%
1.633	6.88028586573875\\
1.0108989431474	4.83078435226165\\
0.541588514995021	3.63154328054793\\
0.275912115651395	2.26144436795475\\
0.138617572085068	1.5216948165132\\
0.0693921836466527	1.16197722922784\\
0.0347065388852302	0.98796305289336\\
};
\addlegendentry{estimator $\eta$ $(\tau = 0.1)$}

\addplot [color=white!40!black, mark=x, mark options={solid, white!40!black}]
  table[row sep=crcr]{%
1.633	0.37182659559317\\
1.0108989431474	0.343580759319753\\
0.541588514995021	0.118374638146261\\
0.275912115651395	0.0368264701625841\\
0.138617572085068	0.0136198097514683\\
0.0693921836466527	0.00768639836565636\\
0.0347065388852302	0.00622039609443846\\
};
\addlegendentry{$L^\infty(L^2)$ error $(\tau = 0.01)$}

\addplot [color=white!70!black, mark=x, mark options={solid, white!70!black}]
  table[row sep=crcr]{%
1.633	6.53078567968271\\
1.0108989431474	4.50128622099598\\
0.541588514995021	3.11588502159964\\
0.275912115651395	1.63049395566111\\
0.138617572085068	0.843222176852563\\
0.0693921836466527	0.458728414529415\\
0.0347065388852302	0.271681076128634\\
};
\addlegendentry{estimator $\eta$ $(\tau = 0.01)$}

\addplot [color=black, dotted]
  table[row sep=crcr]{%
1.633	16.33\\
1.0108989431474	10.108989431474\\
0.541588514995021	5.41588514995021\\
0.275912115651395	2.75912115651395\\
0.138617572085068	1.38617572085068\\
0.0693921836466527	0.693921836466527\\
0.0347065388852302	0.347065388852302\\
};
\addlegendentry{$\mathcal{O} (h)$}

\addplot [color=black, dashed]
  table[row sep=crcr]{%
1.633	8.000067\\
1.0108989431474	3.06575001976957\\
0.541588514995021	0.879954358723537\\
0.275912115651395	0.228382486689686\\
0.138617572085068	0.0576444938722774\\
0.0693921836466527	0.0144458254537523\\
0.0347065388852302	0.00361363152417598\\
};
\addlegendentry{$\mathcal{O} (h^2)$}

\end{axis}

\begin{axis}[%
width=8cm,
height=10cm,
at={(10cm,0cm)},
scale only axis,
xmode=log,
xmin=0.0347065388852302,
xmax=1.633,
xminorticks=true,
xlabel style={font=\color{white!15!black}},
xlabel={$h$},
ymode=log,
ymin=0.01,
ymax=100,
yminorticks=true,
axis background/.style={fill=white},
title={\large{Estimator and $L^\infty(L^2)$  error for different $\tau$}},
legend style={at={(0.03,0.97)}, anchor=north west, legend cell align=left, align=left, draw=white!15!black,nodes={scale=0.75, transform shape}},
legend columns=2,
]

\addplot [color=white!30!black, mark=square, mark options={solid, white!30!black}]
  table[row sep=crcr]{%
1.633	0.188909224859433\\
1.0108989431474	0.20411869752478\\
0.541588514995021	0.172478393180128\\
0.275912115651395	0.17922961830671\\
0.138617572085068	0.182661069537074\\
0.0693921836466527	0.183632813077133\\
0.0347065388852302	0.183882278589303\\
};
\addlegendentry{$L^2(H^1)$ error $(\tau = 1)$}

\addplot [color=white!70!black, mark=square, mark options={solid, white!70!black}]
  table[row sep=crcr]{%
1.633	14.1945786298292\\
1.0108989431474	10.5077250344713\\
0.541588514995021	9.26924966348745\\
0.275912115651395	7.92455603239824\\
0.138617572085068	7.0664540117227\\
0.0693921836466527	6.67740900168324\\
0.0347065388852302	6.50078628521261\\
};
\addlegendentry{estimator $\eta$ $(\tau = 1)$}

\addplot [color=white!30!black, mark=o, mark options={solid, white!30!black}]
  table[row sep=crcr]{%
1.633	0.541325357755142\\
1.0108989431474	0.610118964342365\\
0.541588514995021	0.33969250539036\\
0.275912115651395	0.188749525974572\\
0.138617572085068	0.119880181262861\\
0.0693921836466527	0.0946687846028215\\
0.0347065388852302	0.0871909756827554\\
};
\addlegendentry{$L^2(H^1)$ error $(\tau = 0.1)$}

\addplot [color=white!70!black, mark=o, mark options={solid, white!70!black}]
  table[row sep=crcr]{%
1.633	6.88028586573875\\
1.0108989431474	4.83078435226165\\
0.541588514995021	3.63154328054793\\
0.275912115651395	2.26144436795475\\
0.138617572085068	1.5216948165132\\
0.0693921836466527	1.16197722922784\\
0.0347065388852302	0.98796305289336\\
};
\addlegendentry{estimator $\eta$ $(\tau = 0.1)$}

\addplot [color=white!30!black, mark=x, mark options={solid, white!30!black}]
  table[row sep=crcr]{%
1.633	0.567929218254469\\
1.0108989431474	0.629435918547889\\
0.541588514995021	0.335128806945937\\
0.275912115651395	0.170284719991626\\
0.138617572085068	0.0858759399203111\\
0.0693921836466527	0.0437548106838108\\
0.0347065388852302	0.0233310191510394\\
};
\addlegendentry{$L^2(H^1)$ error $(\tau = 0.01)$}

\addplot [color=white!70!black, mark=x, mark options={solid, white!70!black}]
  table[row sep=crcr]{%
1.633	6.53078567968271\\
1.0108989431474	4.50128622099598\\
0.541588514995021	3.11588502159964\\
0.275912115651395	1.63049395566111\\
0.138617572085068	0.843222176852563\\
0.0693921836466527	0.458728414529415\\
0.0347065388852302	0.271681076128634\\
};
\addlegendentry{estimator $\eta$ $(\tau = 0.01)$}

\addplot [color=black, dotted]
  table[row sep=crcr]{%
1.633	1.633\\
1.0108989431474	1.0108989431474\\
0.541588514995021	0.541588514995021\\
0.275912115651395	0.275912115651395\\
0.138617572085068	0.138617572085068\\
0.0693921836466527	0.0693921836466527\\
0.0347065388852302	0.0347065388852302\\
};
\addlegendentry{$\mathcal{O} (h)$}

\end{axis}

\end{tikzpicture}%
}
        \caption{Efficiency and reliability analysis for the bouncing ellipsoid problem with $u = xy \exp(t)$.}
        \label{fig:reliability_eff}
\end{figure}



\subsection{Moving Peak: Coarsening and refinement}
We illustrate the coarsening and refinement on evolving surfaces by analysing a moving peak on a bouncing ellipsoid, following the non-evolving example of \cite{KL25}.
Based on the exact solution 
\begin{equation}
\label{eq:movingpeak}
u(x,y,z,t) = \Big(1-\exp\big(-200(t-0.5)^2\big)\Big)\exp\Big(-25\big((z-\cos(\pi t/T))^2 + (y-\sin(\pi t/T))^2 + x^2\big)\Big),
\end{equation}
which resembles a moving heat source travelling along the $yz$-plane, while vanishing briefly at $t= 0.5$. Again we computed the corresponding right-hand side based on \eqref{eq:heat_strong}. The parameter $T = 2$ is used to control that the peak moves a quarter revolution for $t \in [0,1]$. For both upcoming plots we chose $\TOL = 1$. For improved visualization of the adaptive meshes, we additionally scaled the temporal tolerances by $\theta_\tau = 0.3$ (i.e.\ in step (4) of the schematic algorithm of Section~\ref{sec:numerical_experiments}).
The movement of the surface, the refining, and the coarsening can be seen in Figure~\ref{fig:MeshesPeak}.
 It should be noted that the gradient of the peak is smaller when the surface expands, thus requiring fewer nodes, whereas the gradient increases when the surface shrinks, resulting in stronger refinement. 
 Around the midpoint $t =0.5$, where the peak temporarily disappears, the mesh is strongly coarsened. However, due to the dependence of information on two consecutive time-levels in the indicator, the comparatively large timesteps taking in this phase (see Figure~\ref{fig:ex5_adap_vs_nonadap}), and the restriction to single-level coarsening in our implementation, there is still a set of refined elements.
The dof over time, marked by black triangles $\blacktriangle$ in Figure~\ref{fig:ex5_adap_vs_nonadap} show a strong decrease in the dofs around the midpoint. However it is slightly delayed, mainly due to the restriction to single-level coarsening. Allowing for multiple-level coarsening is expected to further enhance this effect.

\begin{figure}[htbp]
    \centering
    \includegraphics[width=1\linewidth]{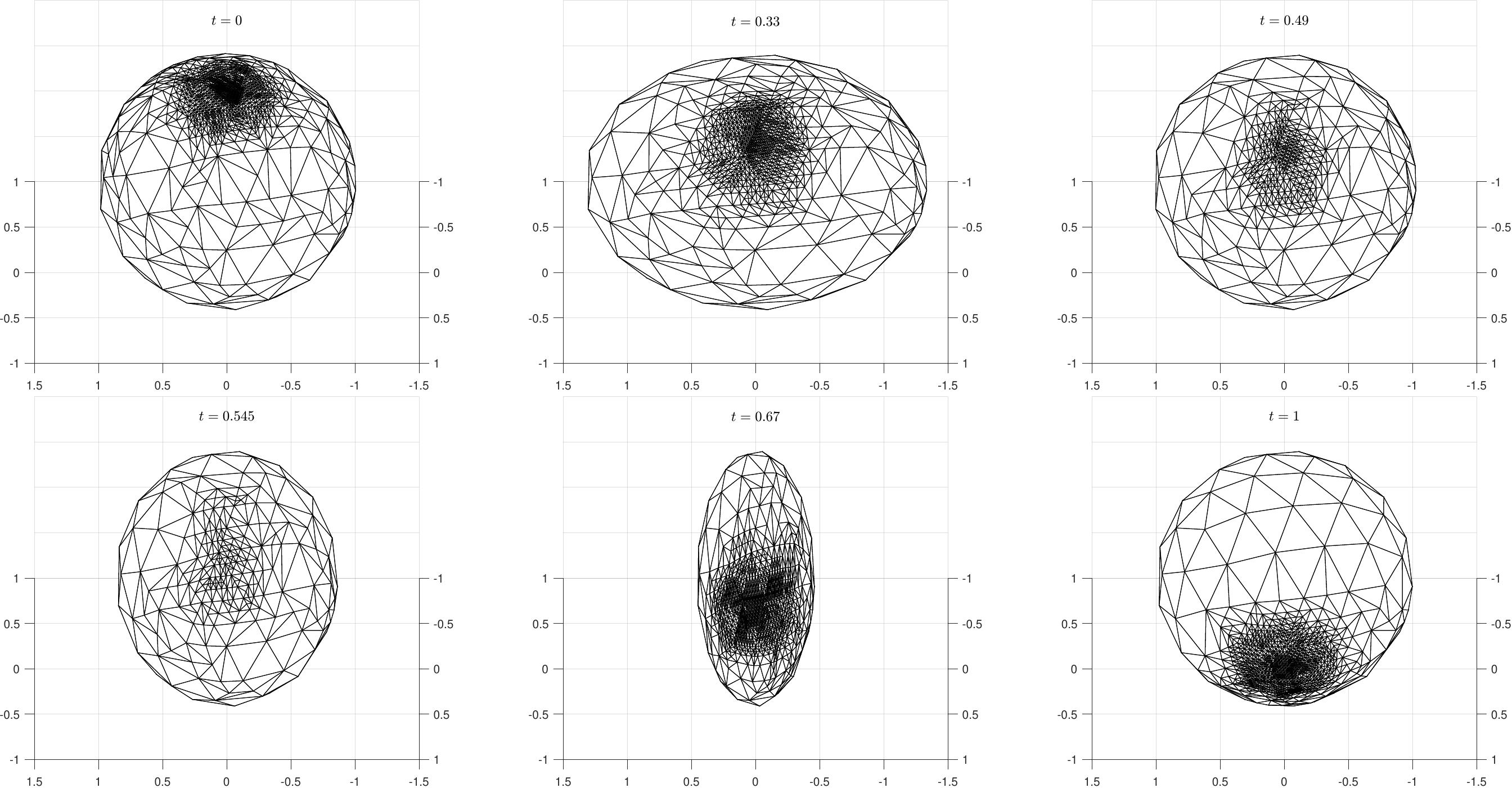} 
    \caption{Adaptively obtained meshes at different timesteps for the moving peak experiment ($\TOL = 1$).}
	\label{fig:MeshesPeak}
\end{figure}

We also computed the local in time $H^1$-error for an adaptive and a uniform discretization with roughly 2000 dofs and $\tau = 0.05$. The results are given over time in Figure~\ref{fig:ex5_adap_vs_nonadap}. We observe that adaptivity is highly beneficial for this example and the $H^1$-error is controlled and kept nearly constant, while managing, domain change, strong source-term change and geometric errors. 

\begin{figure}[htbp]
    \centering
		\includegraphics[width=1\textwidth]{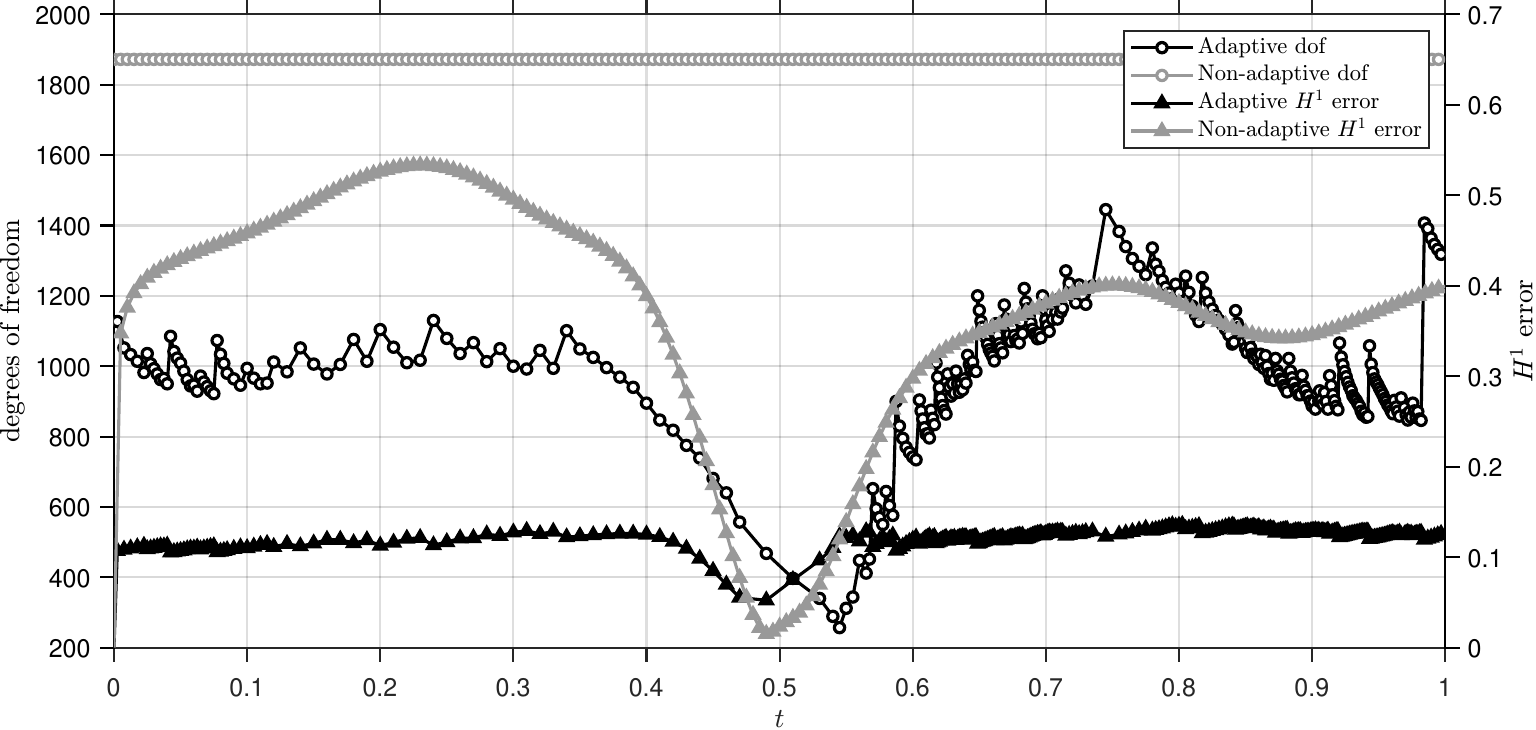}
        \caption{Double axis plot, with comparison of $H^1$-error (right $y$-axis) and degrees of freedom (left $y$-axis) over time for a uniform and adaptive solution ($\TOL = 1$) of the moving peak example \eqref{eq:movingpeak} on the bouncing ellipsoid. The markers in general indicate which discrete timesteps the uniform (in grey) and adaptive (in black) approximation, where computed. Additionally the type of marker indicates that the corresponding $y$-axis is the $L^2$-error for triangles $\blacktriangle$ and respectively the degrees of freedom for circles $\circ$.}
        \label{fig:ex5_adap_vs_nonadap}
\end{figure}

\clearpage

\bibliographystyle{abbrvnat}
\bibliography{aesfem_bib}


\end{document}